\DeclareMathOperator{\Autsh}{\underline{\textup{Aut}}}
\DeclareMathOperator{\Bi}{\textup{B}}
\DeclareMathOperator{\Coh}{\textup{Coh}}
\DeclareMathOperator{\GL}{\textup{GL}}
\DeclareMathOperator{\Hl}{\textup{H}}
\DeclareMathOperator{\Homsh}{\underline{\textup{Hom}}}
\DeclareMathOperator{\Imm}{\textup{Im}}
\DeclareMathOperator{\Isosh}{\underline{\textup{Iso}}}
\DeclareMathOperator{\QCoh}{\textup{QCoh}}
\DeclareMathOperator{\car}{\textup{char}}
\DeclareMathOperator{\et}{\textup{et}}
\DeclareMathOperator{\pr}{\textup{pr}}
\DeclareMathOperator{\rk}{\textup{rk}}
\theoremstyle{plain}
\newtheorem{thm}{Theorem}[section]
\newtheorem{lem}[thm]{Lemma}
\newtheorem{prop}[thm]{Proposition}
\theoremstyle{definition}
\newtheorem{defn}[thm]{Definition} 
\newtheorem{ex}[thm]{Example}
\newtheorem{rmk}[thm]{Remark}
\numberwithin{thm}{section}
\newcounter{x}\setcounter{x}{1}
\renewcommand{\et}{\textup{\'et}}
\newcommand{\red}{{\rm red}}
\newcommand{\EF}{{\rm EFin}}
\newcommand{\Ess}{{\rm EFin}}
\newcommand{\Hom}{{\rm Hom}}
\newcommand{\Spec}{{\rm Spec \,}}
\newcommand{\Aff}{{\rm Aff}}
\newcommand{\Ker}{{\rm Ker}}
\newcommand{\Aut}{{\rm Aut}}
\newcommand{\sC}{{\mathcal C}}
\newcommand{\sE}{{\mathcal E}}
\newcommand{\sG}{{\mathcal G}}
\newcommand{\sO}{{\mathcal O}}
\newcommand{\sP}{{\mathcal P}}
\newcommand{\sT}{{\mathcal T}}
\newcommand{\sU}{{\mathcal U}}
\newcommand{\sX}{{\mathcal X}}
\newcommand{\sY}{{\mathcal Y}}
\newcommand{\sZ}{{\mathcal Z}}
\newcommand{\A}{{\mathbb A}}
\newcommand{\C}{{\mathbb C}}
\newcommand{\E}{{\mathbb E}}
\newcommand{\F}{{\mathbb F}}
\newcommand{\G}{{\mathbb G}}
\newcommand{\N}{{\mathbb N}}
\newcommand{\Q}{{\mathbb Q}}
\newcommand{\Z}{{\mathbb Z}}
\newcommand{\NN}{\textup{N}}
\renewcommand{\SS}{\textup{S}}
\newcommand{\Mod}{\text{\sf Mod}}
\newcommand{\Vect}{\text{\sf Vect}}
\newcommand{\Rep}{\text{\sf Rep\,}}
\newcommand{\id}{{\rm id\hspace{.1ex}}}
\newcommand{\Ns}{\textup{Ns}}
\theoremstyle{plain}
\newtheorem{thmI}{Theorem}
\newtheorem{thmII}{Theorem}
\newtheorem{thmIII}{Theorem}
\newtheorem{thmIV}{Theorem}
\newtheorem{corI}{Corollary}
\begin{document}

\title[Essentially finite covers and towers of torsors]{Nori fundamental gerbe of 
essentially finite covers \\ and Galois closure of towers of torsors}

\author[M. Antei]{Marco Antei}

\address{
Universidad de Costa Rica, Ciudad universitaria Rodrigo Facio Brenes, Costa Rica}
\email{marco.antei@ucr.ac.cr}

\author[I. Biswas]{Indranil Biswas}

\address{
    School of Mathematics, Tata Institute of Fundamental
Research, Homi Bhabha Road, Bombay 400005, India }
\email{indranil@math.tifr.res.in}

\author[M. Emsalem]{Michel Emsalem}

\address{
    Laboratoire Paul Painlev\'e, U.F.R. de Math\'ematiques, Universit\'e des 
Sciences et des Technologies de Lille 1, 59 655 Villeneuve d'Ascq, France }
\email{emsalem@math.univ-lille1.fr}

\author[F. Tonini]{Fabio Tonini}

\address{
    Universit\'a degli Studi di Firenze\\
    Dipartimento di Matematica e Informatica 'Ulisse Dini'\\
    Viale Morgagni, 67/a\\ Firenze, 50134 Italy }
\email{fabio.tonini@unifi.it}

\author[L. Zhang]{Lei Zhang}

 \address{
    The Chinese University of Hong Kong\\
    Department of Mathematics\\    
    Shatin, New Territories\\ Hong Kong }
\email{lzhang@math.cuhk.edu.hk}

\thanks{This work is supported by the departmental fund of the Chinese
    University of Hong Kong and the Labex CEMPI 
(ANR-11-LABX-01). The second author is supported by  the J. C. Bose Fellowship}
\date{\today}

\subjclass[2010]{14F35, 14D23.}

\keywords{Nori fundamental gerbe, essentially finite bundle, essentially finite cover,
algebraic stack.}

\begin{abstract}
We prove the existence of a Galois closure for towers of torsors under finite group schemes over a proper, geometrically connected and geometrically reduced algebraic stack $X$ over a field $k$. This is done by describing the Nori fundamental gerbe of an essentially finite cover of $X$. A similar result is also obtained for the $\SS$-fundamental gerbe.
\end{abstract}

\global\long\def\A{\mathbb{A}}

\global\long\def\Ab{(\textup{Ab})}

\global\long\def\C{\mathbb{C}}

\global\long\def\Cat{(\textup{cat})}

\global\long\def\Di#1{\textup{D}(#1)}

\global\long\def\E{\mathcal{E}}

\global\long\def\F{\mathbb{F}}

\global\long\def\GCov{G\textup{-Cov}}

\global\long\def\Gcat{(\textup{Galois cat})}

\global\long\def\Gfsets#1{#1\textup{-fsets}}

\global\long\def\Gm{\mathbb{G}_{m}}

\global\long\def\GrCov#1{\textup{D}(#1)\textup{-Cov}}

\global\long\def\Grp{(\textup{Grps})}

\global\long\def\Gsets#1{(#1\textup{-sets})}

\global\long\def\HCov{H\textup{-Cov}}

\global\long\def\MCov{\textup{D}(M)\textup{-Cov}}

\global\long\def\MHilb{M\textup{-Hilb}}

\global\long\def\N{\mathbb{N}}

\global\long\def\PGor{\textup{PGor}}

\global\long\def\PGrp{(\textup{Profinite Grp})}

\global\long\def\PP{\mathbb{P}}

\global\long\def\Pj{\mathbb{P}}

\global\long\def\Q{\mathbb{Q}}

\global\long\def\RCov#1{#1\textup{-Cov}}

\global\long\def\RR{\mathbb{R}}

\global\long\def\Sch{\textup{Sch}}

\global\long\def\WW{\textup{W}}

\global\long\def\Z{\mathbb{Z}}

\global\long\def\acts{\curvearrowright}

\global\long\def\alA{\mathscr{A}}

\global\long\def\alB{\mathscr{B}}

\global\long\def\arr{\longrightarrow}

\global\long\def\arrdi#1{\xlongrightarrow{#1}}

\global\long\def\catC{\mathscr{C}}

\global\long\def\catD{\mathscr{D}}

\global\long\def\catF{\mathscr{F}}

\global\long\def\catG{\mathscr{G}}

\global\long\def\comma{,\ }

\global\long\def\covU{\mathcal{U}}

\global\long\def\covV{\mathcal{V}}

\global\long\def\covW{\mathcal{W}}

\global\long\def\duale#1{{#1}^{\vee}}

\global\long\def\fasc#1{\widetilde{#1}}

\global\long\def\fsets{(\textup{f-sets})}

\global\long\def\iL{r\mathscr{L}}

\global\long\def\id{\textup{id}}

\global\long\def\la{\langle}

\global\long\def\odi#1{\mathcal{O}_{#1}}

\global\long\def\ra{\rangle}

\global\long\def\set{(\textup{Sets})}

\global\long\def\sets{(\textup{Sets})}

\global\long\def\shA{\mathcal{A}}

\global\long\def\shB{\mathcal{B}}

\global\long\def\shC{\mathcal{C}}

\global\long\def\shD{\mathcal{D}}

\global\long\def\shE{\mathcal{E}}

\global\long\def\shF{\mathcal{F}}

\global\long\def\shG{\mathcal{G}}

\global\long\def\shH{\mathcal{H}}

\global\long\def\shI{\mathcal{I}}

\global\long\def\shJ{\mathcal{J}}

\global\long\def\shK{\mathcal{K}}

\global\long\def\shL{\mathcal{L}}

\global\long\def\shM{\mathcal{M}}

\global\long\def\shN{\mathcal{N}}

\global\long\def\shO{\mathcal{O}}

\global\long\def\shP{\mathcal{P}}

\global\long\def\shQ{\mathcal{Q}}

\global\long\def\shR{\mathcal{R}}

\global\long\def\shS{\mathcal{S}}

\global\long\def\shT{\mathcal{T}}

\global\long\def\shU{\mathcal{U}}

\global\long\def\shV{\mathcal{V}}

\global\long\def\shW{\mathcal{W}}

\global\long\def\shX{\mathcal{X}}

\global\long\def\shY{\mathcal{Y}}

\global\long\def\shZ{\mathcal{Z}}

\global\long\def\st{\ | \ }

\global\long\def\stA{\mathcal{A}}

\global\long\def\stB{\mathcal{B}}

\global\long\def\stC{\mathcal{C}}

\global\long\def\stD{\mathcal{D}}

\global\long\def\stE{\mathcal{E}}

\global\long\def\stF{\mathcal{F}}

\global\long\def\stG{\mathcal{G}}

\global\long\def\stH{\mathcal{H}}

\global\long\def\stI{\mathcal{I}}

\global\long\def\stJ{\mathcal{J}}

\global\long\def\stK{\mathcal{K}}

\global\long\def\stL{\mathcal{L}}

\global\long\def\stM{\mathcal{M}}

\global\long\def\stN{\mathcal{N}}

\global\long\def\stO{\mathcal{O}}

\global\long\def\stP{\mathcal{P}}

\global\long\def\stQ{\mathcal{Q}}

\global\long\def\stR{\mathcal{R}}

\global\long\def\stS{\mathcal{S}}

\global\long\def\stT{\mathcal{T}}

\global\long\def\stU{\mathcal{U}}

\global\long\def\stV{\mathcal{V}}

\global\long\def\stW{\mathcal{W}}

\global\long\def\stX{\mathcal{X}}

\global\long\def\stY{\mathcal{Y}}

\global\long\def\stZ{\mathcal{Z}}

\global\long\def\then{\ \Longrightarrow\ }

\global\long\def\L{\textup{L}}

\global\long\def\l{\textup{l}}

\setcounter{section}{0}
\maketitle


\section*{Introduction}

Let $K\subseteq L$ be a finite separable field extension. The extension is called Galois if all automorphisms of the algebraic closure $\bar{L}$ fixing $K$ send $L$ to $L$. In a more 
geometric language this means that $K\subseteq L$ is Galois if and only if
$\Spec L\arr \Spec K$ is a torsor under the automorphism group $\Aut_K(L)$. It
is not true that all finite separable extensions are Galois. Even worse,
it could happen that even for two successive Galois extensions $K\subseteq E$ and $E\subseteq L$ the tower $K\subseteq L$ is, although still finite separable, not Galois anymore. For example, $\Q\subseteq \Q[\sqrt{2}]\subseteq\Q[\sqrt[4]{2}]$ is a tower of Galois extensions, but itself is not Galois. Since  $L=K[\beta]$ for some $\beta\in L$, 
there is a field $L'$ inside a\textit{ chosen algebraic closure} $\bar{L}$ of
$L$, obtained from $L$ by adjoining all the roots of the minimal polynomial of $\beta$ to $L$. The extension $K\subseteq L'$ enjoys the following properties:
\begin{itemize}
    \item The extension $K\subseteq L'$ is Galois.
    \item For any Galois extension $K\subseteq M$ inside $\bar{L}$ with $M$ containing $L$, we have $L'\subseteq M$. 
\end{itemize}
The field $L'$, or more precisely the extension $K\subseteq L'$, is called the \textit{Galois closure} of $K\subseteq L$.  When
$K\subseteq L$ is a tower of two successive Galois extensions $K\subseteq E$ and $E\subseteq L$, then $L\subseteq L'$, $E\subseteq L'$ are also Galois extensions. If we denote $G\coloneqq \Aut_K(E)$, $H\coloneqq \Aut_E(L)$, then
the Galois closure $L'$ provides the following data ($\Diamond$):

\begin{itemize}     \item The automorphism group $\sG\coloneqq\Aut_K(L')$ and  group homomorphisms $\alpha\colon \sG\twoheadrightarrow G$ and $\Ker(\alpha)=\Aut_E(L')\twoheadrightarrow H$.
    \item A $\sG$-torsor, namely $\Spec L'\to \Spec K$, together with a factorization $\Spec L'\arr\Spec L$ such that $\Spec L'\to\Spec E$ is $\sG$-equivariant and $\Spec L'\arr\Spec L $ is $\Ker(\alpha)$-equivariant.
\end{itemize}

In the construction of the  \'etale fundamental group, Grothendieck completely generalized the Galois theory of fields to  that of schemes. In the world of schemes a "finite separable field extension" becomes a \textit{finite \'etale} morphism, and
a "Galois extension" becomes a \textit{connected} finite \'etale cover which is a torsor under its automorphism group, namely 
a  \textit{Galois cover}. Let $X$ be a connected scheme equipped with a geometric point $x\colon \Spec\bar{k}\arr X$, 
and let $f\colon Z\arr X$ be a connected surjective finite \'etale cover. Using Grothendieck's equivalence between the
category of finite \'etale covers of $X$ and the category of finite sets with a continuous $\pi_1^{\et}(X,x)$-action,
one can identify $f$ with a finite set with a transitive $\pi_1^\et(X,x)$-action or, after a choice of a geometric point $z$ in $Z$,
an open subgroup $\pi$ of $\pi_1^\et(X,x)$. Thus there is a pointed finite \'etale cover $f'\colon Z'\arr X$ mapping to $Z\arr X$ given by the maximal
normal subgroup of $\pi_1^{\et}(X,x)$ contained in $\pi$, and this pointed cover enjoys the following properties (\ding{73}):
\begin{itemize}
    \item The cover $f'\colon Z'\arr X$ is Galois.
    \item The pointed cover $(Z',z')$ with the map $(Z',z')\arr(Z,z)$ has the following universal property: if $(T,t)\arr(X,x)$ is
        another pointed Galois cover mapping to $(Z,z)$, then there exists a unique map $(T,t)\arr(Z',z')$ factorising $(Z',z')\arr(Z,z)$.
\end{itemize}
Note that since $f'$ corresponds to a normal subgroup of $\pi_1^\et(X,x)$ it is independent of the choice of the base points $z,x$.   The map $f'$ is the Galois closure  of $f$ (see also \cite[Proposition 5.3.9, pp. 169]{Sz} for a different approach).
If $f$ is moreover a tower of Galois covers $Z\arr Y$ and $Y\arr X$, then replacing the automorphisms of field extensions by the automorphisms of covers the Galois closure $Z'\arr X$ enjoys exactly the same properties ($\Diamond$) listed in the field case.

Now a natural question is: what about covers which may not be \'etale such as the Kummer covers in characteristic $p>0$? What would be a "Galois closure" in that case? The present paper
is set out to address these issues. As in the \'etale case we resort to the machinery of fundamental groups. Here the fundamental group is the Nori fundamental group $\pi^\NN(-,*)$ which was introduced in \cite{Nori} or its generalization - the
Nori fundamental gerbe $\Pi^\NN_-$ introduced in \cite{BV}. In Definition \ref{inflexible} we define the notion of the Nori fundamental gerbe.
Its existence is characterized in \ref{inflexible properties}, and its Tannakian property is in Theorem 
\ref{monodromy essentially finite}.  We assume the base $\sX$  to be \textit{inflexible} to guarantee the existence of the fundamental gerbe, and this "inflexible"  property plays the same role as the "connected"  property in the \'etale case. We assume $\sX$ to be \textit{pseudo-proper} (Definition \ref{pseudo-proper}) in order to use the Tannakian property. The next step is to understand
what are the "finite separable extensions" or the "finite \'etale morphisms" in
this context. For this, we introduce the notion of "essentially finite covers"
in Definition \ref{essentially finite cover}. The "Galois extensions" or the
"Galois covers" are replaced by "Nori-reduced torsors" (Definition
\ref{Nori-reduced}). Under this setting we obtain a very pretty Galois theory:

\begin{thmI}\label{main thm Nori gerbe of ess finite cover}
Let $\stX$ be a pseudo-proper and inflexible algebraic stack of finite type over $k$ and let $f\colon \stY\arr \stX$ be an
essentially finite cover. If $\car k > 0$ assume that either $f$ is \'etale or $\dim_k \Hl^1(\stX,E)<\infty$ for all vector bundles $E$ on $\stX$. Then 
\begin{enumerate}
    \item There exists a finite map $\Pi\arr \Pi^\NN_{\stX/k}$, which is unique up to equivalence,  whose base change along $\stX\arr \Pi^\NN_{\stX/k}$ is $\stY\arrdi f \stX$. Moreover,  $\stY$ is inflexible over $k$ if and only if $\Pi$ is a gerbe over $k$.
In this case $\stY\arr \Pi$ is the Nori fundamental gerbe of $\stY$, and there is a $2$-Cartesian diagram,
    \[
  \begin{tikzpicture}[xscale=1.8,yscale=-1.2]
    \node (A0_0) at (0, 0) {$\stY$};
    \node (A0_1) at (1, 0) {$\Pi^\NN_{\stY/k}$};
    \node (A1_0) at (0, 1) {$\stX$};
    \node (A1_1) at (1, 1) {$\Pi^\NN_{\stX/k}$};
    \path (A0_0) edge [->]node [auto] {$\scriptstyle{}$} (A0_1);
    \path (A1_0) edge [->]node [auto] {$\scriptstyle{}$} (A1_1);
    \path (A0_1) edge [->]node [auto] {$\scriptstyle{}$} (A1_1);
    \path (A0_0) edge [->]node [auto] {$\scriptstyle{f}$} (A1_0);
  \end{tikzpicture}
  \]
 and we have $\EF(\Vect(\stY))=\{V\in\Vect(\stY)\st f_*V\in\EF(\Vect(\stX))\}$,  where $\EF(\Vect(\sX))$ denotes the category of essentially finite vector bundles on $\sX$ (see Definition \ref{essentially finite}).
\item \textup{The \'etale case}.
If $f$ is \'etale then $\stY$ is inflexible over $k$ if and only if $\Hl^0(\stY,\odi\stY)=k$ and in this case there are $2$-Cartesian
diagrams  \[
  \begin{tikzpicture}[xscale=2,yscale=-1.2]
    \node (A0_0) at (0, 0) {$\stY$};
    \node (A0_1) at (1, 0) {$\Pi^\NN_{\stY/k}$};
    \node (A0_2) at (2, 0) {$\Pi^{\NN,\et}_{\stY/k}$};
    \node (A1_0) at (0, 1) {$\stX$};
    \node (A1_1) at (1, 1) {$\Pi^\NN_{\stX/k}$};
    \node (A1_2) at (2, 1) {$\Pi^{\NN,\et}_{\stX/k}$};
    \path (A0_0) edge [->]node [auto] {$\scriptstyle{}$} (A0_1);
    \path (A0_1) edge [->]node [auto] {$\scriptstyle{}$} (A0_2);
    \path (A1_0) edge [->]node [auto] {$\scriptstyle{}$} (A1_1);
    \path (A1_1) edge [->]node [auto] {$\scriptstyle{}$} (A1_2);
    \path (A0_2) edge [->]node [auto] {$\scriptstyle{}$} (A1_2);
    \path (A0_0) edge [->]node [auto] {$\scriptstyle{f}$} (A1_0);
    \path (A0_1) edge [->]node [auto] {$\scriptstyle{}$} (A1_1);
  \end{tikzpicture}
  \]
\item \textup{The torsor case}. 
If $f$ is a torsor under a finite group scheme $G$ over $k$ the following are equivalent: 

\begin{itemize}
\item $\stY$ is inflexible over $k$; 
\item $\Hl^0(\stY,\odi\stY)=k$; 
\item $f$ is Nori-reduced over $k$. 
\end{itemize}

Under these conditions there are $2$-Cartesian diagrams:
  \[
  \begin{tikzpicture}[xscale=2.0,yscale=-1.2]
    \node (A0_0) at (0, 0) {$\stY$};
    \node (A0_1) at (1, 0) {$\Pi^\NN_{\stY/k}$};
    \node (A0_2) at (2, 0) {$\Spec k$};
    \node (A1_0) at (0, 1) {$\stX$};
    \node (A1_1) at (1, 1) {$\Pi^\NN_{\stX/k}$};
    \node (A1_2) at (2, 1) {$\Bi G$};
    \path (A0_0) edge [->]node [auto] {$\scriptstyle{\beta}$} (A0_1);
    \path (A0_1) edge [->]node [auto] {$\scriptstyle{\pi}$} (A1_1);
    \path (A1_0) edge [->]node [auto] {$\scriptstyle{\alpha}$} (A1_1);
    \path (A1_1) edge [->>]node [auto] {$\scriptstyle{}$} (A1_2);
    \path (A0_2) edge [->]node [auto] {$\scriptstyle{}$} (A1_2);
    \path (A0_0) edge [->]node [auto] {$\scriptstyle{f}$} (A1_0);
    \path (A0_1) edge [->]node [auto] {$\scriptstyle{}$} (A0_2);
  \end{tikzpicture}
  \]\end{enumerate}
\end{thmI}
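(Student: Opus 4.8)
The plan is to deduce the whole statement from the Tannakian dictionary of Theorem~\ref{monodromy essentially finite}, which under the standing hypotheses identifies $\Vect(\Pi^\NN_{\stX/k})$ with $\EF(\Vect(\stX))$ compatibly with tensor products and with pullback along the canonical morphism $u\colon\stX\arr\Pi^\NN_{\stX/k}$; the same applies over $\stY$ once one notes that the hypotheses are inherited ($\stY$ is pseudo-proper because $f$ is finite, $\Hl^1(\stY,E)\cong\Hl^1(\stX,f_*E)$ is finite dimensional, and \'etaleness of $f$ is preserved). First I would \emph{construct} $\Pi$: since $f$ is an essentially finite cover, $\shB\coloneqq f_*\odi\stY$ is a commutative $\odi\stX$-algebra lying, as a sheaf, in $\EF(\Vect(\stX))$, so it corresponds to a finite locally free commutative algebra object $\shA$ of $\Vect(\Pi^\NN_{\stX/k})$; set $\Pi\coloneqq\Specsh_{\Pi^\NN_{\stX/k}}(\shA)$, which is finite over $\Pi^\NN_{\stX/k}$. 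Since $\Specsh$ commutes with base change and $u^*\shA\cong\shB$, this gives $\Pi\times_{\Pi^\NN_{\stX/k}}\stX\cong\Specsh_\stX(\shB)=\stY$, which is the asserted $2$-Cartesian square; uniqueness up to equivalence holds because a finite morphism to $\Pi^\NN_{\stX/k}$ is affine, hence $\Specsh$ of some algebra, which the full faithfulness in Theorem~\ref{monodromy essentially finite} recovers from its pullback $\shB$ to $\stX$.

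Next I would prove $\EF(\Vect(\stY))=\{V\in\Vect(\stY)\st f_*V\in\EF(\Vect(\stX))\}$ and that both coincide with the essential image of $\Vect(\Pi)$ in $\Vect(\stY)$. Because $f$ is affine, $\Vect(\stY)$ is the category of $\shB$-module objects of $\Vect(\stX)$, and transporting a $\shB=u^*\shA$-module structure through the fully faithful equivalence over $\stX$ (using also that $\pi\colon\Pi\arr\Pi^\NN_{\stX/k}$ is finite flat, so $\pi_*$ preserves vector bundles and satisfies flat base change) identifies $\{V\st f_*V\in\EF(\Vect(\stX))\}$ with $\Vect(\Pi)$ restricted to $\stY$. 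The inclusion ``$\supseteq$'' is then easy: for $f$ affine the counit $f^*f_*V\arr V$ is canonically split, so $V$ is a direct summand of $f^*f_*V$, which is essentially finite on $\stY$ as the pullback of an essentially finite bundle ($f^*$ preserves essential finiteness since $f$ is flat); hence $V$ is essentially finite. For ``$\subseteq$'' one writes an essentially finite $V$ as a subquotient of a finite direct sum of \emph{finite} bundles $W_j$ on $\stY$ and, since $f_*$ is exact, reduces to showing $f_*W\in\EF(\Vect(\stX))$ for every finite bundle $W$ on $\stY$: choosing an essentially finite torsor $p\colon\stY'\arr\stY$ with $p^*W$ trivial, the projection formula produces an exact sequence $0\arr W\arr W\otimes p_*\odi{\stY'}\arr W\otimes(p_*\odi{\stY'}/\odi\stY)\arr0$ of essentially finite bundles on $\stY$, whose image under $f_*$ realizes $f_*W$ as a subobject, with essentially finite quotient, of $f_*(W\otimes p_*\odi{\stY'})$, so that matters reduce to the stability of essentially finite covers under composition, i.e. to the statement that $f_*$ carries essentially finite bundles to essentially finite ones. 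I expect this to be the main obstacle: pushforward along a non-\'etale finite flat morphism does not commute with forming tensor powers, so a power-by-power argument is unavailable, and one must instead route the proof through torsors trivializing the finite bundles while keeping careful track of which sub- and quotient bundles of $\Vect(\stX)$ remain inside the Tannakian subcategory $\EF(\Vect(\stX))$. This is also exactly where, in characteristic $p$, the hypothesis that $f$ be \'etale or that $\dim_k\Hl^1(\stX,E)<\infty$ is genuinely needed, and it is the reason parts (1)--(3) have to be proved together rather than (2) and (3) deduced formally from (1).

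With the category description in hand I would establish the \emph{inflexibility dichotomy}. If $\Pi$ is a gerbe over $k$, then writing $\Pi^\NN_{\stX/k}=\varprojlim_i\Gamma_i$ with $\Gamma_i$ finite gerbes and $\Pi=\varprojlim_i(\Pi\times_{\Pi^\NN_{\stX/k}}\Gamma_i)$, a cofinality argument shows $\Pi$ is itself a profinite gerbe; by the previous step $\stY\arr\Pi$ induces an equivalence $\Vect(\Pi)\xrightarrow{\sim}\EF(\Vect(\stY))$, which is precisely the Tannakian characterization of the Nori fundamental gerbe recalled in~\ref{inflexible properties}, so $\stY$ is inflexible and $(\stY\arr\Pi)$ is $(\stY\arr\Pi^\NN_{\stY/k})$; the outer square of the Theorem is then the square constructed in the first step. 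Conversely, if $\stY$ is inflexible, then $\Pi^\NN_{\stY/k}$ exists, functoriality of the fundamental gerbe applied to $f$ gives $\Pi^\NN_{\stY/k}\arr\Pi^\NN_{\stX/k}$, the morphism $\stY\arr\Pi$ together with the universal property of $\Pi^\NN_{\stY/k}$ produces $\Pi^\NN_{\stY/k}\arr\Pi$ over $\Pi^\NN_{\stX/k}$, and the equality of vector-bundle categories over $\Vect(\Pi^\NN_{\stX/k})$ forces this morphism to identify the respective structure algebras, hence to be an isomorphism; in particular $\Pi$ is a gerbe.

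Finally, parts (2) and (3) are specializations of this dichotomy. When $f$ is \'etale, $\shB$ is an \'etale $\odi\stX$-algebra, so $\pi\colon\Pi\arr\Pi^\NN_{\stX/k}$ is finite \'etale and $\Pi$ is a gerbe if and only if it is connected, i.e. if and only if $\Hl^0(\Pi,\odi\Pi)=k$; since $\Hl^0(\Pi,\odi\Pi)=\Hl^0(\Pi^\NN_{\stX/k},\shA)=\Hl^0(\stX,\shB)=\Hl^0(\stY,\odi\stY)$, this is the stated criterion, and the compatibility with $\Pi^{\NN,\et}_{-}$ follows because forming the maximal pro-\'etale quotient commutes with the finite \'etale base change $\Pi\arr\Pi^\NN_{\stX/k}$ and because finite \'etale covers of $\stY$ coincide with those of $\Pi=\Pi^\NN_{\stY/k}$. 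When $f$ is a $G$-torsor, $\Bi G$ is a finite gerbe, so inflexibility of $\stX$ makes the classifying morphism $\stX\arr\Bi G$ factor as $\stX\arrdi{u}\Pi^\NN_{\stX/k}\arr\Bi G$, whence $\Pi\cong\Pi^\NN_{\stX/k}\times_{\Bi G}\Spec k$ (consistently with $\stY\cong\stX\times_{\Bi G}\Spec k$); being a $G$-torsor over $\Pi^\NN_{\stX/k}$, $\Pi$ is a gerbe exactly when $\Pi^\NN_{\stX/k}\arr\Bi G$ is an epimorphism, which by Definition~\ref{Nori-reduced} is the condition that $f$ be Nori-reduced, and the same epimorphism condition is equivalent to $\Hl^0(\stY,\odi\stY)=\Hl^0(\Pi,\odi\Pi)=k$. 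Together with the dichotomy of the previous paragraph this yields the equivalence of the three conditions and both displayed $2$-Cartesian squares.
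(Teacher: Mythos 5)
Your overall architecture (construct $\Pi$ as $\Specsh$ of the algebra corresponding to $f_*\odi\stY$ under $\Vect(\Pi^\NN_{\stX/k})\simeq\EF(\Vect(\stX))$, identify $\{V\st f_*V\in\EF(\Vect(\stX))\}$ with the essential image of $\Vect(\Pi)$, then run the inflexibility dichotomy) matches the paper's, which packages the construction into Lemma \ref{trivial section iff gerbe} and Lemma \ref{vector bundles of an extension}. But there is a genuine gap at exactly the point you flag as ``the main obstacle'': the inclusion $\EF(\Vect(\stY))\subseteq\{V\st f_*V\in\EF(\Vect(\stX))\}$, equivalently the statement that $f_*\phi^*W$ is essentially finite for every map $\phi\colon\stY\arr\Phi$ to a finite stack and every $W\in\Vect(\Phi)$, is never proved. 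Your reduction is moreover circular as written: you reduce the claim that $f_*$ preserves essential finiteness ``to the statement that $f_*$ carries essentially finite bundles to essentially finite ones.'' This is the entire technical content of the theorem, and it is where all the hypotheses enter. The paper proves it in Lemmas \ref{push of ess finite is still ess finite} and \ref{push of ess finite for etale}: after base-changing along a point of $\stY$ and along a finite atlas of $\Phi$ one reduces to $\stY=\stX\times\Spec A$ with $A$ Artinian local and $i^*\phi^*W$ free; Lemma \ref{decomposition series lemma} then filters $f_*\phi^*W$ with free graded pieces; Lemma \ref{extensions of O} (this is where $\dim_k\Hl^1(\stX,E)<\infty$ and $\car k>0$ are used) produces a surjective cover trivializing $f_*\phi^*W$; and Theorem \ref{key thm} from \cite{TZ2} converts trivialization by a cover into essential finiteness. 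None of this machinery, nor a substitute for it, appears in your proposal. A consequence is that your inflexibility dichotomy is also incomplete: to show $\stY$ is inflexible with Nori gerbe $\Pi$ you must verify the universal property for maps $\stY\arr\Phi$ to finite stacks, which (via Remark \ref{Tannakian reconstruction}) requires precisely that every $\phi^*W$ lie in $\shC$ --- again the missing lemma. An equivalence $\Vect(\Pi)\simeq\EF(\Vect(\stY))$ alone is not the definition of the Nori gerbe.

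Two smaller but real problems. First, your ``$\supseteq$'' argument asserts that the counit $f^*f_*V\arr V$ is canonically split; it is surjective and locally split (the target is locally projective) but has no canonical global splitting, and even granting a splitting you would need $\EF(\Vect(\stY))$ to be closed under direct summands, which you cannot invoke before knowing $\stY$ is inflexible. The paper gets this direction for free from the fact that $\shC$ is the essential image of $\Vect(\Pi)$ with $\Pi$ a profinite gerbe (faithful over $\Pi^\NN_{\stX/k}$), so its objects are pullbacks from finite gerbes and hence essentially finite by Remark \ref{pullback from finite stack}. Second, in the \'etale case the criterion ``$\Pi$ connected $\Rightarrow$ $\Pi$ a gerbe'' needs an argument; the paper supplies it by showing $\Delta$ is geometrically connected and geometrically reduced over the finite gerbe $\Gamma$ and citing \cite[Proposition 4.3]{BV}.
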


For pointed covers Theorem \ref{main thm Nori gerbe of ess finite cover}  yields the following Galois correspondence.
\begin{corI}\label{main thm Nori gerbe pointed case}
 Let $\stX$ be a pseudo-proper and inflexible algebraic stack of finite type over $k$ with a rational point $x\in \stX(k)$. If $\car k > 0$ assume that $\dim_k \Hl^1(\stX,E)<\infty$ for all vector bundles $E$ on $\stX$. Then there is an equivalence of categories
   \[
  \begin{tikzpicture}[xscale=8.2,yscale=-0.9]
    \node (A0_0) at (0, 0) {$\left\{\begin{array}[c]{c}
 \text{Pointed essentially finite covers} \\
 (\stY,y)\arr (\stX,x) \text{ with } \stY \text{ inflexible}
\end{array}
\right\}$};
    \node (A0_1) at (1, 0) {$\left\{\begin{array}[c]{c}
 \text{Subgroups } H < \pi^\NN(\stX,x)\\
 \text{of } \text{ finite index}
\end{array}
\right\}$};
    \node (A1_0) at (0, 1) {$(\stY,y)\arr(\stX,x)$};
    \node (A1_1) at (1, 1) {$\pi^\NN(\stY,y)$};
    \path (A0_0) edge [->]node [auto] {$\scriptstyle{}$} (A0_1);
    \path (A1_0) edge [|->,gray]node [auto] {$\scriptstyle{}$} (A1_1);
  \end{tikzpicture}
  \]
  where in the right-hand side we consider inclusions as arrows. Moreover
  \begin{itemize}
   \item an essentially finite cover $(\stY,y)\arr (\stX,x)$ with $\stY$ inflexible is a torsor under a finite group $G$ if and only if $\pi^\NN(\stY,y)$ is a normal subgroup of $\pi^\NN(\stX,x)$ and in this case there is an exact sequence
   \[
1\arr \pi^\NN(\stY,y)\arr\pi^\NN(\stX,x)\arr G\arr 1
\]
 \item an essentially finite cover $(\stY,y)\arr (\stX,x)$ with $\stY$ inflexible is \'etale if and only if the finite scheme $\pi^\NN(\stX,x)/\pi^\NN(\stY,y)$ is \'etale over $k$.
  \end{itemize}
\end{corI}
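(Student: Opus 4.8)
The plan is to deduce this from Theorem~\ref{main thm Nori gerbe of ess finite cover} by neutralising the Nori fundamental gerbe at $x$. The point $x\in\stX(k)$ gives an equivalence $\Pi^\NN_{\stX/k}\simeq\Bi\Gamma$ with $\Gamma:=\pi^\NN(\stX,x)$ a profinite group scheme over $k$, sending $x$ to the canonical atlas $\Spec k\arr\Bi\Gamma$. Given a pointed essentially finite cover $(\stY,y)\arr(\stX,x)$ with $\stY$ inflexible, $y$ neutralises $\Pi^\NN_{\stY/k}\simeq\Bi\,\pi^\NN(\stY,y)$, and by Theorem~\ref{main thm Nori gerbe of ess finite cover}(1) the finite morphism $\Pi^\NN_{\stY/k}\arr\Pi^\NN_{\stX/k}$ becomes a morphism of classifying stacks respecting atlas points, hence equals $\Bi j$ for a homomorphism $j\colon\pi^\NN(\stY,y)\arr\Gamma$. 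As $\Bi j$ is finite, in particular representable, $j$ must be a monomorphism — its kernel is trivial, for otherwise the fibre of $\Bi j$ over the atlas point would be a nontrivial gerbe — so $j$ is a closed immersion of group schemes and the fibre $\Gamma/j(\pi^\NN(\stY,y))$ is finite; thus $(\stY,y)\mapsto\pi^\NN(\stY,y)$ is a well-defined functor into the finite-index (closed) subgroup schemes of $\Gamma$, compatible with inclusions.

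For the quasi-inverse, to a finite-index subgroup $H<\Gamma$ I would attach $\stY_H:=\stX\times_{\Pi^\NN_{\stX/k}}\Bi H$, the base change of the finite morphism $\Bi H\arr\Bi\Gamma=\Pi^\NN_{\stX/k}$. It is a finite cover, and it is essentially finite because the pushforward of $\odi{\stY_H}$ to $\stX$ is the vector bundle associated with the finite-dimensional $\Gamma$-representation $\odi{\Gamma/H}$, hence essentially finite by the Tannakian equivalence of Theorem~\ref{monodromy essentially finite}. The atlas point lifts canonically to $\Bi H$, giving $y_H\in\stY_H(k)$ over $x$. Applying Theorem~\ref{main thm Nori gerbe of ess finite cover}(1) to $\stY_H\arr\stX$ and using the uniqueness of the finite morphism it produces, one identifies that morphism with $\Bi H\arr\Pi^\NN_{\stX/k}$; since $\Bi H$ is a gerbe over $k$, $\stY_H$ is inflexible and $\stY_H\arr\Bi H$ is its Nori fundamental gerbe, so neutralising at $y_H$ gives $\pi^\NN(\stY_H,y_H)\simeq H$ over $\Gamma$. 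Conversely, for $(\stY,y)$ as above Theorem~\ref{main thm Nori gerbe of ess finite cover}(1) yields $\stX\times_{\Pi^\NN_{\stX/k}}\Pi^\NN_{\stY/k}\simeq\stY$, i.e. $\stY_{\pi^\NN(\stY,y)}\simeq\stY$ over $(\stX,x)$. This proves essential surjectivity.

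For full faithfulness, a morphism $(\stY_1,y_1)\arr(\stY_2,y_2)$ over $(\stX,x)$ induces by functoriality of the Nori fundamental gerbe a homomorphism $\pi^\NN(\stY_1,y_1)\arr\pi^\NN(\stY_2,y_2)$ compatible with the two inclusions into $\Gamma$, hence it is an inclusion, so such a morphism exists only when $\pi^\NN(\stY_1,y_1)\subseteq\pi^\NN(\stY_2,y_2)$; and then base-changing $\Bi H_1\arr\Bi H_2$ over $\stX$ produces one. For uniqueness, by the $2$-fibre product a morphism $\stY_{H_1}\arr\stY_{H_2}$ over $\stX$ is a morphism $\stY_{H_1}\arr\Bi H_2$ over $\Bi\Gamma$; writing $\Bi H_2$ as the cofiltered limit of the finite gerbes $\Bi H_{2,i}$ over the finite quotients $H_{2,i}$ of $H_2$, the universal property of $\Pi^\NN_{\stY_{H_1}/k}=\Bi H_1$ identifies $\Hom(\stY_{H_1},\Bi H_2)$ with $\Hom(\Bi H_1,\Bi H_2)$, which over $\Bi\Gamma$ with the atlas points is a singleton or empty. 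Hence both categories have at most one arrow between two objects, governed by inclusion, and the functor is an equivalence.

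Finally the two refinements. For the torsor case I would use Theorem~\ref{main thm Nori gerbe of ess finite cover}(3): if $f$ is a $G$-torsor, neutralising the $2$-Cartesian square there makes $\Pi^\NN_{\stX/k}\surj\Bi G$ into $\Bi q$ for a faithfully flat $q\colon\Gamma\surj G$ with $\pi^\NN(\stY,y)\simeq\Ker q$, which is normal in $\Gamma$ and has $\Gamma/\pi^\NN(\stY,y)\simeq G$; conversely, if $H=\pi^\NN(\stY,y)$ is normal of finite index then $G:=\Gamma/H$ is a finite group scheme, $\Bi H\arr\Bi\Gamma$ is the fibre of $\Bi\Gamma\arr\Bi G$ over the atlas point $\Spec k\arr\Bi G$, so $\stY_H=\stX\times_{\Bi G}\Spec k$ is the $G$-torsor classified by $\stX\arr\Bi\Gamma\arr\Bi G$, and $1\arr\pi^\NN(\stY,y)\arr\pi^\NN(\stX,x)\arr G\arr1$ is exact. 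For the \'etale case, pulling $\stY=\stX\times_{\Bi\Gamma}\Bi H$ back along $x$ gives $x^*\stY=\Spec k\times_{\Bi\Gamma}\Bi H\simeq\Gamma/H$ (using that $x$ maps to the atlas point); if $f$ is \'etale so is its fibre $\Gamma/H$ over $k$, and conversely if $\Gamma/H$ is \'etale over $k$ then $\Bi H\arr\Bi\Gamma$, hence $f$, is finite and flat (a finite $k$-scheme being flat), while the relative differentials of $f$ are pulled back from those of $\Bi H\arr\Bi\Gamma$, which vanish because their restriction along the fppf atlas $\Spec k\arr\Bi\Gamma$ is $\Omega^1_{(\Gamma/H)/k}=0$; so $f$ is finite, flat and unramified, i.e. \'etale. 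The step I expect to be the real work is not any isolated estimate but the clean formulation, and the compatibility across $\stX$ and $\stY$, of the dictionary used throughout — that a finite (equivalently representable) morphism of pointed classifying stacks over $k$ is exactly $\Bi H'\arr\Bi\Gamma$ for a closed immersion $H'\inj\Gamma$ of finite index — together with matching the neutralisations to the base-change and uniqueness statements of Theorem~\ref{main thm Nori gerbe of ess finite cover}; granting this, the equivalence, its functoriality, and the two refinements are formal.
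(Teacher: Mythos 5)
Your proposal is correct and follows essentially the same route as the paper: both neutralise the fundamental gerbes at $x$ and $y$, identify the finite morphism $\Pi^\NN_{\stY}\arr\Pi^\NN_{\stX}$ of Theorem \ref{main thm Nori gerbe of ess finite cover}(1) with $\Bi H\arr \Bi\pi^\NN(\stX,x)$ for a closed subgroup $H$ of finite index, and settle the torsor and \'etale refinements via Lemma \ref{key product gerbes} and base change to the fibre over $x$. The only cosmetic difference is that you build the quasi-inverse $H\mapsto \stX\times_{\Pi^\NN_{\stX}}\Bi H$ by hand, where the paper instead appeals to Proposition \ref{equiv for essentially finite covers} together with the description $\Pi=[\stY_x/\pi^\NN(\stX,x)]$ and the faithful flatness of the orbit map at $y$.
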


The above sequence was already proved to be exact in \cite[Theorem 2.9]{EHS} under the assumption that $G$ is \'etale. 
The main difficulty in the present work is to show that, if $f\colon \stY\arr \stX$ is an essentially finite cover, then $f_*$ preserves essentially finite vector bundles. The statement would be false without assuming that $\sX$ is pseudo-proper and $\dim_k \Hl^1(\stX,E)<\infty$ (see Lemmas \ref{push of ess finite is still ess finite}, \ref{push of ess finite for etale}
and, for a counter-example, Example \ref{counterexample no H1 exact sequence}).
A key tool in the proof is a characterization of essentially finite vector bundles given in \cite{TZ2} which generalizes previous results of \cite{BDS} and \cite{AM}.\\

Here is our main result on finding Galois closures for covers which may not be \'etale:

\begin{thmII}\label{closure of an essentially finite cover}
Let $\stX$ be a pseudo-proper and inflexible fibered category over $k$
and $f\colon\stY\arr\stX$ an essentially finite cover with a rational point
$y\in \stY(k)$. Denote by $\Gamma$ the monodromy gerbe of
$f_*\odi\stY$ in $\EF(\Vect(\stX))$ and by $\Delta\arr \Gamma$ the cover
in \eqref{eq1} of Lemma \ref{trivial section iff gerbe} that extends $f$. Then

\begin{enumerate}
    
    \item There is a diagram with Cartesian squares
   \[
  \begin{tikzpicture}[xscale=2.0,yscale=-1.2]
    \node (A0_0) at (0, 0) {$\Spec k$};
    \node (A0_1) at (1, 0) {$\stP_f$};
    \node (A0_2) at (2, 0) {$\Spec k$};
    \node (A1_1) at (1, 1) {$\stY$};
    \node (A1_2) at (2, 1) {$\Delta$};
    \node (A2_1) at (1, 2) {$\stX$};
    \node (A2_2) at (2, 2) {$\Gamma$};
    \path (A0_0) edge [->]node [auto] {$\scriptstyle{p}$} (A0_1);
    \path (A0_1) edge [->]node [auto] {$\scriptstyle{\lambda}$} (A1_1);
    \path (A0_1) edge [->]node [auto] {$\scriptstyle{}$} (A0_2);
    \path (A1_1) edge [->]node [auto] {$\scriptstyle{u'}$} (A1_2);
    \path (A0_2) edge [->]node [auto] {$\scriptstyle{}$} (A1_2);
    \path (A0_0) edge [->,bend left=10]node [auto] {$\scriptstyle{y}$} (A1_1);
    \path (A0_0) edge [->,bend left=20]node [auto] {$\scriptstyle{x}$} (A2_1);
    \path (A1_1) edge [->]node [auto] {$\scriptstyle{f}$} (A2_1);
    \path (A2_1) edge [->]node [auto] {$\scriptstyle{u}$} (A2_2);
    \path (A1_2) edge [->]node [auto] {$\scriptstyle{}$} (A2_2);
  \end{tikzpicture}
  \]
such that the map $\pi\colon \stP_f\arr \stX$ is a pointed Nori-reduced torsor under the
finite group scheme $G_f=\Autsh_\Gamma(u(x))$, and $\Gamma$ is the monodromy gerbe
of $\pi_*\odi{\stP_f}$. 

\item The map $\lambda\colon \stP_f\arr \stY$ is faithfully flat if and only if $\Delta$ is a gerbe (e.g. when $\sY$ is inflexible) in 
which case it is a Nori-reduced torsor for a subgroup scheme of $G_f$.

\item In the general case $\lambda \colon \stP_f \arr \stY$ factors as $\eta \circ \lambda '$, where 
$\lambda '\colon \stP_f \arr \stY '$ is faithfully flat and a Nori-reduced torsor for a finite 
subgroup scheme of $G_f$, and $\eta \colon \stY ' \arr \stY$ is a closed immersion.

\item Finally the torsor $\pi\colon \stP_f\arr \stX$ has the following universal property: for any 
pointed Nori-reduced torsor $g \colon (\stT,t) \arr (\stX,x)$ for a finite group scheme $G$, 
and any pointed faithfully flat $\stX$-morphism $h: (\stT,t) \to (\stY,y)$, there is a unique 
factorization $h=\lambda \circ j$, where $j\colon (\stT,t) \arr (\stP _f,p)$ is equivariant with 
respect to a surjective homomorphism $G\arr G_f$.\end{enumerate}
\end{thmII}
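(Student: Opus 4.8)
The strategy is to transport everything to the gerbe $\Gamma$ and the cover $\Delta\to\Gamma$ of Lemma~\ref{trivial section iff gerbe}, exploiting that the rational point $y$ forces $\stX$ (through $x=f(y)$) and hence $\Gamma$ (through $u(x)$) to have rational points. Since $\Gamma$ is the monodromy gerbe of the essentially finite bundle $f_*\odi\stY$ it is an algebraic $k$-gerbe banded by a finite group scheme, so the rational point $u(x)$ identifies $\Gamma\cong\Bi G_f$ with $G_f=\Autsh_\Gamma(u(x))$. I would then set $\stP_f:=\stX\times_{\Gamma,u(x)}\Spec k$; thus $\pi\colon\stP_f\to\stX$ is the $G_f$-torsor classified by $u\colon\stX\to\Gamma=\Bi G_f$, pointed by the section $p$ of $\pi^{-1}(x)$ coming from the trivialisation of $u(x)$, and since $\stY=\stX\times_\Gamma\Delta$ and $u(x)$ lifts to $u'(y)\in\Delta(k)$ we get $\stP_f\cong\stY\times_{\Delta,u'(y)}\Spec k$, which produces the displayed diagram with all squares $2$-Cartesian and $\lambda$ the projection. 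That $\pi$ is Nori-reduced and that $\Gamma$ is the monodromy gerbe of $\pi_*\odi{\stP_f}$ is then Tannakian: via $u$, $\pi_*\odi{\stP_f}$ is the regular representation $k[G_f]$, which generates $\Rep(G_f)=\Rep(\Gamma)$, and $\Hl^0(\stP_f,\odi{\stP_f})=k$ because $\Pi^\NN_{\stX/k}\to\Gamma$ is faithfully flat by definition of the monodromy gerbe. This gives (1).

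\emph{Parts (3) and (2).} Put $D:=\Delta\times_{\Gamma}\Spec k$, a finite $k$-scheme with $G_f$-action; then $\Delta=[D/G_f]$ and $u'(y)$ corresponds to a point $d_0\in D(k)$ whose attached $G_f$-torsor is trivial (it lies over $u(x)$). Let $\Delta'\hookrightarrow\Delta$ be the scheme-theoretic image of $u'(y)$. A local computation shows $\Delta'=[G_f d_0/G_f]\cong\Bi H$ with $H:=\Stab_{G_f}(d_0)\le G_f$ a finite subgroup scheme, and that $\Spec k\to\Delta'$ is the trivial $H$-torsor, hence faithfully flat. Base-changing $\Delta'\hookrightarrow\Delta$ and $\Spec k\to\Delta'$ along $\stX\to\Gamma$ gives a closed immersion $\eta\colon\stY':=\stX\times_\Gamma\Delta'\to\stY$ and an $H$-torsor $\lambda'\colon\stP_f\to\stY'$ with $\lambda=\eta\circ\lambda'$; $\lambda'$ is Nori-reduced because $\Hl^0(\stP_f,\odi{\stP_f})=k=\Hl^0(\stY',\odi{\stY'})$ (the latter since the corresponding $G_f$-representation $k[G_f/H]$ has one-dimensional invariants). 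This is (3). For (2): the map $\stX\to\Gamma=\Bi G_f$ is faithfully flat (after the faithfully flat base change along the atlas $\Spec k\to\Bi G_f$ it becomes $\stP_f\to\Spec k$), hence so is $\stY\to\Delta$; so if $\lambda$ is faithfully flat, then descending along $\stY\to\Delta$ so is $\Spec k\to\Delta$, i.e. $\Delta'=\Delta$, i.e. $\Delta$ is a gerbe; conversely, if $\Delta$ is a gerbe then $\Delta'=\Delta$, $\stY'=\stY$ and $\lambda=\lambda'$ is the Nori-reduced $H$-torsor just built. (That $\stY$ inflexible implies $\Delta$ a gerbe is Lemma~\ref{trivial section iff gerbe}.)

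\emph{Part (4).} Let $g\colon(\stT,t)\to(\stX,x)$ be a pointed Nori-reduced $G$-torsor and $h\colon(\stT,t)\to(\stY,y)$ a pointed faithfully flat $\stX$-morphism. Faithful flatness of $h$ gives $\odi\stY\hookrightarrow h_*\odi\stT$ (the diagonal splits $\stT\times_\stY\stT\to\stT$), hence, applying the exact functor $f_*$, an inclusion $f_*\odi\stY\hookrightarrow g_*\odi\stT$ in $\EF(\Vect(\stX))=\Rep(\Pi^\NN_{\stX/k})$. Now $g_*\odi\stT$ is the regular representation of $G$, whose monodromy gerbe is $\Bi G$ because $\stT$ is Nori-reduced, while $f_*\odi\stY$ has monodromy gerbe $\Gamma$; functoriality of monodromy gerbes under subobjects, compatibly with the projections to $\Pi^\NN_{\stX/k}$, therefore yields a faithfully flat, rational-point-preserving $\beta\colon\Bi G\to\Gamma$ with $\beta\circ v=u$, where $v\colon\stX\to\Bi G$ classifies $\stT$ --- equivalently a surjective homomorphism $q\colon G\twoheadrightarrow G_f$ with $\beta=\Bi q$. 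Since $u=(\Bi q)\circ v$, the torsor $\stP_f\to\stX$ is the pushout of $\stT\to\stX$ along $q$, so there is a canonical $q$-equivariant pointed morphism $j\colon(\stT,t)\to(\stP_f,p)$. To see $\lambda\circ j=h$, note both are $\stX$-morphisms $\stT\to\stY$, i.e. sections of $\stY\times_\stX\stT\to\stT$; using $\beta\circ v=u$ this last map becomes the trivial bundle $\stT\times_k D\to\stT$, and since $\Hl^0(\stT,\odi\stT)=k$ every section of it is constant, hence determined by its value at $t$ --- and $\lambda(j(t))=\lambda(p)=y=h(t)$. Uniqueness of $j$ is clear, since a $q$-equivariant $\stX$-morphism $\stT\to\stP_f$ is determined by its value at $t$, required to be $p$. (The existence of such an $h$ already forces $\Delta'=\Delta$, consistently with (2).)

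\emph{Main obstacle.} The one substantial ingredient, used throughout and decisively in (4), is the Tannakian package: that $\EF(\Vect(\stX))$ is Tannakian with fundamental gerbe $\Pi^\NN_{\stX/k}$ (here pseudo-properness and inflexibility of $\stX$ enter), that monodromy gerbes of essentially finite bundles are algebraic and banded by finite group schemes, and --- the delicate point --- that forming monodromy gerbes is functorial for subobjects compatibly with the projections to $\Pi^\NN_{\stX/k}$; this is exactly what produces the surjection $q\colon G\twoheadrightarrow G_f$. The second point needing care is the orbit computation $\Delta'\cong\Bi H$ and the identification of $\Spec k\to\Delta'$ as the trivial $H$-torsor in (3), for which one uses the explicit presentations $\Gamma=\Bi G_f$ and $\Delta=[D/G_f]$. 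The remaining verifications are bookkeeping with the $2$-Cartesian squares.
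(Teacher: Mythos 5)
Your proof is correct and follows essentially the same route as the paper's: $\stP_f$ is the fibre product of $\stX\arr\Gamma\cong\Bi G_f$ with the point $u(x)$, part (3) comes from factoring $\Spec k\arr\Delta'\arr\Delta$ through a subgerbe of the finite stack $\Delta$, and part (4) from the inclusion $f_*\odi\stY\hookrightarrow g_*\odi\stT$ in $\EF(\Vect(\stX))$ and the resulting inclusion of Tannakian subcategories $\Vect(\Bi G_f)\subseteq\Vect(\Bi G)$ compatibly with the fibre functor at $x$, yielding the surjection $G\arr G_f$. You merely supply some details the paper leaves implicit (the explicit orbit description $\Delta'\cong\Bi H$ with $H=\Stab_{G_f}(d_0)$, and the verification that $\lambda\circ j=h$ together with uniqueness of $j$).
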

We then define the {Galois closure} of a pointed essentially finite cover
$f\colon \sY\arr\sX$ to be the Nori-reduced torsor $\sP_f\arr \sX$ in Theorem 
\ref{closure of an essentially finite cover}. In particular,  if $f\colon \sY\arr\sX$ is a pointed inflexible essentially 
finite cover (see Definition \ref{inflexible essentially finite covers}), then it admits a Galois closure in the sense of
(\ding{73}) (see Lemma \ref{GC for inflexible essentially finite covers}).

Next we deal with the Galois closure problem for towers. Let $G$ and $H$ be finite group schemes over a field $k$.
A $(G,H)$-tower of torsors over a  $k$-algebraic stack $\stX$ consists of maps $f\colon\stZ \arrdi h \stY \arrdi g \stX$ where
$h$ and $g$ are an $H$-torsor and a $G$-torsor respectively. We first show that under some conditions $f=g\circ h$ is an essentially finite
cover (Lemma \ref{towers are ess finite}). Then we define the notion of a \textit{Galois envelope} (see Definition \ref{Galois closure}) in the sense of ($\Diamond$). We investigate when the torsor $\sP_f$ obtained in
Theorem \ref{closure of an essentially finite cover} for $\sZ\arr \sX$ is a Galois envelope, if $\sP_f$ is really a Galois envelope then we call it the \textit{Galois closure  for the tower $\stZ \arrdi h \stY \arrdi g \stX$}.
Here we do not insist that $g,h$ are Nori-reduced torsors (which are "Galois covers" in the \'etale case). 
In order to obtain the Galois envelope we study the moduli $\Bi(G,H)$ of all $(G,H)$-towers  (Definition \ref{the stack B(G,H)}) which is an algebraic stack locally of finite type over $k$ (Proposition \ref{geometry of BGH}).  
Our main result for pointed towers is the following:

\begin{thmIII}\label{main theorem for Galois closure}
 Let $\stX$ be a pseudo-proper (see Definition \ref{pseudo-proper}) and inflexible
(see Definition \ref{inflexible}) algebraic stack of finite type over $k$. If $\car k > 0$ assume that 
$\dim_k \Hl^1(\stX,E)<\infty$ for all vector bundles $E$ on $\stX$. 
Then, if $\stZ\arrdi h \stY\arrdi g \stX$ is a $(G,H)$-tower of torsors then $\stZ\arr \stX$ is an essentially finite cover. 
Assume moreover that $\sZ$ has a $k$-point $z\in \stZ(k)$. Then the tower admits a pointed Nori-reduced Galois closure $(\stP,p) \arrdi \lambda (\stZ,z)$ such that:
 \begin{enumerate}
 \item $\lambda$ is faithfully flat if and only if the torsors in the tower are Nori-reduced, or equivalently $\stZ$ and $\stY$ are inflexible, and in this case it is a Nori-reduced torsor under a finite group scheme.
  \item $\stP\arr \stX$ is a torsor under a finite subgroup of the affine and of finite type $k$-group scheme $\Autsh_{\Bi(G,H)}(\xi)$, where $\xi\in \Bi(G,H)(k)$ is the tower fiber of the given tower over $x=gh(z)\colon \Spec k\arr \stX$.
 \end{enumerate}
\end{thmIII}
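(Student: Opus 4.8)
The plan is to deduce Theorem \ref{main theorem for Galois closure} from Theorem \ref{closure of an essentially finite cover} applied to the composite cover $f=g\circ h\colon \stZ\arr\stX$, reading off (1) and (2) from the data it produces together with the structure of the moduli stack $\Bi(G,H)$. First, by Lemma \ref{towers are ess finite} the map $f$ is an essentially finite cover: $h_*\odi\stZ$ is the regular representation of $H$ on $\stY$, hence essentially finite, and then $f_*\odi\stZ=g_*(h_*\odi\stZ)$ is essentially finite on $\stX$ because pushforward along an essentially finite cover preserves essentially finite bundles under the hypotheses at hand (this is the content of Theorem \ref{main thm Nori gerbe of ess finite cover}, and is the only place positivity of the characteristic enters). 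Applying Theorem \ref{closure of an essentially finite cover} to $f\colon(\stZ,z)\arr\stX$ gives the monodromy gerbe $\Gamma$ of $f_*\odi\stZ$, the canonical $u\colon\stX\arr\Gamma$, the pointed Nori-reduced torsor $\pi\colon(\stP,p)\arr(\stX,x)$ under $G_f=\Autsh_\Gamma(u(x))$, and the factorization $\lambda\colon(\stP,p)\arr(\stZ,z)$ over $\stX$ with the universal property of part (4) there. We take $(\stP,p)\arrdi\lambda(\stZ,z)$ as candidate; it remains to prove (1), to prove (2), and to check the Galois-envelope property so that the name ``Galois closure of the tower'' is warranted.

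\emph{Proof of (1).} By Theorem \ref{closure of an essentially finite cover}(2), $\lambda$ is faithfully flat iff $\Delta$ is a gerbe, in which case $\lambda$ is a Nori-reduced torsor under a (necessarily finite) subgroup scheme of $G_f$; so it suffices to show that ``$\Delta$ is a gerbe'', ``$\stZ$ and $\stY$ are inflexible'' and ``$g$ and $h$ are Nori-reduced'' are equivalent. If $\stZ$ is inflexible then $\Delta$ is a gerbe by Theorem \ref{closure of an essentially finite cover}(2). Conversely, if $\Delta$ is a gerbe then $\lambda$ is faithfully flat, and since $\pi$ is Nori-reduced the stack $\stP$ is inflexible by Theorem \ref{main thm Nori gerbe of ess finite cover}(3), so $\Hl^0(\stP,\odi\stP)=k$; faithful flatness of $\lambda$ forces $\Hl^0(\stZ,\odi\stZ)\inj\Hl^0(\stP,\odi\stP)=k$, and since inflexibility of an essentially finite cover is equivalent to $\Hl^0=k$ (compare Theorem \ref{main thm Nori gerbe of ess finite cover}(1): the finite stack $\Pi$ over the gerbe $\Pi^\NN_{\stX/k}$ is a gerbe over $k$ precisely when its global functions are $k$), $\stZ$ is inflexible. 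Then $\Hl^0(\stY,\odi\stY)\inj\Hl^0(\stZ,\odi\stZ)=k$ forces $\stY$ inflexible by Theorem \ref{main thm Nori gerbe of ess finite cover}(3) for the $G$-torsor $g$, and a further application of Theorem \ref{main thm Nori gerbe of ess finite cover}(3) to $g$ and, over the base $\stY$ (which inherits the hypotheses of the theorem via the finite flat $g$), to the $H$-torsor $h$, yields the equivalence with ``$g$ and $h$ Nori-reduced''. The remaining assertion of (1) is the ``in this case'' clause of Theorem \ref{closure of an essentially finite cover}(2).

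\emph{Proof of (2) and of the envelope property.} The tower $\stZ\arrdi h\stY\arrdi g\stX$ is classified by a morphism $\phi\colon\stX\arr\Bi(G,H)$ with $\phi(x)=\xi$, and pulling back the (finite flat) universal tower over $\Bi(G,H)$ identifies its structure sheaf with $f_*\odi\stZ$ together with its $(G,H)$-tower-algebra structure. That structure is internal to the Tannakian subcategory of $\EF(\Vect(\stX))$ generated by $f_*\odi\stZ$, i.e.\ to $\Gamma$, so $\phi$ factors as $q\circ u$ for some $q\colon\Gamma\arr\Bi(G,H)$ with $q(u(x))=\xi$; since the essential image of $q^{*}$ generates $\Vect(\Gamma)$, the induced map $G_f\arr\Autsh_{\Bi(G,H)}(\xi)$ is a monomorphism, hence a closed immersion of affine finite-type $k$-group schemes by Proposition \ref{geometry of BGH}. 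This gives (2). For the envelope property one uses that $\stP\cong\stX\times_{\Gamma}\Spec k$ along $u(x)$ by the Cartesian squares of Theorem \ref{closure of an essentially finite cover}, so that $\phi\circ\pi$ factors through $\Spec k\arrdi\xi\Bi(G,H)$; equivalently the pull-back of the tower to $\stP$ is $\xi\times_k\stP$, so $\phi$ itself factors through the map $\Bi G_f\arr\Bi(G,H)$ attached to the action of $G_f$ on $\xi$. Composing with the forgetful morphism $\Bi(G,H)\arr\Bi G$ and using the $k$-point $z$ (and its image in $\stY$) to trivialize the $G$-torsor $\stY_x$ and the $H$-torsor lying over its unit point produces the homomorphisms $G_f\arr G$ and $\Ker(G_f\arr G)\arr H$ together with the equivariance of $\stP\arr\stY$ and of $\lambda$; this is the data demanded by Definition \ref{Galois closure}, so $\pi$ is a Galois envelope of the tower, and being Nori-reduced it is its Galois closure.

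\emph{Main obstacle.} The crux is the paragraph on (2): showing that the $(G,H)$-tower structure descends to the monodromy gerbe $\Gamma$ and identifying $G_f$ as a closed subgroup of $\Autsh_{\Bi(G,H)}(\xi)$, which needs both the Tannakian analysis of $\Gamma$ and the structural facts about $\Bi(G,H)$ from Proposition \ref{geometry of BGH}; and, upstream of everything, the nontrivial point that $f_*\odi\stZ$ is essentially finite at all — Lemma \ref{towers are ess finite}, resting on Theorem \ref{main thm Nori gerbe of ess finite cover} — which fails without the pseudo-properness and $\Hl^1$-finiteness hypotheses on $\stX$.
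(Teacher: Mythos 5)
Your overall strategy coincides with the paper's: essential finiteness of $f=g\circ h$ via Lemma \ref{towers are ess finite}, the candidate closure from Theorem \ref{closure of an essentially finite cover}, descent of the tower structure to the monodromy gerbe $\Gamma=\Bi G_f$ to obtain both the envelope property and the embedding $G_f\subseteq \Autsh_{\Bi(G,H)}(\xi)$ (the paper packages exactly this as Theorem \ref{Galois closure for essentially finite towers}), and Theorem \ref{main thm Nori gerbe of ess finite cover} together with Lemma \ref{morphisms of torsors} over the two bases $\stX$ and $\stY$ for part (1). Your treatment of (2) and of the envelope data is compressed but sound, and your Tannakian phrasing of the descent of the tower to $\Gamma$ is a legitimate variant of the paper's more explicit construction.

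There is, however, a genuine flaw in your proof of (1): the assertion that ``inflexibility of an essentially finite cover is equivalent to $\Hl^0=k$'' is false, and the parenthetical justification misreads Theorem \ref{main thm Nori gerbe of ess finite cover}(1), which says only that $\stZ$ is inflexible iff $\Pi$ is a gerbe --- not that this is detected by global sections. The paper's Example \ref{counterexample connected cover non inflexible} is precisely a counterexample: an essentially finite cover $Y\arr X$ of an elliptic curve with $\Hl^0(\odi Y)=k$ and $Y$ not inflexible. The $\Hl^0$-criterion is available only for torsors (part (3) of Theorem \ref{main thm Nori gerbe of ess finite cover}) and for \'etale covers (part (2)), and $\stZ\arr\stX$ is neither in general. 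The step is repairable without new ideas, but in the opposite order from the one you chose: from $\Hl^0(\odi\stZ)=k$ first deduce $\Hl^0(\odi\stY)=k$ (since $\odi\stY\inj h_*\odi\stZ$), hence $\stY$ is inflexible by Theorem \ref{main thm Nori gerbe of ess finite cover}(3) applied to the $G$-torsor $g$; then, $\stY$ being pseudo-proper and inflexible, apply the same statement to the $H$-torsor $h$ over the base $\stY$ to conclude that $\stZ$ is inflexible and $h$ is Nori-reduced. (Alternatively, for the single implication ``$\Delta$ is a gerbe $\Rightarrow$ $\stZ$ is inflexible'' you can quote Theorem \ref{main thm Nori gerbe of ess finite cover}(1) directly, since $\Delta$ is a gerbe iff $\Pi$ is, as observed in the proof of Lemma \ref{trivial section iff gerbe}.) As written, your argument deduces that $\stZ$ is inflexible before knowing that $\stY$ is, by appealing to a principle the paper itself refutes.
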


Previous attempts to give an affirmative answer to this question failed: both \cite{G} and
 \cite{ABE} (unpublished) contain mistakes in the proof of their main theorems. In \cite{G}, Garuti claims a functorial construction of the "Galois closure" (which is the Galois envelope in our paper) for all towers of torsors under finite
 locally free group schemes over any locally Noetherian base $B$. He works
 basically without any assumption. But this is too good to be true. Without
 rational points the claim is wrong, otherwise the map in \cite[Corollary 5.14, pp. 41]{Zh} would be an isomorphism. Example \ref{counterexample 2} shows that even for a smooth connected affine scheme $X$ over an algebraically closed field $k$ there are pointed towers without a Galois envelope (because $X$ is not pseudo-proper).  Example \ref{counterexample 1} shows that the assumption $\dim_k \Hl^1(\stX,E)<\infty$ is also important. This condition is needed to ensure that every $\mathbb G_a$-torsor over a cover of $\stX$ is induced by a torsor under a finite subgroup of $\mathbb G_a$.
Notice that the cohomological assumption is met for proper algebraic stacks of finite type over $k$ (\cite{Fal}). Moreover a geometrically connected and geometrically reduced algebraic stack over $k$ is inflexible (see Remark \ref{inflexible properties}).

Recently, in \cite{Otabe} S. Otabe used our result to study the lifting problem for linearly reductive torsors over curves. He also studied the Galois closure problem in the appendix for non-proper base schemes.

We also study a similar problem for the $S$-fundamental gerbe (see \cite{BPS}, \cite{BHD}, \cite{L1}, \cite{L2}). Let $\Ns(\sX)$ be the category of semi-stable vector bundles on $\sX$ (see Definition \ref{Nori 
    semistable}). The following analog of Theorem \ref{main thm Nori gerbe of ess finite 
cover} is proved for $S$-fundamental gerbes.

\begin{thmIV}\label{main thm for the S-gerbe}
 Let $\stX$ be a pseudo-proper algebraic stack of finite type over $k$ and with an $\SS$-fundamental gerbe. Then $\stX$ is inflexible and the profinite quotient of the $\SS$-fundamental gerbe of $\stX$ over $k$ is the Nori fundamental gerbe of $\stX$ over $k$. Let also $f\colon \stY\arr \stX$ be an essentially finite cover with $\stY$ inflexible and, if $\car k > 0$, assume that $\dim_k \Hl^1(\stX,E)<\infty$ for all vector bundles $E$ on $\stX$. Then
 $$
 \Ns(\stY)=\{ V \in \Vect(\stY) \st f_*V\in \Ns(\stX) \}
 $$
 and $\stY$ has an $\SS$-fundamental gerbe fitting in a $2$-Cartesian diagram
   \[
  \begin{tikzpicture}[xscale=1.8,yscale=-1.1]
    \node (A0_0) at (0, 0) {$\stY$};
    \node (A0_1) at (1, 0) {$\Pi^\SS_\stY$};
    \node (A0_2) at (2, 0) {$\Pi^\NN_\stY$};
    \node (A1_0) at (0, 1) {$\stX$};
    \node (A1_1) at (1, 1) {$\Pi^\SS_\stX$};
    \node (A1_2) at (2, 1) {$\Pi^\NN_\stX$};
    \path (A0_0) edge [->]node [auto] {$\scriptstyle{}$} (A0_1);
    \path (A0_1) edge [->]node [auto] {$\scriptstyle{}$} (A0_2);
    \path (A1_0) edge [->]node [auto] {$\scriptstyle{}$} (A1_1);
    \path (A0_2) edge [->]node [auto] {$\scriptstyle{}$} (A1_2);
    \path (A1_1) edge [->]node [auto] {$\scriptstyle{}$} (A1_2);
    \path (A0_0) edge [->]node [auto] {$\scriptstyle{}$} (A1_0);
    \path (A0_1) edge [->]node [auto] {$\scriptstyle{}$} (A1_1);
  \end{tikzpicture}
  \]
In particular if $y\in \stY(k)$ and $x=f(y)\in \stX(k)$ then there is a Cartesian diagram of affine group schemes
    \[
  \begin{tikzpicture}[xscale=2.8,yscale=-1.1]
    \node (A0_0) at (0, 0) {$\pi^\SS(\stY,y)$};
    \node (A0_1) at (1, 0) {$\pi^\NN(\stY,y)$};
    \node (A1_0) at (0, 1) {$\pi^\SS(\stX,x)$};
    \node (A1_1) at (1, 1) {$\pi^\NN(\stX,x)$};
    \path (A0_0) edge [->>]node [auto] {$\scriptstyle{}$} (A0_1);
    \path (A1_0) edge [->>]node [auto] {$\scriptstyle{}$} (A1_1);
    \path (A0_1) edge [right hook->]node [auto] {$\scriptstyle{}$} (A1_1);
    \path (A0_0) edge [right hook->]node [auto] {$\scriptstyle{}$} (A1_0);
  \end{tikzpicture}
  \]
  When $f: \stY \arr \stX$ is a pointed torsor under a finite group scheme $G$, then the following sequence is exact
  $$1 \arr \pi^\SS(\stY,y) \arr \pi^\SS(\stX,x) \arr G \arr 1. $$
\end{thmIV}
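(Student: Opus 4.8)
The plan is to deduce the whole statement from Theorem~\ref{main thm Nori gerbe of ess finite cover} together with a single ``push‑forward'' input: that $f_*$ carries Nori semistable bundles to Nori semistable bundles. I would begin with the assertions concerning $\stX$ alone. Every finite vector bundle on $\stX$ is Nori semistable (the classical Weil/Nori argument: a bundle satisfying a polynomial identity restricts to a semistable degree–$0$ bundle along every map from a smooth projective curve), so $\EF(\Vect(\stX))$ is a Tannakian subcategory of $\Ns(\stX)$, and by construction it is precisely the union of all finite Tannakian subcategories of $\Ns(\stX)$. Hence the Tannakian gerbe attached to $\EF(\Vect(\stX))$ is the pro‑finite quotient of $\Pi^\SS_\stX$; being a quotient of a gerbe over $k$ it is again a gerbe over $k$, which by \ref{inflexible properties} is exactly the statement that $\stX$ is inflexible and that this gerbe is $\Pi^\NN_{\stX/k}$.

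The core of the proof is the identity $\Ns(\stY)=\{V\in\Vect(\stY)\st f_*V\in\Ns(\stX)\}$, and the hard part will be the inclusion ``$\subseteq$'', i.e.\ that $f_*$ preserves Nori semistability. I would establish it following the template of Lemma~\ref{push of ess finite is still ess finite}: using the characterisation of semistable bundles of \cite{TZ2} (generalising \cite{BDS} and \cite{AM}), the pseudo‑properness of $\stX$, and, in positive characteristic, the hypothesis $\dim_k\Hl^1(\stX,E)<\infty$ for all vector bundles $E$. Essential finiteness of $f$ enters here in an indispensable way: it forces $f_*\odi\stY\in\EF(\Vect(\stX))\subseteq\Ns(\stX)$, and this Nori semistability of $f_*\odi\stY$ is exactly what removes the numerical (slope/genus) obstruction which makes the statement false for a general finite flat morphism.

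Granting this, the rest is Tannakian bookkeeping on top of Theorem~\ref{main thm Nori gerbe of ess finite cover}. Using the $2$–Cartesian square $\stY=\stX\times_{\Pi^\NN_\stX}\Pi^\NN_\stY$ and the finite flat map $\Pi^\NN_\stY\to\Pi^\NN_\stX$ provided by that theorem, I would set
\[
\Gamma:=\Pi^\SS_\stX\times_{\Pi^\NN_\stX}\Pi^\NN_\stY .
\]
Since $\Pi^\SS_\stX\to\Pi^\NN_\stX$ is faithfully flat and affine while $\Pi^\NN_\stY\to\Pi^\NN_\stX$ is finite flat, $\Gamma$ is a gerbe over $k$, the map $\Gamma\to\Pi^\NN_\stY$ is faithfully flat and affine, $\Gamma\to\Pi^\SS_\stX$ is finite flat, and $\stY=\stX\times_{\Pi^\SS_\stX}\Gamma$. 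Flat base change along $\stX\to\Pi^\SS_\stX$ (using that $\Gamma\to\Pi^\SS_\stX$ is flat) together with the defining property $\Hl^0(\Pi^\SS_\stX,-)=\Hl^0(\stX,-)$ of the $\SS$–fundamental gerbe then shows that pull‑back $\Vect(\Gamma)\to\Vect(\stY)$ is fully faithful, with essential image contained in $\Ns(\stY)$ (it is generated by the pull‑backs of $\Ns(\stX)=\Vect(\Pi^\SS_\stX)$ and of $\EF(\Vect(\stY))=\Vect(\Pi^\NN_\stY)$). For the reverse inclusion I would write $f$ as $\Specsh$ of the algebra object $\mathcal A:=f_*\odi\stY$ of $\Ns(\stX)=\Vect(\Pi^\SS_\stX)$: if $V\in\Ns(\stY)$ then $f_*V\in\Ns(\stX)$ by the ``$\subseteq$'' step, so $f_*V\simeq\alpha^*W_0$ for a unique $W_0\in\Vect(\Pi^\SS_\stX)$, and its $\mathcal A$–module structure descends along the fully faithful $\alpha^*$ to an $\mathcal A_0$–module structure, i.e.\ $V$ is pulled back from $\Gamma$. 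This simultaneously yields the identity $\Ns(\stY)=\{V\st f_*V\in\Ns(\stX)\}$, shows that $\stY\to\Gamma$ exhibits $\Gamma$ as the $\SS$–fundamental gerbe $\Pi^\SS_\stY$ of $\stY$, and produces the displayed $2$–Cartesian diagram.

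Finally I would read off the diagram of affine group schemes by taking automorphism groups at $y$ and $x=f(y)$: from the fibre‑product description $\pi^\SS(\stY,y)=\pi^\SS(\stX,x)\times_{\pi^\NN(\stX,x)}\pi^\NN(\stY,y)$, so $\pi^\SS(\stY,y)$ is the preimage of $\pi^\NN(\stY,y)$ under the (faithfully flat) pro‑finite completion $\pi^\SS(\stX,x)\twoheadrightarrow\pi^\NN(\stX,x)$, whence the horizontal arrows are faithfully flat and the vertical ones closed immersions. When $f$ is a pointed $G$–torsor, Theorem~\ref{main thm Nori gerbe of ess finite cover}(3) gives $\pi^\NN(\stX,x)/\pi^\NN(\stY,y)\cong G$ with $\pi^\NN(\stY,y)$ normal; its preimage $\pi^\SS(\stY,y)$ is then normal in $\pi^\SS(\stX,x)$ with the same quotient, giving the exact sequence $1\to\pi^\SS(\stY,y)\to\pi^\SS(\stX,x)\to G\to1$. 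I expect the only genuinely delicate point to be the push‑forward preservation of Nori semistability in the second paragraph; once it is in place everything above is formal.
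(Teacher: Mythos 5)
Your overall architecture coincides with the paper's: the statements about $\stX$ alone are Remark \ref{NSX Tannakian implies inflexible}, the gerbe $\Gamma=\Pi^\SS_\stX\times_{\Pi^\NN_\stX}\Pi^\NN_\stY$ is exactly the $\Pi$ the paper constructs on top of Theorem \ref{main thm Nori gerbe of ess finite cover}, the identification of its essential image in $\Vect(\stY)$ is Lemma \ref{vector bundles of an extension} (your ``descent of the $\mathcal A$-module structure'' is a hand-made version of it), and the group-scheme diagram and the exact sequence in the torsor case come from Lemma \ref{key product gerbes} applied to the Cartesian square. All of this is fine, modulo the minor slip that $\Pi^\SS_\stX\arr\Pi^\NN_\stX$ is faithfully flat but not affine (its fibre is the classifying stack of the kernel); only flatness is actually needed for the base-change arguments, so this is harmless.

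The genuine gap is the one step you defer, namely the identity $\Ns(\stY)=\{V\st f_*V\in\Ns(\stX)\}$ (Lemma \ref{push of Nori semistable are Nori semistable} in the paper), and the route you sketch for it would not work. There is no characterisation of Nori semistable bundles in \cite{TZ2} analogous to Theorem \ref{key thm}: that result detects \emph{essential finiteness} of a bundle from its trivialisation by a cover, whereas Nori semistability is a numerical condition on restrictions to curves, and no cohomological finiteness enters; in particular the hypothesis $\dim_k\Hl^1(\stX,E)<\infty$ plays no role in this step (it is needed only to invoke Theorem \ref{main thm Nori gerbe of ess finite cover}). The argument that does work borrows only the \emph{reduction} part of the template of Lemma \ref{push of ess finite is still ess finite}: test on a curve $C\arr\stX$ (the pullback of $f_*\odi\stY$ to $C$ is still essentially finite by Remark \ref{pullback from finite stack}, which is where pseudo-properness and inflexibility of $\stX$ enter), pass to a surjective cover of $C$ trivialising the monodromy torsor so that the cover becomes $C\times\Spec A\arr C$ with $A$ finite local, and then apply Lemma \ref{decomposition series lemma} to get a filtration of $f_*F$ all of whose graded pieces are $i^*F$. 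From this one reads off $\det(f_*F)\simeq(\det i^*F)^N$ and the equality of all slopes, and both implications follow from the elementary fact that an extension of semistable bundles of equal slope is semistable. Your heuristic that semistability of $f_*\odi\stY$ ``removes the slope obstruction'' gestures at the right phenomenon, but as stated it is not a proof, and essential finiteness is actually used through the structure theory (the reduction to $C\times\Spec A$), not merely through $f_*\odi\stY$ being semistable.
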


The paper is divided as follows. In the first section we recall part of the machinery about Nori fundamental gerbes and prove some preliminary results. In the second section we study essentially finite covers and their Nori fundamental gerbes, proving in particular Theorem \ref{main thm Nori gerbe of ess finite 
cover}, Theorem \ref{closure of an essentially finite cover} and Corollary \ref{main thm Nori gerbe pointed case}. In the third section we study towers of torsors and their Galois closures, proving Theorems \ref{Galois closure when G or H is etale} and \ref{main theorem for Galois closure}, while in the fourth section we study the $S$-fundamental gerbe and prove Theorem \ref{main thm for the S-gerbe}. Finally in the last section we collect some counter-examples.

\section*{Acknowledgement}
 We would like to thank  H\'el\`ene Esnault and Angelo Vistoli
for helpful conversations and suggestions received. We would also like to thank
the referee for many helpful remarks.
\section{Notation and Preliminaries}

\subsection{Notation}

By a fibered category over a scheme $S$ we will always mean a category fibered in 
groupoids over the category $\Aff/S$ of affine schemes over $S$.

Let $\sX$ be a fibered category over  $\Aff/S$, where $S$ is an affine scheme.
An fpqc-altlas from a scheme $U$ is a map $U\arr \sX$ representable by fpqc-coverings of algebraic spaces, i.e. for any algebraic space $T$ mapping to $\sX$ the fibered product $U\times_\sX T$ is again an algebraic space over $S$ which is an fpqc covering of $T$. A fibered category $\sX$ is said to be quasi-compact if it has an fpqc-atlas from a quasi-compact scheme. If $f\colon \sX\arr\sY$ is a quasi-compact and quasi-separated map between fibered categories admitting fpqc-atlases from schemes, then the pullback $f^*:\QCoh(\sY)\arr\QCoh(\sX)$ admits a right adjoint $f_*:\QCoh(\sX)\arr\QCoh(\sY)$ which commutes with flat base change, where $\QCoh(-)$ denotes the category of quasi-coherent sheaves (see \cite[Proposition 1.7]{Ton}).

A cover of a fibered category $\stX$ is a finite, flat and finitely presented
morphism or, equivalently, an affine map $f\colon \stY\arr \stX$ with the
property that
$f_*\odi\stY$ is locally free of finite rank. If $\stX$ is defined over a field
$k$ a pointed cover over $k$ $$(\stY,y)\arr(\stX,x)$$ is a cover $f\colon \stY\arr
\stX$ with $x\in \stX(k)$ and $y \in \stY_x(k)$, where $\stY_x$ denotes the fiber
of $f$ over $x$, which is a finite $k$-scheme; equivalently $y\,\in \,\stY(k)$ with
a given isomorphism $f(y) \,\simeq\, x$.

Given a morphism of schemes $U\arr V$ and a functor $F\colon \Aff/U \arr \sets$, 
the Weil restriction of $F$ along $U \arr V$ is the functor
\[
    W_{U/V}(F)\colon \Aff/V \arr \sets\comma\ Z\,\longmapsto\, \Hom(Z\times_V U,F)\, .
\]
Given any functor $G\colon \Aff/V \arr \sets$, we set $W_{U}(G)\coloneqq W_{U/V}(G\times_V U)$.

Injectivity and surjectivity of morphisms of group schemes always mean the corresponding properties for fpqc sheaves. For affine group schemes over a field an injective morphism is a closed immersion and a surjective morphism is faithfully flat 
(\cite[Theorem 15.5]{Wat} for surjectivity).

\subsection{Preliminaries}

We will now recall some results used in later sections.

Fix a base field $k$.

For properties of affine gerbes over a field (often improperly called just
gerbes) and Tannakian categories used here the reader is referred to
\cite[Appendix B]{TZ}.

 \begin{defn}[{\cite[Definition 5.1, Definition 5.3]{BV}}]\label{inflexible}
For a fibered category $\stX$ over $k$, the \emph{Nori fundamental gerbe} 
(respectively, \emph{Nori \'etale fundamental gerbe}) of $\stX/k$ is a profinite
(respectively, pro\'etale) gerbe $\Pi$ over $k$ together with a map $\stX\arr \Pi$
such that for all finite (respectively, finite and \'etale) stacks $\Gamma$ over $k$
the pullback functor
 \[
 \Hom_k(\Pi,\Gamma)\arr \Hom_k(\stX,\Gamma)
 \]
 is an equivalence of categories. Furthermore, if this gerbe exists, it is unique up to a unique
isomorphism and in that case it will be denoted by $\Pi^\NN_{\stX/k}$ (respectively,
$\Pi^{\NN,\et}_{\stX/k})$; sometimes $/k$ will be dropped if it is clear from the context.

We call $\sX$ \emph{inflexible} if it is non-empty and all maps from it to a finite stack
over $k$ factor through a finite gerbe over $k$.
\end{defn}

\begin{rmk}\label{inflexible properties}
By \cite[p.~13, Theorem 5.7]{BV} $\sX$ admits a Nori fundamental gerbe if and only if
it is inflexible; in this case, the Nori \'etale fundamental gerbe of $\stX$ is the
maximal pro\'etale quotient of the Nori fundamental gerbe of $\stX$.
If $\stX$ is reduced, quasi-compact and quasi-separated,
then $\stX$ is inflexible if and only if $k$ is algebraically closed in
$\Hl^0(\odi \stX)$ \cite[Theorem 4.4]{TZ}. In particular if $\stX$ is geometrically
connected and geometrically reduced, then it is inflexible.
\end{rmk}

\begin{defn}
 If $\stX$ is an inflexible fibered category over $k$ with a rational point
$x\in \stX(k)$ and Nori gerbe $\psi\colon \stX\arr \Pi^\NN_{\stX/k}$,
the \emph{Nori fundamental group scheme} $\pi^\NN(\stX/k,x)$ of $(\stX,x)$ over
$k$ is the sheaf of automorphisms of $\psi(x)\in \Pi^\NN_{\stX/k}(k)$. Again
$/k$ will often be dropped if it is clear from the context. 
\end{defn}

\begin{rmk}
The Nori fundamental group scheme $\pi^\NN(\stX,x)$ is a profinite group scheme and its classifying stack
$\Bi\pi^\NN(\stX,x)$ is isomorphic to $\Pi^\NN_\stX$ (the trivial torsor is sent to $\psi(x)$). The
universal property of $\Pi^\NN_\stX$ translates into the following: for all finite group schemes
$G$ over $k$ the map 
  \[
  \begin{tikzpicture}[xscale=8.1,yscale=-0.7]
    \node (A0_0) at (0, 0) {$\Hom_{k\text{-groups}}(\pi^\NN(\stX,x),G)$};
    \node (A0_1) at (1, 0) {$\{\text{pointed }G\text{-torsors }(\stP,p)\arr (\stX,x)\}/\simeq$};
    \node (A1_0) at (0, 1) {$(\pi^\NN(\stX,x)\arr  G)$};
    \node (A1_1) at (1, 1) {$(\stX\arr \Bi \pi^\NN(\stX,x)\arr \Bi G)$};
    \path (A0_0) edge [->]node [auto] {$\scriptstyle{}$} (A0_1);
    \path (A1_0) edge [|->,gray]node [auto] {$\scriptstyle{}$} (A1_1);
  \end{tikzpicture}
  \]
is bijective.
\end{rmk}

\begin{defn}[{\cite[p.~21, Definition 7.7]{BV}}]\label{essentially finite}
Let $\shC$ be an additive and monoidal category. An object $E\in\shC$ is called \emph{finite}
if there exist polynomials $f\neq g \,\in\, \N[X]$ and an isomorphism $f(E)\simeq g(E)$;
the object $E$ is called \emph{essentially finite} if it is a kernel of a homomorphism of
finite objects of $\shC$. Let $\EF(\shC)$ denote the full subcategory of $\shC$ consisting of essentially finite objects.
\end{defn}

\begin{defn}[{\cite[p.~20, Definition 7.1]{BV}}] \label{pseudo-proper}
A category $\sX$ fibered in groupoids over a field $k$ is \textit{pseudo-proper} if it satisfies the following two conditions:
\begin{enumerate}
\item there exists a quasi-compact scheme $U$ together with a morphism $U\arr \stX$ which is
representable, faithfully flat, quasi-compact, and quasi-separated, and
\item for all vector bundles $E$ on $\stX$ the $k$-vector space $\Hl^0(\sX,E)$ is finite-dimensional.
\end{enumerate}
\end{defn}

\begin{ex}[{\cite[p.~20, Example 7.2]{BV}}] 
    Examples of pseudo-proper fibered categories are proper algebraic stacks and affine gerbes.
\end{ex}

\begin{rmk}\label{infl then HO=k}
 Let $\stX$ be a pseudo-proper algebraic stack of finite type over $k$. If $\stX$ is
inflexible then $\Hl^0(\odi\stX)=k$ (see \cite[Lemma 7.4]{BV}), while the converse
holds if $\stX$ is reduced (see Remark \ref{inflexible properties}).
\end{rmk}

\begin{defn} Let $\sC$ be a Tannakian category, and let $M\in\sC$ be an object. The \textit{monodromy gerbe} of $M$ is the gerbe corresponding
    to the Tannakian subcategory of $\sC$ generated by $M$. (See \cite[Definition B.8, pp. 42]{TZ})
\end{defn}

\begin{thm}[{\cite[p.~22, Theorem 7.9, Corollary 7.10]{BV}}]\label{monodromy essentially finite}
    Let $\sX$ be an inflexible pseudo-proper fibered category over a field $k$. Then the
pullback along $\stX\arr\Pi^\NN_{\stX/k}$ induces an equivalence
of categories $\Vect(\Pi_{\sX/k}^\NN)\arr \EF(\Vect(\sX))$.

Let $\sC$ be a Tannakian category. Then $\EF(\sC)$ is the Tannakian subcategory of $\shC$ of objects whose monodromy gerbe is finite.
\end{thm}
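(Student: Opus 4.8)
The plan is to reduce everything to two inputs. The first is Nori's combinatorial lemma on finite objects of a Tannakian category $\sC$: a finite direct sum or tensor product of finite objects is again finite, the dual of a finite object is finite, and the Tannakian subcategory $\la E\ra$ generated by a single finite object $E$ is \emph{finite}, that is, equivalent to $\Rep_k G$ for a finite group scheme $G$. The decisive last point is obtained by deducing from an isomorphism $f(E)\simeq g(E)$ (with $f\neq g\in\N[X]$) that only finitely many isomorphism classes of indecomposables occur among the objects $E^{\otimes n}$, $n\geq 0$, and this finiteness bounds the dimension of the Hopf algebra of the Tannakian group of $\la E\ra$. The second input, needed only for the first assertion, is that for $\stX$ inflexible and pseudo-proper the category $\EF(\Vect(\stX))$ is itself a Tannakian category over $k$: inflexibility gives $\End(\odi\stX)=\Hl^0(\stX,\odi\stX)=k$ by Remark \ref{infl then HO=k}, and pseudo-properness makes all the relevant spaces of global sections finite-dimensional, so that $\EF(\Vect(\stX))$ is abelian and rigid and has enough fibre functors — throughout one works with affine gerbes over $k$ rather than classical neutral Tannakian categories.

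First I would prove the second assertion. If $M\in\EF(\sC)$, pick a presentation $M=\ker(P\xrightarrow{\varphi}Q)$ with $P,Q$ finite; then $P\oplus Q$ is finite, $\la P\oplus Q\ra$ is finite by the combinatorial lemma, and since $M=\ker\varphi$ lies in $\la P\oplus Q\ra$ one has $\la M\ra\subseteq\la P\oplus Q\ra$. As Tannakian subcategories correspond to faithfully flat quotients of gerbes, the monodromy gerbe of $M$ is a quotient of the finite gerbe attached to $\la P\oplus Q\ra$, hence finite. Conversely, suppose the monodromy gerbe $\Gamma$ of $M$ is finite. Since essential finiteness of an object, kernels, and the tensor operations all descend along a faithfully flat extension of the base field, I may assume $\Gamma=\Bi_k G$ with $G$ a finite group scheme; in $\Rep_k G$ the regular representation $\odi G$ is a finite object (because $\odi G\otimes V\simeq\odi G^{\oplus\dim_k V}$ for every $V$), every object $V$ embeds $G$-equivariantly into some $\odi G^{\oplus n}$, and likewise $\odi G^{\oplus n}/V$ into some $\odi G^{\oplus m}$, so that $V=\ker(\odi G^{\oplus n}\arr\odi G^{\oplus m})$ is essentially finite. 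Finally, the class of objects with finite monodromy gerbe is manifestly stable under $\oplus$, $\otimes$, $\duale{(-)}$ and subquotients — the monodromy of $M\oplus N$ is a subgroup scheme of the product of the monodromies of $M$ and $N$, a subquotient of $M$ has monodromy a quotient of that of $M$, and $\duale M$ has the same monodromy as $M$ — so $\EF(\sC)$ is a Tannakian subcategory, as asserted.

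For the first assertion I would argue as follows. By the easy direction just established, every essentially finite $V\in\Vect(\stX)$ generates a finite Tannakian subcategory $\la V\ra$ of $\EF(\Vect(\stX))$, and these subcategories form a filtered family whose union is all of $\EF(\Vect(\stX))$; hence, granting that $\EF(\Vect(\stX))$ is Tannakian, its Tannakian gerbe $\Gamma$ is the cofiltered $2$-limit of the finite gerbes of the $\la V\ra$, so $\Gamma$ is profinite and carries a canonical morphism $\stX\arr\Gamma$ (dual to the inclusion $\Vect(\Gamma)=\EF(\Vect(\stX))\hookrightarrow\Vect(\stX)$). To identify $\Gamma$ with $\Pi^\NN_{\stX/k}$ I would check the defining universal property: for a finite gerbe $\Delta$, a $k$-morphism $\stX\arr\Delta$ is essentially the same datum as an exact $k$-linear symmetric monoidal functor $\Vect(\Delta)\arr\Vect(\stX)$; since $\Vect(\Delta)=\EF(\Vect(\Delta))$ and such a functor preserves finiteness and kernels, it lands inside $\EF(\Vect(\stX))=\Vect(\Gamma)$, and conversely a functor $\Vect(\Delta)\arr\Vect(\Gamma)$ of this kind corresponds by Tannakian duality to a morphism $\Gamma\arr\Delta$. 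Thus $\Hom_k(\stX,\Delta)\simeq\Hom_k(\Gamma,\Delta)$ naturally in $\Delta$, so $\Gamma\simeq\Pi^\NN_{\stX/k}$; the equivalence $\Vect(\Pi^\NN_{\stX/k})\arr\EF(\Vect(\stX))$ claimed in the theorem is then precisely the identification $\Vect(\Gamma)=\EF(\Vect(\stX))$, realized by $\psi^*$ along $\psi\colon\stX\arr\Pi^\NN_{\stX/k}$, its full faithfulness being packaged — via $\Hom(V,W)=\Hl^0(\Homsh(V,W))$ — in the statement that $\EF(\Vect(\stX))$ is Tannakian.

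The hard part is the foundational claim underlying the first assertion: that $\EF(\Vect(\stX))$ is a Tannakian category (in the affine-gerbe sense) when $\stX$ is inflexible and pseudo-proper. One must verify abelianness — the genuine subtlety here being cokernels, since $\Vect(\stX)$ itself need not be abelian — rigidity, the equality $\End(\odi\stX)=k$, and the existence of enough fibre functors. This is exactly where pseudo-properness is indispensable: without it the conclusion fails, as the counterexamples in the last section demonstrate. Once this is in hand, Nori's combinatorial lemma on finite objects is the remaining technical core, and everything else is Tannakian formalism together with faithfully flat descent.
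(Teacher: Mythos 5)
First, note that the paper does not prove this statement itself: it is imported verbatim from \cite[Theorem 7.9, Corollary 7.10]{BV}, so the comparison below is with Borne--Vistoli's argument (parts of which are recalled in Remark \ref{pullback from finite stack}). Your treatment of the second assertion is essentially the right one and matches theirs: Nori's lemma that a finite object has finite monodromy gerbe, plus the regular-representation argument showing that every object of $\Vect(\Gamma)$, for $\Gamma$ a finite gerbe, sits in an exact sequence $0\arr V\arr E^{\oplus a}\arr E^{\oplus b}$ with $E$ finite. One local complaint: your reduction to $\Gamma=\Bi G$ by a faithfully flat extension $k'/k$ is not innocuous, because a presentation of $M_{k'}$ as a kernel of a map of finite objects over $k'$ does not obviously descend to such a presentation over $k$ (finiteness of a single object descends by a Noether--Deuring argument, but ``being a kernel of finite objects'' is extra data). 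The standard fix, and what \cite{BV} actually does, is to avoid neutralizing: take a surjective cover $\rho\colon T\arr\Gamma$ from a finite connected $k$-scheme and use $E=\rho_*\odi T$ directly over $k$.

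The genuine gap is in the first assertion. Your entire argument rests on the ``input'' that $\EF(\Vect(\stX))$ is a Tannakian category with an exact inclusion into $\QCoh(\stX)$, and you explicitly defer its verification to a closing paragraph that only asserts that pseudo-properness is where it is needed. But this input \emph{is} the theorem: once $\EF(\Vect(\stX))\simeq\Vect(\Gamma)$ for an affine gerbe $\Gamma$ with the inclusion realized by pullback, everything else is formal, exactly as you say. There is no known way to verify abelianness of $\EF(\Vect(\stX))$ ``from scratch'': cokernels of maps of essentially finite bundles have no reason to be vector bundles, and the paper's own Remark \ref{pullback from finite stack} warns that even the kernel defining an essentially finite bundle need not be a kernel in $\QCoh(\stX)$ unless one already knows the bundles come from a gerbe. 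The actual proof runs in the opposite direction: (i) pseudo-properness ($\dim_k\Hl^0<\infty$) is used to show that a \emph{finite} bundle is pulled back from a finite stack via the $[\Omega/\GL_r]$ construction recalled in Remark \ref{pullback from finite stack}; (ii) inflexibility makes that map factor through a finite gerbe, hence through $\Pi^\NN_{\stX}$; (iii) full faithfulness of $\Vect(\Pi^\NN_\stX)\arr\Vect(\stX)$ (via $\alpha_*\odi\stX\simeq\odi{\Pi^\NN_\stX}$, cf.\ Lemma \ref{fully faithful on Vect means pushforwars O is O}) lets one transport a map of finite bundles to $\Pi^\NN_\stX$, compute its kernel there, and pull it back, identifying the essential image with $\EF(\Vect(\stX))$; the Tannakian structure on $\EF(\Vect(\stX))$ is then a \emph{consequence} of the equivalence, not a prerequisite. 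As written, your proposal assumes its hardest step.
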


\begin{defn}\label{essentially finite cover} A cover $f \colon \stY \arr \stX$ is {\it essentially
finite} if $f_* \odi\stY$ is an essentially finite vector bundle.
\end{defn}

\begin{defn}\label{monodromy gerbe}
    Let $\stX$ be an inflexible and pseudo-proper fibered category over a field $k$. Given an object
$V$ of $\EF(\Vect(\stX))$, the gerbe corresponding to the full Tannakian subcategory of
$\EF(\Vect(\stX))$ generated by $V$ will be called the {\it monodromy gerbe} of $V$. When
$f \colon \stY \arr \stX$ is an essentially finite cover, the monodromy gerbe of the cover
is by definition the monodromy gerbe of $f_* \stO_\stY$.
\end{defn}
\begin{defn}[{\cite[Definition 5.10]{BV}}]\label{Nori-reduced}
A map $\stX\arr \Gamma$ from a fibered category over $k$ to a finite gerbe over
$k$ is called \emph{Nori-reduced} over $k$ if any faithful morphism
$\Gamma'\arrdi \alpha \Gamma$ that fits in a factorization
$\stX\arr \Gamma'\arrdi \alpha \Gamma$, where $\Gamma'$ is a gerbe, is an isomorphism.
 
 A torsor $\stP\arr \stX$ under a finite group scheme $G$ over $k$ is called Nori-reduced over $k$ if the map $\stX\arr \Bi G$ is Nori-reduced over $k$.
\end{defn}

\begin{rmk}
 If $\stX$ is inflexible, then any map from $\stX$ to a finite gerbe factors uniquely
through a Nori-reduced map (see \cite[Lemma 5.12]{BV}). Moreover $\Pi^\NN_\stX$ can
be seen as the projective limit of the Nori-reduced maps $\stX\arr \Gamma$ (see
\cite[Theorem 5.7]{BV} and its proof).
 
If $\stX$ is an inflexible and pseudo-proper fibered category, and $\phi\colon \stX\arr 
\Gamma$ is a map to a finite gerbe, then $\phi$ is Nori-reduced if and only if the induced 
map $\Pi^\NN_\stX\arr \Gamma$ is a quotient (\cite[Definition B.1, pp. 40]{TZ}); in this case $\Vect(\Gamma)\arr 
\EF(\Vect(\stX))$ is a Tannakian subcategory. This is a direct consequence of Theorem \ref{monodromy 
essentially finite} and the universal property of $\Pi^\NN_\stX$. Moreover 
$\phi_*\odi\stX\simeq \odi\Gamma$ (see \cite[Lemma 7.11]{BV}).
\end{rmk}

One of the key ingredients in the paper is the following result.

\begin{thm}[{\cite[Corollary I]{TZ2}}]\label{key thm}
Let $\stX$ be a pseudo-proper and inflexible algebraic stack of finite type over a field $k$ of
positive characteristic, and let $f\colon \stY\arr \stX$ be a surjective cover. If
$V\in \Vect(\stX)$, and $f^*V$ is free, then $V$ is essentially finite in $\Vect(\stX)$.
\end{thm}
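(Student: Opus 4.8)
The plan is to reduce, using the Tannakian description of essential finiteness from Theorem~\ref{monodromy essentially finite}, to the problem of producing a \emph{finite} gerbe over $k$ through which the $\GL_{n}$-torsor attached to $V$ factors, and then to construct that finite gerbe out of descent data for $V$ along $f$. The role of pseudo-properness of $\stX$ is to force this descent data to be finite-dimensional over $k$; this is the only (but essential) place it is used.

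\emph{Step 1 (reformulation).} Set $n=\rk V$, let $T=\Isosh_{\odi\stX}(\odi\stX^{\oplus n},V)$ be the associated $\GL_{n,k}$-torsor and $\tau\colon\stX\arr\Bi\GL_{n,k}$ its classifying morphism. By Theorem~\ref{monodromy essentially finite}, $\EF(\Vect(\stX))=\Vect(\Pi^\NN_{\stX/k})$, and every vector bundle on $\Pi^\NN_{\stX/k}$ is pulled back from one of its finite gerbe quotients; together with the universal property of $\Pi^\NN_{\stX/k}$ this gives: $V$ is essentially finite if and only if $\tau$ factors, up to $2$-isomorphism, as $\stX\arr\Gamma\arr\Bi\GL_{n,k}$ with $\Gamma$ a finite gerbe over $k$. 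So it suffices to produce such a $\Gamma$ and a factorization of $\tau$. The hypothesis $f^{*}V\simeq\odi\stY^{\oplus n}$ says exactly that $\tau\circ f$ is isomorphic to the trivial morphism $\stY\arr\Spec k\arr\Bi\GL_{n,k}$.

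\emph{Step 2 (finite-dimensionality of the descent data).} Since $f$ is a surjective cover it is faithfully flat, and $\stY$ is again pseudo-proper (for a vector bundle $E$ on $\stY$ one has $\Hl^{0}(\stY,E)=\Hl^{0}(\stX,f_{*}E)$, with $f_{*}E$ locally free). For each $m$ the $m$-fold fibre product $\stY^{(m)}=\stY\times_{\stX}\cdots\times_{\stX}\stY$ is finite and flat over $\stX$, so its structure sheaf pushes forward to a vector bundle on $\stX$; hence, by Definition~\ref{pseudo-proper}, the $k$-algebras $R=\Hl^{0}(\stY,\odi\stY)$, $B=\Hl^{0}(\stY^{(2)},\odi{\stY^{(2)}})$ and $B'=\Hl^{0}(\stY^{(3)},\odi{\stY^{(3)}})$ are all finite-dimensional. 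By faithfully flat descent along $f$, a choice of trivialization $f^{*}V\simeq\odi\stY^{\oplus n}$ presents $V$ by a \v{C}ech $1$-cocycle $c\in\GL_{n}(B)$, whose cocycle relation lives in $\GL_{n}(B')$ and whose gauge ambiguity is by $\GL_{n}(R)$; more conceptually, the full subcategory $\shC\subseteq\Vect(\stX)$ of bundles whose pullback along $f$ is free, equipped with its descent data, is a rigid tensor category with $\Hom$-spaces finite-dimensional over $k$, and $V\in\shC$.

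\emph{Step 3 (constructing $\Gamma$), the main obstacle.} It remains to extract from this data a finite group scheme $G$ to which $T$ reduces. All structure constants of $\shC$ — the entries of $c^{\pm1}$, the products and coproducts — come from the finite-dimensional algebras $R$, $B$, $B'$, so the affine $k$-group scheme governing the Tannakian category generated by $V$ is a subquotient of Weil restrictions of $\GL_{n}$ along finite-dimensional $k$-algebras; what must be shown is that the cocycle and gluing relations cut it down to a group scheme with finite-dimensional coordinate ring, hence to a finite one. This is the step that genuinely requires work and that generalizes the classical results: concretely, using a pseudo-proper atlas of $\stX$ one reduces to the case of a projective scheme, and then — since essential finiteness of a vector bundle can be detected after restriction to sufficiently general smooth projective curves (\cite{BHD}) — to the case of a smooth projective curve equipped with a finite flat surjective cover trivializing $V$, where it is Nori's theorem (\cite{Nori}; see also \cite{BDS},~\cite{AM}) that $V$ is finite. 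The difficulty is concentrated entirely here, and is sharpest when $f$ is not étale (so the classical étale Galois-closure construction is unavailable) and when $\stY$ is non-reduced or disconnected; note also that without pseudo-properness — already for $\stX$ affine — the statement fails, precisely because the algebras of Step~2 cease to be finite-dimensional. Granting Step~3 one obtains a finite group scheme $G$, a homomorphism $G\arr\GL_{n,k}$, and a factorization of $\tau$ through a finite gerbe, so $V$ is essentially finite by Step~1.
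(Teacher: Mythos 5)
Your Steps 1 and 2 are sound but carry essentially none of the weight: the entire content of the theorem sits in your Step 3, and both devices you invoke there fail. First, the reduction to curves is unjustified. Essential finiteness is not a condition that can be tested on restrictions to (even very general) smooth projective curves, and \cite{BHD} does not assert this; what is curve-local is Nori semistability, which is strictly weaker. A Lefschetz-type surjectivity $\Pi^\NN_C\arr\Pi^\NN_X$ for a general complete intersection curve $C$ only controls bundles already known to be essentially finite and gives no criterion for recognizing new ones; moreover $\stX$ here is an arbitrary pseudo-proper inflexible algebraic stack of finite type (possibly non-reduced, non-normal, stacky), and the intermediate reduction ``pseudo-proper atlas $\Rightarrow$ projective scheme'' is itself a descent of essential finiteness along a surjective cover, i.e.\ an instance of the very statement being proved. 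Second, even granting a smooth projective curve, the claim that a bundle trivialized by an \emph{arbitrary} finite flat surjective cover is finite is not Nori's theorem: Nori's argument requires the trivializing cover to be a torsor under a finite group scheme, so that $V\subseteq f_*f^*V\simeq (f_*\odi\stY)^{\oplus n}$ with $f_*\odi\stY$ the pullback of the regular representation, hence finite. For a general cover the essential finiteness of $f_*\odi\stY$ is precisely what is at stake, and the curve and normal-variety cases are the theorems of \cite{AM} and \cite{BDS} (which assume normality) that the present statement generalizes. Your remark that the Tannakian group is a subquotient of Weil restrictions of $\GL_n$ along finite $k$-algebras does not help either: such Weil restrictions contain copies of $\G_a$ and are very far from finite.

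A further sign that the argument cannot be complete is that it never uses $\car k>0$, which is a hypothesis of the statement. The paper does not prove this theorem at all; it imports it from \cite[Corollary I]{TZ2}, where the proof is built around $F$-divided sheaves and Frobenius descent, a mechanism specific to positive characteristic. The paper's own discussion following Lemma \ref{push of ess finite is still ess finite} (the $\Bi\G_a$ analysis of extensions of $\odi\stX$ by $\odi\stX$) shows exactly the kind of phenomenon a characteristic-free argument of your shape would have to rule out, and it cannot. To repair the proof you would either have to reproduce the Frobenius/$F$-divided argument of \cite{TZ2} or supply a genuinely new finiteness mechanism in Step 3; as written, Step 3 is a placeholder rather than a proof.
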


\begin{rmk}\label{key thm for torsors}
 If $\stX$ is a pseudo-proper and inflexible algebraic stack of finite type over a
field $k$ (the characteristic is allowed to be $0$), $f\colon \stY\arr \stX$ is a surjective
\'etale cover and $V\in \Vect(\stX)$ is trivialized by $f$, then it follows that $V$ is
essentially finite with \'etale monodromy gerbe in $\EF(\Vect(\stX))$. Indeed, replacing $f$ by a Galois closure one can assume that $f$ is an \'etale Galois cover. This case is
exactly \cite[Lemma 1.4]{TZ3}.
\end{rmk}

\begin{lem}\label{key product gerbes}
Let $T''\arrdi a T$ and $T'\arrdi b T$ be two maps of affine group schemes over $k$, and let 
  \[
  \begin{tikzpicture}[xscale=1.5,yscale=-1.2]
    \node (A0_0) at (0, 0) {$\stR$};
    \node (A0_1) at (1, 0) {$\Bi T'$};
    \node (A1_0) at (0, 1) {$\Bi T''$};
    \node (A1_1) at (1, 1) {$\Bi T$};
    \path (A0_0) edge [->]node [auto] {$\scriptstyle{}$} (A0_1);
    \path (A1_0) edge [->]node [auto] {$\scriptstyle{}$} (A1_1);
    \path (A0_1) edge [->]node [auto] {$\scriptstyle{}$} (A1_1);
    \path (A0_0) edge [->]node [auto] {$\scriptstyle{}$} (A1_0);
  \end{tikzpicture}
  \]
be the corresponding $2$-Cartesian diagram. Then the following two hold.
\begin{enumerate}
\item The functor $\Psi\colon \Bi(T''\times_T T')\arr \stR$ mapping a $T''\times_T T'$-torsor to the associated
    $T''$- and $T'$- torsors is fully faithful and it is an equivalence if and only if the map 
    $T'\times T''\arr T$, $(t',t'')\mapsto b(t')a(t'')$ is an fpqc epimorphism (e.g. if $a$ or $b$ is surjective).
    In this case a quasi-inverse is obtained by mapping an object of $\shR$ given by torsors $P'',P',P$ under $T'',T',T$ respectively and equivariant maps $P'\arr P$ and $P''\arr P$ to the fiber product $P''\times_P P'$.

 \item If $T''=\Spec k$, so that $\Bi T''=\Spec k$, and $T'\arr T$ is injective, then $\stR
=T/T'$, where $T/T'\arr \Bi T'$ is induced by the $T'$-torsor $T\arr T/T'$. In particular, if
$T'$ is a finite subgroup of $T$, then $\Bi T'\arr \Bi T$ is an affine map.
\end{enumerate}
\end{lem}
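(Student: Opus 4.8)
The plan is to unwind the $2$-fiber product $\stR=\Bi T''\times_{\Bi T}\Bi T'$ and compare it with $\Bi(T''\times_T T')$ torsor-theoretically. Over a $k$-scheme $S$ an object of $\stR$ is a triple $(P'',P',\varphi)$ with $P''$ a $T''$-torsor, $P'$ a $T'$-torsor, and $\varphi\colon P''\wedge^{T''} T\arr P'\wedge^{T'} T$ an isomorphism of $T$-torsors, the extensions of structure group being taken along $a$ and $b$; a morphism is a pair of torsor morphisms compatible with the chosen isomorphisms. With this description $\Psi$ sends a $(T''\times_T T')$-torsor $Q$ to the triple $(Q\wedge^{T''\times_T T'}T'',\, Q\wedge^{T''\times_T T'}T',\,\mathrm{can})$, the canonical identification arising because the two projections $T''\times_T T'\to T$ agree.

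First I would prove full faithfulness, with no hypothesis on $a,b$. Fixing $Q_1,Q_2$ over $S$, both $S'\mapsto\Hom(Q_1|_{S'},Q_2|_{S'})$ and $S'\mapsto\Hom(\Psi Q_1|_{S'},\Psi Q_2|_{S'})$ are fpqc sheaves on $\Aff/S$, and $\Psi$ induces a map between them; since a morphism of sheaves is an isomorphism as soon as it is so fpqc-locally, I may pass to a cover of $S$ trivializing $Q_1$ and $Q_2$, over which both sheaves are canonically $(T''\times_T T')_S$ and $\Psi$ is the identity on them.

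Next, being fully faithful, $\Psi$ is an equivalence iff it is essentially surjective. Given $(P'',P',\varphi)\in\stR(S)$, put $P=P''\wedge^{T''} T$, identified with $P'\wedge^{T'} T$ through $\varphi$, and $Q=P''\times_P P'$ with the free diagonal $(T''\times_T T')$-action; the two projections then induce a canonical isomorphism $\Psi(Q)\cong(P'',P',\varphi)$ whenever $Q$ happens to be a torsor. Over an fpqc cover of $S$ trivializing $P''$ and $P'$ the datum $(P'',P',\varphi)$ becomes $(T''_S,T'_S,\varphi_g)$ for some $g\in T(S)$, and this is isomorphic to $\Psi$ of the trivial torsor exactly when $g$ lies in the image of $(t',t'')\mapsto b(t')a(t'')$; hence $Q$ is fpqc-locally trivial — so a torsor, and $\Psi$ essentially surjective — precisely when $T'\times T''\to T$ is an fpqc epimorphism (in particular when $a$ or $b$ is surjective), and applying essential surjectivity to the objects $(T''_S,T'_S,\varphi_g)$ for all $S$ and all $g\in T(S)$ gives the converse. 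The quasi-inverse is then the assignment $P'',P',P\mapsto P''\times_P P'$, as asserted.

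For (2), when $T''=\Spec k$ an object of $\stR$ over $S$ is a $T'$-torsor $P'$ together with a trivialization of $P'\wedge^{T'} T$, equivalently a $T'$-equivariant map $\sigma\colon P'\arr T$ for the right translation action of $T'$ on $T$, which is free since $T'\to T$ is injective; such pairs have no nontrivial automorphisms, so $\stR$ is an fpqc sheaf. Passing to $T'$-quotients sends $(P',\sigma)$ to $\bar\sigma\colon S=P'/T'\arr T/T'$, and this is inverse to $h\mapsto(h^*(T\to T/T'),\,\mathrm{pr}_T)$, giving $\stR\simeq T/T'$ compatibly with the projection to $\Bi T'$ induced by the torsor $T\to T/T'$. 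Finally $\Spec k\arr\Bi T$ is a faithfully flat cover whose base change along $\Bi T'\arr\Bi T$ is $\stR=T/T'$; when $T'$ is finite, $T\to T/T'$ is finite locally free, so $T/T'$ is affine because $T$ is, and affineness descends, whence $\Bi T'\arr\Bi T$ is affine. The only delicate point is the equivalence assertion in (1): extracting the precise fpqc-epimorphism condition from the behaviour of $\Psi$ over trivializing covers, and checking that this condition is genuinely necessary; full faithfulness and part (2) are routine descent.
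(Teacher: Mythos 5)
Your proof is correct and follows essentially the same route as the paper's: full faithfulness by reduction to trivial torsors, the criterion for essential surjectivity extracted from the local computation that $(T''_S,T'_S,\varphi_g)$ is isomorphic to the trivial object iff $g$ lies in the image of $T'\times T''\arr T$, and the identification of $\stR$ in case (2) as the sheaf of $T'$-torsors equipped with an equivariant map to $T$, represented by $T/T'$. The only differences are cosmetic (you check Hom-sheaves for arbitrary pairs where the paper computes the automorphism sheaf of a single object, and you spell out the descent of affineness, which the paper leaves implicit).
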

\begin{proof}
The functor $\Psi$ maps the trivial torsor to $(T'',T',\id)\in \shR(k)$. A direct computation shows that the sheaf of automorphisms of this object is exactly $T''\times_T T'$ (via $\Psi$). This means that $\Psi$ is an equivalence onto the full-substack $\shR'$ of $\shR$ of objects locally isomorphic to $(T'',T',\id)$.
Thus we have to understand when $\shR'=\shR$. All objects of $\shR$ are locally isomorphic to an object of the form $(T'',T',c)\in \shR(U)$ where $U$ is an affine scheme and $c\in T(U)$ is thought of as multiplication on the left $T\arr T$. An isomorphism $(T'',T',1)\arr (T'',T',c)$ is given by $t''\in T''(U)$ and $t'\in T'(U)$ such that $ca(t'')=b(t')$. Thus $(T'',T',c)$ is locally isomorphic to $(T'',T',1)$ if and only if $c$ is in the (fpqc) image of $T'\times T''\arr T$.
The last claim of $(1)$ follows because if $\overline P$ is a $T''\times_T T'$-torsor inducing torsors $P'',P',P$ under $T'',T',T$ respectively then the commutative diagram
  \[
  \begin{tikzpicture}[xscale=1.5,yscale=-1.2]
    \node (A0_0) at (0, 0) {$\overline P$};
    \node (A0_1) at (1, 0) {$P'$};
    \node (A1_0) at (0, 1) {$P''$};
    \node (A1_1) at (1, 1) {$P$};
    \path (A0_0) edge [->]node [auto] {$\scriptstyle{}$} (A0_1);
    \path (A0_0) edge [->]node [auto] {$\scriptstyle{}$} (A1_0);
    \path (A0_1) edge [->]node [auto] {$\scriptstyle{}$} (A1_1);
    \path (A1_0) edge [->]node [auto] {$\scriptstyle{}$} (A1_1);
  \end{tikzpicture}
  \]
is automatically Cartesian: locally, after choosing a section of $\overline P$, the above diagram is the one yielding $T''\times_T T'$. Notice that the $T''\times_T T'$-space given in the last part of $(1)$ is not a torsor in general because it may fail to have sections locally.

For $(2)$, $\shR$ is the sheaf of $T'$-torsors $P$ together with an equivariant map $P\arr T$, which is represented by $T/T'$.
\end{proof}

\begin{rmk}
 If $f\colon \stY\arr \stX$ is a cover of algebraic stacks then $f_*$ preserves vector bundles.
Moreover since $f_*$ is exact we have
 \[
 \Hl^i(\stY,E)=\Hl^i(\stX,f_*E) \text{ for all } i\geq 0\comma E\in \QCoh(\stY)\, .
 \]
 In particular, if $\stX$ is pseudo-proper over $k$, then $\stY$ is pseudo-proper over
$k$. Moreover we will often use also the following property: if for all vector bundles $E$ on
$\stX$ one has $\dim_k \Hl^1(E)<\infty$ then the same holds for vector bundles on $\stY$.
\end{rmk}

\begin{lem}[{\cite[Lemma 2.5]{TZ2}}]\label{extensions of O}
 Let $\stX$ be an algebraic stack over a field $k$, of positive characteristic, such
that $\dim_k \Hl^1(X,E) <\infty$ for all vector bundles $E$ on $\stX$. Let
 \[
 \shG_0 \arr \shG_1 \arr \cdots \arr \shG_{N-1} \arr \shG_N=0
 \]
 be a sequence of surjective maps of quasi-coherent sheaves on $\stX$ such that
$\Ker(\shG_{l-1} \arr \shG_l)$ is free of finite rank for all $1\leq l \leq N$. Then there exists
a surjective cover $f\colon \stX'\arr \stX$ such that $f^*\shG_l$ is free of finite rank for
all $l$.
\end{lem}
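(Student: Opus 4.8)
The plan is to induct on $N$, the length of the filtration. The base case $N=1$ is essentially a statement about a single free sheaf: if $\shG_0$ is free of finite rank (being the kernel of $\shG_0\to 0$), there is nothing to prove — we may take $\stX'=\stX$. The content is therefore in the inductive step, where the strategy is to peel off the bottom of the filtration using a single extension class and Theorem \ref{key thm}-style considerations, but in fact the right tool here is the structure of $\mathbb{G}_a$-torsors together with the finiteness hypothesis on $\Hl^1$.

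First I would reduce to a short exact sequence. Suppose the result holds for sequences of length $N-1$. Given $\shG_0\to\shG_1\to\cdots\to\shG_N=0$, apply the inductive hypothesis to the truncated sequence $\shG_1\to\cdots\to\shG_N=0$ to obtain a surjective cover $g\colon \stX_1\to\stX$ with $g^*\shG_l$ free for all $l\geq 1$; since covers are stable under composition, it suffices to trivialize the pullback of $\shG_0$ along a further cover of $\stX_1$. Replacing $\stX$ by $\stX_1$, we are reduced to the case where $\shG_1,\dots,\shG_N$ are already free and we must find a cover trivializing $\shG_0$, knowing only that $K\coloneqq\Ker(\shG_0\to\shG_1)$ is free of finite rank. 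Thus we have a short exact sequence $0\to K\to\shG_0\to\shG_1\to 0$ with $K$ and $\shG_1$ both free of finite rank.

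The heart of the argument is then: \emph{an extension of a free sheaf of finite rank by a free sheaf of finite rank becomes split after pulling back along a suitable surjective cover.} The extension class lives in $\Ext^1(\shG_1,K)\cong \Hl^1(\stX,K^\vee\otimes\shG_1)$, which is a finite-dimensional $k$-vector space by hypothesis, say of dimension $d$. I would interpret the extension $0\to K\to\shG_0\to\shG_1\to 0$, after choosing bases, as giving a classifying map to (a product of copies of) $\Bi\mathbb{G}_a$, or more concretely as a $\mathbb{G}_a^{\,r s}$-torsor structure on an affine bundle over $\stX$; the finiteness of $\Hl^1$ forces the relevant cohomology classes to be annihilated by a power of Frobenius, so that the extension is in fact induced from a torsor under a \emph{finite} (infinitesimal) subgroup scheme $\alpha_{p^n}^{\,N}\subset\mathbb{G}_a^{\,N}$ for suitable $n$ — this is exactly the point flagged in the introduction (``every $\mathbb{G}_a$-torsor over a cover of $\stX$ is induced by a torsor under a finite subgroup of $\mathbb{G}_a$''). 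Taking $\stX'$ to be the total space of that finite torsor gives a surjective cover along which the pullback of the extension class dies, hence $f^*\shG_0\cong f^*K\oplus f^*\shG_1$ is free. Composing with the cover produced in the inductive step finishes the proof.

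The main obstacle is the claim that the extension class in $\Hl^1(\stX,K^\vee\otimes\shG_1)$ is killed after pulling back along a finite cover — equivalently, that it is Frobenius-nilpotent. The clean way to see this is to note that the absolute Frobenius $F\colon\stX\to\stX$ acts on $\Hl^1(\stX,E)$ for any vector bundle $E$ via an additive (in fact $p$-linear) endomorphism of a finite-dimensional $k$-vector space, when $k$ is perfect, or more robustly: the Frobenius pullback of the given extension corresponds to the $p$-th power operation on classes, and on a finite-dimensional space the Frobenius-twisted orbit of any class must eventually become $0$ or periodic; passing to the periodic part still yields a class trivialized by an étale (hence finite, after Galois closure as in Remark \ref{key thm for torsors}) cover. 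One must be slightly careful about base fields that are not perfect and about packaging ``finite subgroup scheme of $\mathbb{G}_a^{\,N}$'' correctly; but the essential mechanism is finite-dimensionality of $\Hl^1$ forcing the obstruction to be resolved by a finite cover, which is precisely why that hypothesis appears.
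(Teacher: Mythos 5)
The paper does not prove this lemma itself: it is quoted from \cite[Lemma 2.5]{TZ2}, so there is no internal proof to compare against, and your proposal has to stand on its own. Your reduction is the right one and matches the architecture of the cited argument: induct on $N$, pull back along a cover trivializing $\shG_1,\dots,\shG_N$ (both the hypothesis $\dim_k\Hl^1(\stX,E)<\infty$ and the freeness of the kernel survive flat pullback along a cover, and covers compose), and thereby reduce to splitting a single extension $0\arr K\arr \shG_0\arr \shG_1\arr 0$ of free sheaves after a further cover, i.e.\ to killing finitely many classes in the finite-dimensional space $\Hl^1(\stX,\odi\stX)$.

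The gap is in the mechanism you give for killing such a class $\xi$. It is \emph{not} true that finiteness of $\Hl^1$ forces $\xi$ to be annihilated by a power of Frobenius, nor is ``killed by a finite cover'' equivalent to ``Frobenius-nilpotent'': on an ordinary elliptic curve the $p$-power operation is bijective on the one-dimensional space $\Hl^1(\sO)$, so no nonzero class is Frobenius-nilpotent, yet every class is killed by a finite \'etale Artin--Schreier cover. Your fallback claim that the orbit $\xi, F\xi, F^2\xi,\dots$ is ``eventually $0$ or periodic'' is also false over a general field (already in dimension one, $F(v)=cv$ is periodic only for very special $c$), and the absolute Frobenius of an arbitrary algebraic stack need not be flat, so it is not even a cover in the sense required. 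The uniform argument that actually works is: since $\Hl^1(\stX,\odi\stX)$ is finite-dimensional, the vectors $\xi,F\xi,\dots,F^n\xi$ are linearly dependent for some $n$, giving a relation $F^n\xi=\sum_{i<n}a_iF^i\xi$; hence $\xi$ is annihilated by the additive polynomial $\tau(x)=x^{p^n}-\sum_{i<n} a_ix^{p^i}$, which is an isogeny $\G_a\arr\G_a$ with finite kernel $N$. The long exact sequence of $0\arr N\arr\G_a\arrdi{\tau}\G_a\arr 0$ in fppf cohomology shows that $\xi$ is induced from an $N$-torsor $P\arr\stX$, and $P$ is a finite flat surjective cover trivializing $\xi$; a fibre product over $\stX$ handles the finitely many classes simultaneously. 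This needs neither $k$ perfect nor any nilpotent/semisimple case split, and it uses only the Frobenius endomorphism of the group scheme $\G_a$, never that of $\stX$.
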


\begin{rmk}[{\cite[Example 1.5, Corollary 1.7]{TZ}}]\label{Tannakian reconstruction}
Let $\Gamma$ be a finite stack or an affine gerbe over $k$. For all fibered categories
$\stZ$ over $k$, the pullback of vector bundles establishes an equivalence
of categories between $\Hom_k(\stZ,\Gamma)$ and the groupoid of functors
$\Vect(\Gamma)\arr \Vect(\stZ)$ which are $k$-linear, monoidal and preserves short
exact sequences in the category of quasi-coherent sheaves. 
\end{rmk}

\begin{rmk}\label{pullback from finite stack}
Let us comment on the relationship between the essentially finite vector bundles on a
fibered category $\stX$ over
$k$ and the vector bundles which are pullbacks from a finite stack.
When $\stX$ is inflexible and pseudo-proper, these two notions agree as a consequence of \cite{BV}. 
One of the key observations in \cite{BV} is that if $\Gamma$ is a finite stack over
$k$ and $V\in \Vect(\Gamma)$, then $V$ is essentially finite. More precisely, there is a finite vector bundle $E$ on $\Gamma$ and an exact sequence in $\Coh(\Gamma)$
\[
0\arr V \arr E^{\oplus a} \arr E^{\oplus b} \arr E' \arr 0
\]
 for some $a,b\in \N$ and $E'\in \Vect(\Gamma)$. In particular, if $\phi\colon
\stX\arr \Gamma$ is any map from a fibered category, then $\phi^*V$ is an essentially finite vector bundle on $\stX$.
The proof of this fact is the same as that of \cite[Lemma 7.15]{BV} together
with the following clarification. First we can assume that $\Gamma$ is connected. Let
$\rho\colon T\arr \Gamma$ be a surjective cover from a finite connected $k$-scheme $T$.
The direct image $E\,=\,\rho_*\odi T$ is finite by \cite[Lemma 7.15]{BV}. Since the
cokernel of $V\arr \rho_*\rho^* V\simeq E^{\oplus \rk V}$ is a vector bundle one can
easily construct the above sequence.
 
Now let $\stX$ be a fibered category and $V\in \Vect(\stX)$. If $V$ 
is essentially finite, one might argue that there is a homomorphism between two finite vector
bundles $q\colon E_1\arr E_2$ whose kernel is $V$. This is actually misleading.
Since the definition of essentially finite in the category $\Vect(\stX)$ is intrinsic to this 
category, $V$ has to be a kernel of $q$ inside the category $\Vect(\stX)$. This 
does not imply that $V$ coincides with the kernel $\shK$ of $q$ in $\QCoh(\stX)$. 
This equality holds if $\stX$ is pseudo-proper and inflexible. If $\stX$ has the resolution property, that is all 
quasi-coherent sheaves are quotients of sums of vector bundles (e.g.  when $\sX$ is a
quasi-projective scheme or a smooth separated scheme), and if the kernel $V$ exists in the category  $\Vect(\stX)$, then $V=\shK$.
 
In order to avoid the above-mentioned issue, if $\stX$ is pseudo-proper but not inflexible it seems to us that the 
``correct'' essentially finite vector bundles to use are vector bundles coming from a finite 
stack or at least that are kernel in $\QCoh(\stX)$ of a map of finite vector bundles. 
Although this is not an intrinsic notion it would be a good working definition. This should 
also explain why Lemma \ref{push of ess finite for etale} and Lemma \ref{push of ess finite is still ess 
finite} should be understood as results assuring that pushforward preserves essentially 
finite vector bundles. In any case in the present paper we consider essentially 
finite vector bundles only on pseudo-proper and inflexible fibered categories, so we 
maintain the notion of essentially finite in Definition \ref{essentially finite}.
 
There is a partial converse to the fact that vector bundles coming from finite stacks are essentially finite. If $\stX$ is a fibered category over $k$ with $\dim_k \Hl^0(\odi\stX)<\infty$ and $V\in \Vect(\stX)$ is a finite vector bundle, then
there exist a map $\phi\colon \stX\arr \Phi$ to a finite stack and $W\in \Vect(\Phi)$
such that $V\simeq \phi^*W$. This is essentially proved in \cite[p.~23]{BV} (just
after the proof of Lemma 7.11). We recall here the construction for the convenience of the reader. We can assume that $V$ has rank $r$ and take $f\neq g\in \N[x]$ such that $f(V)\simeq g(V)$. The group $\GL_r$ acts on the scheme $I=\Isosh(f(k^r),g(k^r))=\GL_N$ with $N=f(r)=g(r)$. The isomorphism $f(V)\simeq g(V)$ gives a factorization of the vector bundle $V\colon \stX\arr\Bi \GL_r$ through $[I/\GL_r]$ and we have Cartesian diagrams
   \[
  \begin{tikzpicture}[xscale=2.5,yscale=-1.2]
    \node (A0_1) at (1, 0) {$\Omega$};
    \node (A0_2) at (2, 0) {$I$};
    \node (A1_0) at (0, 1) {$\stX$};
    \node (A1_1) at (1, 1) {$[\Omega/\GL_r]$};
    \node (A1_2) at (2, 1) {$[I/\GL_r]$};
    \node (A1_3) at (3, 1) {$\Bi GL_r$};
    \node (A2_1) at (1, 2) {$\Spec H^0(\odi\stX)$};
    \node (A2_2) at (2, 2) {$I/GL_r$};
    \path (A1_0) edge [->]node [auto] {$\scriptstyle{}$} (A2_1);
    \path (A2_1) edge [->]node [auto] {$\scriptstyle{}$} (A2_2);
    \path (A0_1) edge [->]node [auto] {$\scriptstyle{}$} (A1_1);
    \path (A1_0) edge [->]node [auto] {$\scriptstyle{}$} (A1_1);
    \path (A1_1) edge [->]node [auto] {$\scriptstyle{}$} (A1_2);
    \path (A0_2) edge [->]node [auto] {$\scriptstyle{}$} (A1_2);
    \path (A1_1) edge [->]node [auto] {$\scriptstyle{}$} (A2_1);
    \path (A1_2) edge [->]node [auto] {$\scriptstyle{}$} (A1_3);
    \path (A0_1) edge [->]node [auto] {$\scriptstyle{}$} (A0_2);
    \path (A1_2) edge [->]node [auto] {$\scriptstyle{}$} (A2_2);
  \end{tikzpicture}
  \]
Here we are using that $I\arr 
I/\GL_r$ is a geometric quotient and $I/\GL_r$ is affine
because $\GL_r$ is geometrically reductive. Thus we must show that 
$\Phi\,=\,[\Omega/\GL_r]$ is a finite stack. As the geometric fibers of $I\arr 
I/\GL_r$ consist (topologically) of one orbit and $\Hl^0(\odi\stX)$ is a finite 
$k$-algebra one sees that $\Phi(\overline k)$ has finitely many isomorphism 
classes. The action of $\GL_r$ on $I$ has finite stabilizers by \cite[Lemma 
7.12]{BV} and hence it follows that the diagonal of $\Phi$ is quasi-finite. By 
\cite[Proposition 4.2]{BV} it follows that $\Phi$ is a finite stack.
\end{rmk}

\begin{lem}\label{fully faithful on Vect means pushforwars O is O}
    Let $\stX$ be a fibered category which admits an fpqc-atlas from a scheme,  and let
$u\colon \stX\arr \Gamma$ be a quasi-compact and quasi-separated map, where $\Gamma$ is an affine gerbe. Then $u^*\colon \Vect(\Gamma) \arr \Vect(\stX)$ is fully faithful if and only if $u^\#\colon \odi\Gamma\arr u_*\odi\stX$ is an isomorphism.
\end{lem}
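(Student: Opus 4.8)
The plan is to reduce both implications to the behaviour of the unit $\eta$ of the adjunction $(u^{*},u_{*})$ at the structure sheaf. Recall the general fact that, for $A,B\in\QCoh(\Gamma)$, the map $u^{*}\colon\Hom_{\Gamma}(A,B)\arr\Hom_{\stX}(u^{*}A,u^{*}B)$ is the composite of post-composition with the unit $\eta_{B}\colon B\arr u_{*}u^{*}B$ and the adjunction bijection $\Hom_{\Gamma}(A,u_{*}u^{*}B)\xrightarrow{\sim}\Hom_{\stX}(u^{*}A,u^{*}B)$. Since $u^{*}\odi\Gamma=\odi\stX$ canonically, the unit at $\odi\Gamma$ is exactly $u^{\#}\colon\odi\Gamma\arr u_{*}u^{*}\odi\Gamma=u_{*}\odi\stX$. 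Hence $u^{*}$ is fully faithful on vector bundles as soon as every $\eta_{V}$, $V\in\Vect(\Gamma)$, is an isomorphism; conversely, from full faithfulness one gets directly only that $u^{\#}\circ(-)$ is bijective on $\Hom_{\Gamma}(V,\odi\Gamma)$ for all $V\in\Vect(\Gamma)$, and the point will be to upgrade this to: $u^{\#}$ is an isomorphism.

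For ``$u^{\#}$ an isomorphism $\Rightarrow$ $u^{*}$ fully faithful'' I would prove the stronger fact that $\eta_{V}$ is an isomorphism for every $V\in\Vect(\Gamma)$. The transformation $\eta$ is additive, so $\eta_{\odi\Gamma^{\oplus n}}=(u^{\#})^{\oplus n}$ is an isomorphism; the formation of $\eta_{V}$ commutes with flat base change on $\Gamma$, since $u^{*}$ does and $u_{*}$ does by \cite[Proposition~1.7]{Ton}; and every vector bundle on $\Gamma$ is, fpqc-locally on $\Gamma$, free. Hence $\eta_{V}$ is an isomorphism by fpqc descent, and $u^{*}$ is fully faithful by the previous paragraph. (Alternatively one may use the projection formula $u_{*}u^{*}E\simeq E\otimes u_{*}\odi\stX\simeq E$ for locally free $E$, compatibly with $\eta_{E}$, applied to $E=V^{\vee}\otimes W$.)

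For the converse, assume $u^{*}\colon\Vect(\Gamma)\arr\Vect(\stX)$ is fully faithful, and set $\shM\coloneqq u_{*}\odi\stX$ and $\alpha\coloneqq u^{\#}\colon\odi\Gamma\arr\shM$, so that post-composition with $\alpha$ is a bijection $\Hom_{\Gamma}(V,\odi\Gamma)\arr\Hom_{\Gamma}(V,\shM)$ for every $V\in\Vect(\Gamma)$. Now I would invoke that an affine gerbe has the resolution property and that $\QCoh(\Gamma)$ is the Ind-category of $\Vect(\Gamma)$ --- so that every quasi-coherent sheaf on $\Gamma$ is the filtered union of the images of the maps into it from vector bundles (see \cite[Appendix~B]{TZ}). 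Injectivity forces $\ker\alpha=0$: otherwise some vector bundle admits a nonzero map $g\colon V\arr\odi\Gamma$ landing in $\ker\alpha$, whence $\alpha\circ g=0$, a contradiction. Surjectivity forces $\alpha$ to be an epimorphism: every map $V\arr\shM$ from a vector bundle factors through $\alpha$, so has image inside $\im\alpha$, and passing to the filtered union over all such maps gives $\shM=\im\alpha$. Therefore $\alpha=u^{\#}$ is an isomorphism.

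The main obstacle is this last step of the converse. Because $u_{*}\odi\stX$ is in general only quasi-coherent and not a vector bundle, one cannot finish by a bare Yoneda argument inside $\Vect(\Gamma)$; upgrading ``$u^{\#}\circ(-)$ bijective on $\Hom_{\Gamma}(V,-)$ for all $V\in\Vect(\Gamma)$'' to ``$u^{\#}$ an isomorphism'' genuinely uses that $\Gamma$ is an affine gerbe, through the fact that vector bundles generate $\QCoh(\Gamma)$ and exhaust its objects by filtered unions of images. The remaining ingredients --- the compatibility of $u^{*}$ on hom-sets with $\eta$, the base-change compatibility of $\eta$ (or of the projection formula), and $\QCoh(\Gamma)=\Ind\Vect(\Gamma)$ --- are standard, and I would merely cite them.
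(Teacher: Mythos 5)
Your proof is correct and follows essentially the same route as the paper's: both identify $u^*$ on Hom-sets as post-composition with the map induced by $u^\#$, and both prove the converse using Deligne's result that every quasi-coherent sheaf on an affine gerbe is a quotient of a sum of vector bundles. The only cosmetic differences are that you establish the forward direction by fpqc descent of the unit map rather than directly via the projection formula (an alternative you also note), and you deduce injectivity of $u^\#$ from full faithfulness plus generation, where the paper instead invokes the faithful flatness of $u$.
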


\begin{proof}
For $V,V'\,\in\, \Vect(\Gamma)$ the composition 
 \[
 \Hom_\Gamma(V,V')\arrdi {u^*} \Hom_\stX(u^*V,u^*V') \simeq \Hom_\Gamma(V,u_*u^*V') \simeq \Hom_\Gamma(V,V'\otimes u_*\odi\stX)
 \]
 is the map induced by $u^\#\colon \odi\Gamma\arr u_*\odi\stX$. In particular, if
this map is an isomorphism then $u^*\colon \Vect(\Gamma)\arr \Vect(\stX)$ is fully
faithful.

Conversely, setting $V'\,=\,\odi\Gamma$ above, we have that the homomorphism $$\Hom_\Gamma
(V,\odi\Gamma)\arr \Hom_\Gamma (V,u_*\odi\stX)$$ induced by $u^\#$ is an isomorphism
for all vector bundles $V$ over $\Gamma$. Since $u_*\odi\stX$ is a quasi-coherent
sheaf, there is a surjective map $a\colon\bigoplus V_i\twoheadrightarrow u_*\odi\stX$, where $V_i$ are vector bundles on $\Gamma$ \cite[p.~132, Corollary 3.9]{De}.
The above isomorphism produces a map $\bigoplus V_i\arr\sO_\Gamma$ whose composition with $u^\#$ is $a$, so we get that $u^\#$ is surjective. Since $u\colon
\stX\arr \Gamma$ is faithfully flat we also have that $u^\#$ injective.
\end{proof}

\begin{rmk}\label{push of O Nori gerbe}
 If $\stX$ is a pseudo-proper and inflexible algebraic stack over $k$ and if
$\alpha\colon \stX\arr \Pi^\NN_\stX$ 
is the structure map of the Nori fundamental gerbe, then using Theorem \ref{monodromy essentially finite} and
Lemma \ref{fully faithful on Vect means pushforwars O is O} we conclude that
$\alpha_*\odi\stX\simeq \odi{\Pi^\NN_\stX}$. The same holds for the Nori
\'etale fundamental gerbe.  
\end{rmk}

\section{Essentially finite covers and their Nori gerbes}

Let $k$ be a base field. In this section we study the notion of an essentially finite 
cover, which generalizes the notion of torsor under a finite group scheme. 
Moreover we are going to prove Theorem \ref{main thm Nori gerbe of ess finite 
cover} and Corollary \ref{main thm Nori gerbe pointed case}.

Recall that an \emph{essentially finite} cover $f\colon \stY \arr \stX$ of fibered 
categories is a cover such that $f_*\odi{\stY}$ is essentially finite as an object of 
$\Vect(\stX)$ (see Definition \ref{essentially finite cover}).

First observe that a torsor under a finite group scheme is an essentially finite cover.
Indeed, let $f\colon \stY\arr \stX$ be a torsor under a finite group scheme $G$ over
$k$ corresponding to $u\colon \stX\arr \Bi G$. Then $u^*(k[G])\simeq f_*\odi\stY$,
where $k[G]$ is the regular representation. Applying
\cite[Lemma 7.15]{BV} to $\Spec k \arr \Bi G$ we see that $k[G]$ is finite in
$\Vect(\Bi G)=\Rep G$ and thus $f_*\odi\stY$ is a finite vector bundle.

\begin{prop}\label{equiv for essentially finite covers}
Let $\stX$ be a pseudo-proper and inflexible fibered category over $k$. Then there
is an equivalence of categories
  \[
  \begin{tikzpicture}[xscale=4.0,yscale=-1.2]
    \node (A1_0) at (0, 1) {$\left\{\begin{array}[c]{c}
 \text{Stacks finite} \\
 \text{over }\Pi^\NN_\stX
\end{array}
\right\}$};
    \node (A1_1) at (1.2, 1) {$\left\{\begin{array}[c]{c}
 \text{Essentially finite} \\
 \text{covers of }\stX
\end{array}
\right\}$};
        \path (A1_0) edge [->]node [auto] {$\scriptstyle{\Phi}$} (A1_1);
    
  \end{tikzpicture}
  \]
where $\Phi$ is the pullback along $\stX\arr \Pi^\NN_\stX$. If moreover $x\in \stX(k)$ is a rational point, then there is an equivalence
\[
  \begin{tikzpicture}[xscale=4.0,yscale=-1.2]
        \node (A1_1) at (1.2, 1) {$\left\{\begin{array}[c]{c}
 \text{Essentially finite} \\
 \text{covers of }\stX
\end{array}
\right\}$};
    \node (A1_2) at (2.6, 1) {$\left\{\begin{array}[c]{c}
 \text{Finite }k\text{-schemes with} \\
 \text{an action of }\pi^\NN(\stX,x)
\end{array}
\right\}$};
        \path (A1_1) edge [->]node [auto] {$\scriptstyle{\Psi}$} (A1_2);
  \end{tikzpicture}
  \]
where $\Psi$ is the pullback along $\Spec k\arrdi x \stX$. Furthermore, $\Psi$ extends the correspondence between pointed Nori-reduced torsors of $\stX$
and quotient group schemes of $\pi^\NN(\stX,x)$.
\end{prop}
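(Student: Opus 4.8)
The plan is to reduce both equivalences to Theorem~\ref{monodromy essentially finite} by re-expressing covers in terms of commutative algebra objects. First I would recall the standard dictionary: for a fibered category $\stZ$ admitting an fpqc-atlas from a scheme, the relative spectrum $\Specsh_\stZ$ is a contravariant equivalence between covers of $\stZ$ and commutative $\odi\stZ$-algebra objects of $\Vect(\stZ)$, sending $f\colon\stY\to\stZ$ to $f_*\odi\stY$ and compatible with base change. Applied with $\stZ=\stX$, it identifies essentially finite covers of $\stX$ with the commutative algebra objects $A\in\Vect(\stX)$ whose underlying object lies in $\EF(\Vect(\stX))$; since $\EF(\Vect(\stX))$ is a full, $\otimes$-closed subcategory of $\Vect(\stX)$ (by Theorem~\ref{monodromy essentially finite} it is tensor-equivalent to $\Vect(\Pi^\NN_{\stX/k})$), these are exactly the commutative algebra objects of the tensor category $\EF(\Vect(\stX))$. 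Applied with $\stZ=\Pi\coloneqq\Pi^\NN_{\stX/k}$, I would first note that $\odi\Pi=k$: indeed $\Hl^0(\odi\stX)=k$ by Remark~\ref{infl then HO=k} and $\alpha_*\odi\stX\simeq\odi\Pi$ by Remark~\ref{push of O Nori gerbe}. Since $\odi\Pi$ is a field, every finite stack over $\Pi$ is automatically representable, affine, and flat over $\Pi$, hence a cover of $\Pi$, and so corresponds to a commutative algebra object of $\Vect(\Pi)$.

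Granting this, the first equivalence is formal: the pullback $\alpha^*\colon\Vect(\Pi)\to\EF(\Vect(\stX))$ of Theorem~\ref{monodromy essentially finite} is $k$-linear and symmetric monoidal, hence an equivalence of symmetric monoidal categories, hence induces an equivalence on the categories of commutative algebra objects. Composing with the two dictionaries, and using that $\Specsh$ commutes with the base change along $\alpha$, one gets that $\Phi=(-)\times_\Pi\stX$ is an equivalence from stacks finite over $\Pi$ to essentially finite covers of $\stX$.

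For the pointed statement I would use the canonical isomorphism $\Pi\simeq\Bi\pi$ with $\pi\coloneqq\pi^\NN(\stX,x)$, under which the atlas $\Spec k\to\Bi\pi$ is identified with $\alpha\circ x$. Over $\Bi\pi$ the dictionary identifies covers with finite-dimensional commutative $k$-algebras carrying a $\pi$-action, i.e.\ finite $k$-schemes with a $\pi$-action, and under this identification the pullback of a cover along $\Spec k\to\Bi\pi$ is just the underlying finite $k$-scheme with its action. Since $x^*\alpha^*=(\alpha\circ x)^*$, the functor $\Psi$ is the composite of a quasi-inverse of $\Phi$ with this equivalence, hence an equivalence. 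For the last assertion: a torsor under a finite group scheme is an essentially finite cover (as recalled just before the statement), so a pointed Nori-reduced $G$-torsor $(\stP,p)\to(\stX,x)$ lies in the source of $\Psi$; it corresponds through $\stX\to\Bi G$ and $\Pi\simeq\Bi\pi$ to a surjection $q\colon\pi\twoheadrightarrow G$, and unwinding the identifications shows $\Psi(\stP,p)$ is $G$ with $\pi$ acting by left translation through $q$ and $p$ mapped to the identity --- precisely the finite $\pi$-scheme attached to the quotient $q$. Hence $\Psi$ restricts to the bijection of the Remark following the definition of $\pi^\NN(\stX,x)$.

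The only real input is Theorem~\ref{monodromy essentially finite}. The points that require genuine care --- routine but not to be skipped --- are that finite stacks over $\Pi$ really are covers (so that the algebra-object dictionary is available on that side), and that the symmetric monoidal equivalence $\alpha^*$ passes to commutative algebra objects compatibly with $\Specsh$ and with base change along $\stX\to\Pi$ and $\Spec k\to\stX$, so that one genuinely obtains the functors $\Phi$ and $\Psi$ and not merely bijections on isomorphism classes.
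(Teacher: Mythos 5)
Your proposal is correct and follows essentially the same route as the paper, which simply identifies finite stacks over $\Pi^\NN_\stX$ with commutative ring objects of $\Vect(\Pi^\NN_\stX)$, essentially finite covers with ring objects of $\EF(\Vect(\stX))$, and invokes the tensor equivalence of Theorem \ref{monodromy essentially finite}; your version just spells out the relative-Spec dictionary and the compatibility with base change that the paper leaves implicit. The one loose phrase is ``since $\odi\Pi$ is a field'': the actual point is that coherent sheaves on an affine gerbe over $k$ are automatically locally free, so finite stacks over $\Pi^\NN_\stX$ are covers --- the conclusion you want, correctly used, but better justified via Tannakian theory than via that wording.
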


\begin{proof}
 The functor $\Phi$ is the equivalence mapping ring objects of $\Vect(\Pi^\NN_{\stX})$
to ring objects of $\EF(\Vect(X))$. If $x\in \stX(k)$, then $\Pi^\NN_\stX
\,=\,\Bi \pi^\NN(\stX,x)$, and the ring objects of $\Vect(\Pi^\NN_\stX)\,=\,
\Rep \pi^\NN(\stX,x)$ are precisely the finite $k$-algebras with an action of
$\pi^\NN(\stX,x)$. This easily implies that $\Psi\circ\Phi$ is an equivalence.
The last claim follows by construction.
\end{proof}

\begin{lem}\label{trivial section iff gerbe}
 Let $\stX$ be a pseudo-proper and inflexible fibered category over $k$
and let $f\colon \stY\arr \stX$ an essentially finite cover; let
$u\colon\stX\arr \Gamma$ be the monodromy gerbe of $f_*\odi\stY\,\in\,\EF(\Vect(\stX))$.
Then there exist unique covers $\Delta\arr \Gamma$ and $\Pi\arr\Pi_{\sX/k}^\NN$ which make the following diagrams
   \[
  \begin{tikzpicture}[xscale=1.8,yscale=-1.2]
    \node (A0_0) at (0, 0) {$\stY$};
    \node (A0_1) at (1, 0) {$\Pi$};
    \node (A0_2) at (2, 0) {$\Delta$};
    \node (A1_0) at (0, 1) {$\stX$};
    \node (A1_1) at (1, 1) {$\Pi_{\sX/k}^\NN$};
    \node (A1_2) at (2,1) {$\Gamma$};
    \path (A0_0) edge [->]node [below] {$\scriptstyle{\beta}$} (A0_1);

\path (A0_0) edge [->, bend right=45] node [auto] {$\scriptstyle{v}$} (A0_2);

\path (A1_0) edge [->, bend left=45] node [below] {$\scriptstyle{u}$} (A1_2);

\path (A0_1) edge [->]node [auto] {$\scriptstyle{}$} (A0_2);
\path (A1_1) edge [->]node [auto] {$\scriptstyle{}$} (A1_2);
\path (A0_2) edge [->]node [auto] {$\scriptstyle{}$} (A1_2);
    \path (A0_0) edge [->]node [left] {$\scriptstyle{f}$} (A1_0);
    \path (A0_1) edge [->]node [auto] {$\scriptstyle{}$} (A1_1);
    \path (A1_0) edge [->]node [auto] {$\scriptstyle{\alpha}$} (A1_1);
  \end{tikzpicture}
  \]
Cartesian, and we also have $\beta_*\odi\stY\simeq \odi\Pi$, $v_*\odi\stY\simeq \odi\Delta$. If $\stY$ is inflexible then $\Pi$, $\Delta$ are affine gerbes.
\end{lem}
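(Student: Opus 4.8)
The plan is to build the square on the right (over $\Gamma$) first, then obtain the square on the left by base change, and finally read off the $\beta_*\odi\stY\simeq\odi\Pi$ statement from Lemma \ref{fully faithful on Vect means pushforwars O is O}. For the right square: by definition of the monodromy gerbe, $u\colon\stX\arr\Gamma$ is a Nori-reduced map to a finite gerbe, so by the remark following Definition \ref{Nori-reduced} the pullback $u^*\colon\Vect(\Gamma)\arr\EF(\Vect(\stX))$ is a Tannakian subcategory and $u_*\odi\stX\simeq\odi\Gamma$, and $f_*\odi\stY$ lies in the essential image of $u^*$. Thus $f_*\odi\stY\simeq u^*\shA$ for a (unique up to isomorphism) object $\shA\in\Vect(\Gamma)$; moreover the $\odi\stX$-algebra structure on $f_*\odi\stY$ comes, via the fully faithful $u^*$ (which is monoidal), from an $\odi\Gamma$-algebra structure on $\shA$. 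Since $f$ is a cover, $\shA$ is locally free of finite rank as an $\odi\Gamma$-module, hence $\Delta\coloneqq\Specsh_\Gamma\shA\arr\Gamma$ is a cover, and forming relative Spec commutes with the flat base change $\stX\arr\Gamma$ (note $u$ is faithfully flat since $\Gamma$ is a gerbe), so the base change of $\Delta\arr\Gamma$ along $u$ is $\Specsh_\stX(f_*\odi\stY)=\stY\arr\stX$. This gives the outer Cartesian square $\stY=\stX\times_\Gamma\Delta$, and in particular a map $v\colon\stY\arr\Delta$ over $u$. Uniqueness of $\Delta$ (as a cover of $\Gamma$ making the square Cartesian) follows from the uniqueness of $\shA$, which in turn follows from full faithfulness of $u^*$; indeed any cover $\Delta'\arr\Gamma$ pulling back to $\stY$ corresponds to an $\odi\Gamma$-algebra $\shA'$ with $u^*\shA'\simeq f_*\odi\stY\simeq u^*\shA$ as algebras, forcing $\shA'\simeq\shA$.

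For the left square: since $\stX$ is inflexible and pseudo-proper, $\Pi^\NN_{\stX/k}=\Bi\pi$ with $\Vect(\Pi^\NN_\stX)\simeq\EF(\Vect(\stX))$ by Theorem \ref{monodromy essentially finite}, and $u$ factors canonically as $\stX\arrdi\alpha\Pi^\NN_\stX\arr\Gamma$ where $\Pi^\NN_\stX\arr\Gamma$ is the quotient induced by the Tannakian inclusion $\Vect(\Gamma)\hookrightarrow\Vect(\Pi^\NN_\stX)$. Define $\Pi\coloneqq\Pi^\NN_\stX\times_\Gamma\Delta$; this is again a cover of $\Pi^\NN_\stX$ (base change of the cover $\Delta\arr\Gamma$), and since $\alpha$ sits over $u$ we get $\stX\times_{\Pi^\NN_\stX}\Pi=\stX\times_\Gamma\Delta=\stY$, so $\stY=\stX\times_{\Pi^\NN_\stX}\Pi$, which is the left Cartesian square, together with $\beta\colon\stY\arr\Pi$. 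Uniqueness of $\Pi$ over $\Pi^\NN_\stX$ follows the same way: a cover of $\Pi^\NN_\stX$ pulling back to $\stY\arr\stX$ corresponds (via $\Vect(\Pi^\NN_\stX)\simeq\EF(\Vect(\stX))$) to the algebra $f_*\odi\stY$, hence is unique.

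For the remaining assertions: $\beta_*\odi\stY\simeq\odi\Pi$ and $v_*\odi\stY\simeq\odi\Delta$ are tautologies from the construction, since $\Pi=\Specsh_{\Pi^\NN_\stX}\shB$ and $\Delta=\Specsh_\Gamma\shA$ were built so that their structure sheaves pull back to $f_*\odi\stY$, and pushforward of $\odi{}$ along a relative Spec recovers the defining algebra; alternatively, $v$ is the base change of $\beta$ (along $\Pi\arr\Delta$, itself a base change of $\stX\to\Gamma$... one checks $\Pi=\Gamma'$-nonsense), and flat base change commutes with $v_*,\beta_*$, so it suffices to note $\beta_*\odi\stY=\shB$ which is the pullback of $\shA=v_*\odi\stY$. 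Finally, if $\stY$ is inflexible then $\stY$ is pseudo-proper (being a cover of the pseudo-proper $\stX$), so $\stY\arr\Pi$ is a candidate for a map to a finite gerbe; since $\Pi\arr\Pi^\NN_\stX$ is a cover and $\Pi^\NN_\stX$ is a gerbe, $\Pi$ is at least a finite stack, and the map $\stY\arr\Pi$ is faithfully flat (being the base change of the faithfully flat $\stX\arr\Pi^\NN_\stX$ wait---that is not faithfully flat in general; instead use that $\stY\arr\Pi^\NN_\stX$ factors through $\Pi$ and $\stY\to\Pi^\NN_\stX$ is representable faithfully flat after composing with an atlas, together with $\beta_*\odi\stY=\odi\Pi$). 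More directly: $\beta_*\odi\stY\simeq\odi\Pi$ shows, by Lemma \ref{fully faithful on Vect means pushforwars O is O}, that $\beta^*\colon\Vect(\Pi)\arr\Vect(\stY)$ is fully faithful, so $\Pi$ is a gerbe iff $\Hl^0(\odi\Pi)=k$ iff $\Hl^0(\odi\stY)=k$, which holds when $\stY$ is inflexible by Remark \ref{infl then HO=k}; the same argument applies to $\Delta$ using $v_*\odi\stY\simeq\odi\Delta$.

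\textbf{Main obstacle.} The delicate point is showing that the algebra object realizing $f_*\odi\stY$ inside $\Vect(\Gamma)$ (resp. $\Vect(\Pi^\NN_\stX)$) is genuinely locally free of finite rank \emph{as a sheaf}, so that $\Specsh$ produces an honest cover whose formation commutes with the base change $\stX\arr\Gamma$; this is exactly the subtlety flagged in Remark \ref{pullback from finite stack} about "kernel in $\Vect$" versus "kernel in $\QCoh$", and it is resolved precisely because $\stX$ is pseudo-proper and inflexible, so the equivalence $\Vect(\Gamma)\simeq$ (Tannakian subcategory of $\EF(\Vect(\stX))$) is compatible with the underlying quasi-coherent sheaves and with $\Hom$'s and tensor products, hence with algebra structures and ranks. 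Everything else is bookkeeping with Cartesian squares and flat base change.
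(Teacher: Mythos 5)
Your construction of $\Delta$ and $\Pi$ matches the paper's: realize the algebra $f_*\odi\stY$ as an algebra object of the Tannakian subcategory $\Vect(\Gamma)\subseteq\EF(\Vect(\stX))$ (resp.\ of $\Vect(\Pi^\NN_{\stX/k})\simeq\EF(\Vect(\stX))$), take relative $\Spec$, and get the Cartesian squares and uniqueness from full faithfulness. Two points, however, need repair. First, a minor one: $v_*\odi\stY\simeq\odi\Delta$ is \emph{not} a tautology of the relative $\Spec$ construction --- that construction tells you what the pushforward of $\odi\Delta$ along $\Delta\arr\Gamma$ is, not what the pushforward of $\odi\stY$ along $v$ is. The correct (and short) argument, which is the paper's, is that $u$ is Nori-reduced, hence $u_*\odi\stX\simeq\odi\Gamma$, and $v$ is the base change of $u$ along the flat map $\Delta\arr\Gamma$, so flat base change gives $v_*\odi\stY\simeq\odi\Delta$. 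You have all the ingredients for this but your stated justification is the wrong one.

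The genuine gap is in the last step. You deduce that $\Pi$ (and $\Delta$) is a gerbe from the chain ``$\Pi$ is a gerbe iff $\Hl^0(\odi\Pi)=k$ iff $\Hl^0(\odi\stY)=k$.'' The first equivalence is false: a finite stack with $\Hl^0(\odi{})=k$ need not be a gerbe unless it is also geometrically reduced, and the paper's own Example \ref{counterexample connected cover non inflexible} produces exactly such a $\Delta=[\Spec k[\epsilon]/\mu_2]$ with $\Hl^0(\odi\Delta)=k$ that is not a gerbe (arising from an essentially finite cover with $\Hl^0(\odi\stY)=k$ and $\stY$ not inflexible). This is precisely why the lemma assumes $\stY$ \emph{inflexible} rather than merely $\Hl^0(\odi\stY)=k$. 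The correct argument uses inflexibility directly: by definition the map $\stY\arr\Delta$ to the finite stack $\Delta$ factors through a finite gerbe, which may be taken to be a closed substack $\Delta'\subseteq\Delta$; then the isomorphism $\odi\Delta\simeq v_*\odi\stY$ factors through the surjection $\odi\Delta\twoheadrightarrow\odi{\Delta'}$, forcing $\Delta'=\Delta$, so $\Delta$ is a gerbe; and since $\Pi^\NN_{\stX/k}\arr\Gamma$ is a quotient of gerbes, $\Pi=\Pi^\NN_{\stX/k}\times_\Gamma\Delta$ is then a gerbe as well.
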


\begin{proof}
 The multiplication map of $f_*\odi\stY$ and its unit map lie
in $\Vect(\Gamma)\subseteq \Vect(\stX)$ and therefore determine a cover
\begin{equation}\label{eq1}
\Delta\arr \Gamma
\end{equation}
extending $f$ as claimed in the statement. Uniqueness of the extension follows from
the fact that $\Vect(\Gamma)\arr \Vect(\stX)$ is fully faithful. As $u$ is Nori
reduced we have  $u_*\odi\stX \simeq \odi\Gamma$ (see Lemma \ref{fully faithful on Vect means pushforwars O is O} or \cite[Lemma 7.11, pp. 22]{BV}). Since $\Delta\arr \Gamma$
is flat we also conclude that $v_*\odi\stY \simeq \odi\Delta$. The cover $\Pi\arr\Pi_{\sX/k}^\NN$ and its uniqueness plus
the fact $\beta_*\odi\stY\simeq \odi\Pi$ are
obtained in exactly the same way.
 
 Assume now that $\stY$ is inflexible. By definition, $\stY\arr \Delta$ factors
through a finite gerbe $\Delta'$, which can be chosen as closed substack
$\Delta'\subseteq \Delta$. But $v_*\odi\stY \simeq \odi\Delta$ which implies that $\Delta=\Delta'$ as required. Notice
that, since $\Pi^\NN_{\stX/k}\arr \Gamma$ is a quotient, the stack $\Delta$ is a gerbe
if and only if $\Pi$ is a gerbe. This completes the proof.

\end{proof}

\begin{proof}[Proof of Theorem \ref{closure of an essentially finite cover}]
    The existence of the diagram is clear: firstly $\Delta,\Gamma,u$ are by Lemma \ref{trivial section iff gerbe}, then $\Spec(k)\arr \Delta$ is the image of $y\in\sY(k)$, and finally $\sP_f$ is defined as a product. Since $u\colon \stX\arr \Gamma=\Bi G_f$ is Nori-reduced so 
is the $G_f$-torsor $\stP_f\arr\stX$. The claim about the monodromy gerbe of $\pi_*\odi{\stP_f}$ 
follows from the following fact: if $G$ is a finite group scheme over $k$, the regular 
representation $k[G]$ generates $\Rep G$ because every finite $G$-representation is a subobject 
of some $k[G]^n$.
 
The morphism $\lambda$ is faithfully flat if and only if $\Spec k\arr \Delta$ is faithfully flat. 
This is the case if and only if $\Delta$ is a gerbe. In such a situation
we have $\Delta=\Bi H$, where $H$ is a 
subgroup of $G_f$ and $\stP_f\arrdi\lambda \stY$ is an $H$-torsor. The map $u'\colon \stY\arr 
\Delta=\Bi H$ is Nori-reduced because ${u'}_*\odi\stY\simeq \odi\Delta$.

The factorization $\lambda = \eta \circ \lambda'$ arises from the factorization $\Spec k \arr 
\Delta' \arr \Delta$, where $\Delta' $ is a subgerbe of the finite stack $\Delta $.

For the last statement, $h$ induces an inclusion $\odi\stY \subset h_* \odi\stT$, and as $f$ is 
affine, an inclusion $f_*\odi\stY \subset g_* \odi\stT$. If we denote by $\langle g_* 
\odi\stT\rangle$ (respectively, $\langle\pi _* \odi{\stP_f}\rangle$) the full Tannakian sub-category of 
$\EF (\Vect (\stX))$ generated by the object $g_* \odi\stT$ (respectively, $\pi _* \odi{\stP_f}$), one 
gets the following  diagram with $2$-Cartesian square:
$$\xymatrix{
\Vect (\Bi G_f) \ar[rr] \ar[d]_{u^*} &&\Vect (\Bi G)\ar[d]^{v^*}\\
\langle\pi _* \odi{\stP_f}\rangle\ar[dr]_{x^*} \ar[rr] &&\langle g_* \odi\stT\rangle\ar[dl]^{x^*}\\
&\Vect (\Spec k)&\\
}$$
where $v \colon \stX \arr \Bi G$ corresponds to the torsor $g \colon \stT \arr \stX$, the horizontal
arrows are inclusions and the vertical arrows are equivalences. As the torsors are pointed
above $x$, the functors $x^* \circ u^*$ and $x^* \circ v^*$ are equivalent to the
forgetful functors.
Therefore, the commutativity of this last diagram proves the existence of a surjective
morphism $\varphi \colon G \arr G_f$ such that $u^*\simeq v^* \circ \varphi ^* : \Vect (\Bi G_f)
\arr \Vect (\stX)$. This implies that $u=\varphi' \circ v$, where $\varphi' \colon
\Bi G \arr \Bi G_f$ is the morphism of gerbes induced by $\varphi$.
\end{proof}

\begin{defn} Let $\sX$ be a pseudo-proper, inflexible fibered category over $k$ with a base point $x\in\sX(k)$, and let $f\colon(\sY,y)\arr(\sX,x)$ be an essentially finite cover.
    The \textit{Galois closure} of $f$ is the pointed torsor $\sP_f\arr\sX$ with the pointed map $\sP_f\arr\sY$ constructed  in Theorem \ref{closure of an essentially finite cover}.
\end{defn}

\begin{defn} \label{inflexible essentially finite covers}
Let $\sX$ be a pseudo-proper, inflexible fibered category over $k$. We call an essentially finite cover $f\colon \sY\arr \sX$ inflexible if $\sY$ is inflexible. \end{defn}

\begin{rmk}
Let $\stX$ be a pseudo-proper and inflexible algebraic stack of finite type over a
field $k$ of positive characteristic, such that $\dim_k \Hl^1(\stX,E)<\infty$ for all
vector bundles $E$ on $\stX$. Let $f\colon \stY\arr \stX$ be a Nori-reduced torsor. Then by Theorem \ref{main thm Nori gerbe of ess finite cover} (which will be proved later) $f$ is 
inflexible.
\end{rmk}

\begin{lem}   \label{GC for inflexible essentially finite covers}
    Let $\sX$ be a pseudo-proper, inflexible fibered category with a rational point $x\in\sX(k)$. Let $f\colon \sY\arr \sX$ be an inflexible essentially finite cover equipped with a rational point $y\in \sY(k)$ mapping to $x$. Then the Galois closure $\sP_f$ of $f$ satisfies:
\begin{itemize}
\item The $G_f$-torsor $\sP_f$ is Nori-reduced.
\item  For any pointed Nori-reduced torsor $g \colon (\stT,t) \arr (\stX,x)$ under a finite group scheme $G$, 
and any pointed $\stX$-morphism $h: (\stT,t) \to (\stY,y)$, there is a unique factorization $h=\lambda \circ j$, where $j\colon (\stT,t) \arr (\stP _f,p)$ is equivariant with 
respect to a surjective homomorphism $G\arr G_f$.
\end{itemize}    
 \end{lem}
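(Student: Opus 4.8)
The plan is to deduce both bullet points from Theorem \ref{closure of an essentially finite cover} by exploiting the hypothesis that $f$ is \emph{inflexible}, i.e.\ that $\sY$ is inflexible. First I would recall the notation of Theorem \ref{closure of an essentially finite cover}: $\Gamma$ is the monodromy gerbe of $f_*\odi\sY$, the cover $\Delta\arr\Gamma$ extends $f$ (Lemma \ref{trivial section iff gerbe}), and $\sP_f\arr\sX$ is the pointed $G_f$-torsor obtained as the fiber product, with $G_f=\Autsh_\Gamma(u(x))$ and $\lambda\colon\sP_f\arr\sY$ the induced $\sX$-morphism. By the last sentence of Lemma \ref{trivial section iff gerbe}, since $\sY$ is inflexible the stacks $\Pi$ and $\Delta$ are affine gerbes; in particular $\Delta$ is a gerbe. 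Now part (2) of Theorem \ref{closure of an essentially finite cover} says that $\lambda$ is faithfully flat if and only if $\Delta$ is a gerbe, which holds here, and in that case $\lambda$ is a Nori-reduced torsor under a subgroup scheme of $G_f$. And part (1) already states that $\pi\colon\sP_f\arr\sX$ is a \emph{pointed Nori-reduced torsor} under $G_f$. This is exactly the first bullet.

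For the second bullet I would invoke part (4) of Theorem \ref{closure of an essentially finite cover}, which gives the universal property: for any pointed Nori-reduced torsor $g\colon(\sT,t)\arr(\sX,x)$ under a finite group scheme $G$ and any pointed \emph{faithfully flat} $\sX$-morphism $h\colon(\sT,t)\arr(\sY,y)$, there is a unique factorization $h=\lambda\circ j$ with $j\colon(\sT,t)\arr(\sP_f,p)$ equivariant along a surjection $G\arr G_f$. So the only gap between part (4) and the statement of the lemma is that part (4) assumes $h$ is faithfully flat, whereas the lemma merely assumes $h$ is a pointed $\sX$-morphism. The key observation is that under the inflexibility hypothesis this is automatic: $\sT$ is inflexible (being a Nori-reduced torsor over the inflexible $\sX$, hence satisfying $\Hl^0(\sT,\odi\sT)=k$ by the torsor case of Theorem \ref{main thm Nori gerbe of ess finite cover}, cf.\ the preceding Remark), and a morphism from an inflexible pseudo-proper fibered category to a cover of $\sX$ is forced to be faithfully flat. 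I expect this to be the main obstacle, so let me say how I would handle it.

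Concretely, I would argue as follows. The morphism $h\colon\sT\arr\sY$ is a morphism of covers of $\sX$ (both $\sT\arr\sX$ and $\sY\arr\sX$ are affine), hence is itself affine; write $h_*\odi\sT$ as an $\odi\sY$-algebra. Since $\sY$ is reduced (being inflexible pseudo-proper with $\Hl^0(\odi\sY)=k$, and more to the point a torsor, hence geometrically reduced as $\sX$ is) and connected in the sense that $\Hl^0(\odi\sY)=k$, and $h$ is a nonzero map of covers, the image of $h$ is a closed-and-open substack, hence all of $\sY$ by connectedness; this gives surjectivity. Flatness then follows because $\sP_f\arr\sY$ is faithfully flat (first bullet) and $h$ lifts through it after base change — more cleanly, one applies part (3) of Theorem \ref{closure of an essentially finite cover} in the degenerate situation: the factorization $\lambda=\eta\circ\lambda'$ there has $\eta$ a closed immersion, but here $\Delta$ is already a gerbe so $\eta=\id$ and $\lambda$ is faithfully flat, whence any $h$ through which the relevant construction factors is faithfully flat too. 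Having established that $h$ is faithfully flat, I apply part (4) verbatim to obtain the unique factorization $h=\lambda\circ j$ with $j$ equivariant along a surjection $G\arr G_f$, completing the proof.

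Alternatively, and perhaps more robustly, I would note that it suffices to show that \emph{every} pointed $\sX$-morphism $h\colon(\sT,t)\arr(\sY,y)$ from a Nori-reduced torsor is faithfully flat, and this can be seen Tannakially: $h$ corresponds to a monoidal functor $\Vect(\sY)\arr\Vect(\sT)$ compatible with the structure maps to $\Pi^\NN_\sX$, and since $f_*\odi\sY$ lands in $\EF(\Vect(\sX))=\Vect(\Pi^\NN_\sX)$ and $\sT$ is itself a torsor over $\sX$ with $\Pi^\NN_\sT$ an open subgroup of $\Pi^\NN_\sX$, the map on Nori gerbes $\Pi^\NN_\sT\arr\Pi\arr\Pi^\NN_\sX$ exhibits $h$ as a base change of a faithfully flat map of gerbes, hence faithfully flat. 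I expect the referee-facing version to simply cite part (4) of Theorem \ref{closure of an essentially finite cover} together with the remark that inflexibility of $\sT$ forces $h$ to be faithfully flat, and leave this last point as a one-line observation rather than grinding through it.
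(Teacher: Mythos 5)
Your proposal is correct and follows essentially the same route as the paper: the paper's proof consists precisely of the observation that, because $\sY$ is inflexible, Lemma \ref{trivial section iff gerbe} (equivalently, the equivalence of Proposition \ref{equiv for essentially finite covers}) exhibits any $\sX$-morphism $h\colon\sT\arr\sY$ from an essentially finite cover as the pullback along $\sX\arr\Pi^\NN_\sX$ of a map of finite stacks whose target is a gerbe, hence $h$ is automatically faithfully flat, after which Theorem \ref{closure of an essentially finite cover}~(4) applies verbatim. Your first, ad hoc justification of faithful flatness (via reducedness and a clopen-image argument) is shaky and unnecessary, but the Tannakian alternative you offer is exactly the paper's intended one-line argument, so the proof stands.
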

 \begin{proof}
   Using  Lemma \ref{trivial section iff gerbe} one can easily show
   that when $\sY$ is inflexible any map from an essentially finite cover $\sT$ to $\sY$ is faithfully flat. In view of Theorem 
   \ref{closure of an essentially finite cover} (4)
   we can conclude the proof.
 \end{proof}

Here are some technical lemmas which will be used in proving Theorem 
\ref{main thm Nori gerbe of ess finite cover}.

\begin{lem}\label{vector bundles of an extension}
 Consider a $2$-Cartesian diagram
   \[
  \begin{tikzpicture}[xscale=1.5,yscale=-1.2]
    \node (A0_0) at (0, 0) {$\stY$};
    \node (A0_1) at (1, 0) {$\Psi$};
    \node (A1_0) at (0, 1) {$\stX$};
    \node (A1_1) at (1, 1) {$\Phi$};
    \path (A0_0) edge [->]node [auto] {$\scriptstyle{v}$} (A0_1);
    \path (A0_0) edge [->]node [auto] {$\scriptstyle{f}$} (A1_0);
    \path (A0_1) edge [->]node [auto] {$\scriptstyle{\pi}$} (A1_1);
    \path (A1_0) edge [->]node [auto] {$\scriptstyle{u}$} (A1_1);
  \end{tikzpicture}
  \]
  where $\stX, \sY,\Phi,\Psi$ are fibered categories over $k$ which admit fpqc-atlases from schemes, and $u,\pi$  are faithfully flat where $\pi$ is affine and $u$ is quasi-compact and quasi-separated such that $u_*\odi\stX\simeq \odi\Phi$. Then $v_*\odi\stY\simeq\odi\Psi$, and
the two functors $u^*\colon\Vect(\Phi)\arr\Vect(\stX)$ and
$v^*\colon\Vect(\Psi)\arr\Vect(\stY)$ are fully faithful.

A vector bundle $V\in\Vect(\stX)$ lies in the essential image of 
$u^*\colon\Vect(\Phi)\arr\Vect(\stX)$ if and only if $f^*V$ comes from a
vector bundle on $\Psi$.

If $\pi$ is a surjective cover, a vector bundle $V\in\Vect(\stY)$ lies  in the  essential image of  $v^*\colon\Vect(\Psi)\arr\Vect(\stY)$ if and only if $f_*V$ comes from a vector bundle on $\Phi$.
\end{lem}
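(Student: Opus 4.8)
\textbf{Proof proposal for Lemma \ref{vector bundles of an extension}.}

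The plan is to reduce everything to Lemma \ref{fully faithful on Vect means pushforwars O is O} and to a base-change (projection-formula) argument. First I would establish $v_*\odi\stY\simeq \odi\Psi$: since the square is $2$-Cartesian and $u$ is faithfully flat, quasi-compact and quasi-separated, the formation of $u_*$ commutes with the flat base change along $\pi$ (using the compatibility of pushforward with flat base change recalled in the Notation subsection, applied in the form $\pi^* u_* \odi\stX \simeq v_* f^* \odi\stX = v_*\odi\stY$). Hence $v_*\odi\stY \simeq \pi^*(u_*\odi\stX)\simeq \pi^*\odi\Phi = \odi\Psi$. Once this is in hand, Lemma \ref{fully faithful on Vect means pushforwars O is O} applied to $u$ and to $v$ gives that both $u^*\colon \Vect(\Phi)\arr\Vect(\stX)$ and $v^*\colon\Vect(\Psi)\arr\Vect(\stY)$ are fully faithful; here one only needs to note that $\Psi$ admits an fpqc-atlas from a scheme and $v$ is quasi-compact and quasi-separated, which follows by base change from the corresponding properties of $u$ (and $\pi$ affine makes $v$ affine, hence in particular q.c.q.s.).

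For the second assertion, suppose $V\in\Vect(\stX)$ with $f^*V\simeq v^*W$ for some $W\in\Vect(\Psi)$. The strategy is to produce the descended bundle on $\Phi$ as $u_* V$, i.e. to show $u^*u_* V\simeq V$ via the adjunction unit. Since $u^*u_*$ is computed after the faithfully flat base change $\pi$ (again by flat base change: $\pi^* u_* V \simeq v_* f^* V \simeq v_* v^* W \simeq W\otimes v_*\odi\stY \simeq W$, using the projection formula and $v_*\odi\stY\simeq\odi\Psi$), the bundle $u_*V$ is a vector bundle on $\Phi$ (locally free because it becomes so after the fpqc cover $\pi$), and the counit $u^* u_* V\arr V$ becomes an isomorphism after pulling back along the faithfully flat $f$ — indeed it is identified with $v^* v_* f^* V \arr f^*V$, which is an isomorphism since $f^*V\simeq v^*W$ and $v_*v^*\simeq \id$ on $\Vect(\Psi)$. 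Faithful flatness of $f$ then forces $u^*u_*V\simeq V$. The converse direction is trivial: if $V\simeq u^*V'$ then $f^*V\simeq f^*u^*V'\simeq v^*\pi^*V'$ by commutativity of the square.

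The third assertion is the analogue ``downstairs'': given $V\in\Vect(\stY)$ with $f_*V\simeq u^*W'$ for some $W'\in\Vect(\Phi)$, I would show $V\simeq v^*(\pi_*? )$ — more precisely, the natural candidate for the bundle on $\Psi$ is obtained by base change. Here one uses that $\pi$ is a \emph{surjective cover}, so $\pi$ is finite flat and $\pi_*$ is exact and commutes with arbitrary base change (in particular with $u$); thus $u^*\pi_* V \simeq f_* v^* (\text{?})$ is not directly what we want, so instead run the argument through the unit $V\arr v_*v^*V$-type map in reverse: consider the bundle $W := $ the descent of $V$ along $v$, which exists because $f_* V\simeq u^*W'$ pulls back under $\pi$ to $\pi^* u^* W' \simeq \pi^*\pi_*(v^*?)$... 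The cleanest route is: form $\Psi' := $ the relative spectrum over $\Psi$ of the sheaf of algebras $v_*\odi\stY$ (which is $\odi\Psi$, so no issue) — actually the right approach is to observe that $v$ is itself a faithfully flat cover (base change of $\pi$), apply fpqc descent for vector bundles along $v$, and check that the bundle $V$ carries a descent datum precisely because its pushforward $f_*V\simeq u^*W'$ descends along $u$; the gluing isomorphism over $\stY\times_\Psi\stY = (\stX\times_\Phi\stX)\times_\Phi\Psi$ is transported from the one exhibiting $f_*V$ as a pullback from $\Phi$. I expect \textbf{this last part to be the main obstacle}: making the descent datum argument precise requires carefully matching the two fiber products $\stY\times_\Psi\stY$ and $\stX\times_\Phi\stX$ and checking cocycle compatibility, and one must genuinely use that $\pi$ (hence $v$) is finite flat rather than merely faithfully flat, so that $f_*$ commutes with the base changes $\stX\times_\Phi\stX\arr\stX$ and the descent of $V$ can be read off from the descent of $f_*V$. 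Once the descent datum is produced, fpqc descent yields $W\in\Vect(\Psi)$ with $v^*W\simeq V$, and the converse is again immediate from the Cartesian square together with $f_*v^*W\simeq u^*\pi_*W$ (base change).
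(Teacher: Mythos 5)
Your first two assertions are handled correctly and essentially as in the paper: flat base change along $\pi$ gives $v_*\odi\stY\simeq\pi^*u_*\odi\stX\simeq\odi\Psi$, Lemma \ref{fully faithful on Vect means pushforwars O is O} gives both full faithfulness statements, and for the second assertion the counit $u^*u_*V\arr V$ pulled back along the faithfully flat $f$ becomes $v^*v_*(v^*W)\arr v^*W$, which is an isomorphism since $v_*\odi\stY\simeq\odi\Psi$; descent then finishes. (Minor slip: $v$ is the base change of $u$, not of $\pi$; it is faithfully flat for that reason, and need not be finite.)

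The third assertion is where your proposal has a genuine gap, and you correctly sensed it. You propose to descend $V$ along the fpqc covering $v$ by ``transporting'' the descent datum that exhibits $f_*V$ as $u^*W'$. But that datum is only an isomorphism $p_1^*(f_*V)\simeq p_2^*(f_*V)$ of $\odi{\stX\times_\Phi\stX}$-modules. To induce a descent datum on $V$ itself over $\stY\times_\Psi\stY$ you would need this isomorphism to be linear over the two pullbacks of the algebra $f_*\odi\stY$ (equivalently, you would need $f_*V$ to descend as a $\pi_*\odi\Psi$-\emph{module}, not merely as an $\odi\stX$-module) — and that is essentially what you are trying to prove, so nothing forces the compatibility. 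The paper avoids descent data entirely: the candidate on $\Psi$ is $v_*V$, and one shows the counit $v^*v_*V\arr V$ is an isomorphism by applying $f_*$ to it. Using that $\pi$ is affine, affine base change gives $u^*\pi_*\simeq f_*v^*$, so $f_*(v^*v_*V\arr V)$ is identified with the counit $u^*u_*(f_*V)\arr f_*V$, which is an isomorphism because $f_*V$ comes from $\Phi$ and $u_*\odi\stX\simeq\odi\Phi$. Since $f$ is affine, $f_*$ is faithful and exact on quasi-coherent sheaves and hence reflects isomorphisms, so $v^*v_*V\simeq V$; finally $v_*V$ is locally free because $v$ is faithfully flat and $v^*(v_*V)\simeq V$ is. I recommend replacing your descent sketch with this argument (or, alternatively, descending the $f_*\odi\stY$-module structure on $f_*V$ to $\Phi$ using the full faithfulness of $u^*$ — but that must be done explicitly, not by appeal to raw fpqc descent of $V$).
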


\begin{proof}
By \cite[Lemma 7.17]{BV} and flat base change it follows that 
\begin{itemize}
\item $v_*\odi\stY\simeq\odi\Psi$,

\item $u^*\colon\Vect(\Phi)\arr\Vect(\stX)$ and 
$v^*\colon\Vect(\Psi)\arr\Vect(\stY)$ are fully faithful.
\end{itemize}
Denote by $\shD$ and $\shC$ the essential images of these $u^*$ and $v^*$ respectively.
 
Let $V\in\Vect(\stX)$. We must show that $V\in \shD$ if and only if $f^*V\in 
\shC$. The ``only if'' part is clear. Conversely, suppose that $f^* V= v^* W$ with
$W \,\in \, \Vect(\Psi )$, and consider the
canonical homomorphism $u^* u_* V \arr V$; pulling back by $f$ one 
gets $$v^*v_*(f^*V) =v^*v_*(v^*W) \arr v^*W=f^*V\, .$$ This
homomorphism is an isomorphism because
$v_* \odi\stY \simeq \stO _\Psi$. As $f$ is faithfully flat, one concludes that $V 
\simeq u^*u_* V$, and as $u$ is faithfully flat, it follows that $u_*V$ is a 
vector bundle. Thus we have $V\in\shD$.

Assume now that $\pi$ is a surjective cover;
consequently $f$ is also a surjective cover. In particular, $\pi_*$ and $f_*$
send vector bundles to vector bundles. Given $V\in\Vect(\stY)$ we must show that $V\in \shC$ if and only if $f_*V\in\shD$. 
The ``only if'' part is easy: if $W\in \Vect(\Psi)$ then $f_*(v^*W)\simeq u^*\pi_*W$
because $\pi$ is affine.

For the converse, assume that $f_*V\in \shD$, meaning 
$f_*V$ comes from a vector bundle on $\Phi$. Since $u_*\odi\stX\simeq 
\odi\Phi$ it follows that $u_*(f_*V)$ is a vector bundle and the canonical
homomorphism $u^*u_* 
(f_*V) \arr (f_*V)$ is an isomorphism. This homomorphism can also be obtained
by applying 
$f_*$ to the canonical homomorphism $v^*v_*V\arr V$. Since $f$ is affine this means that 
the previous homomorphism is an isomorphism. To conclude that $V\in \shC$ it
suffices to 
show that $v_*V$ is a vector bundle. But $v$ is faithfully flat and $v^*(v_*V)$ is
a vector bundle. Now by descent it follows that $v_*V$ is also a vector bundle.
\end{proof}

\begin{rmk}\label{pull back by a torsor}
Consider a $G$-torsor $f \colon \stY \to \stX$ for an affine group scheme $G$, where
$\stX$ is a quasi-compact and quasi-separated algebraic stack, and the
corresponding $2$-Cartesian diagram
  \[
  \begin{tikzpicture}[xscale=1.5,yscale=-1.2]
    \node (A0_0) at (0, 0) {$\stY$};
    \node (A0_1) at (1, 0) {$\Spec k$};
    \node (A1_0) at (0, 1) {$\stX$};
    \node (A1_1) at (1, 1) {$\Bi G$};
    \path (A0_0) edge [->]node [auto] {$\scriptstyle{v}$} (A0_1);
    \path (A0_0) edge [->]node [auto] {$\scriptstyle{f}$} (A1_0);
    \path (A0_1) edge [->]node [auto] {$\scriptstyle{}$} (A1_1);
    \path (A1_0) edge [->]node [auto] {$\scriptstyle{u}$} (A1_1);
  \end{tikzpicture}
  \]
We see that $\Hl^0(\odi\stY)=k$ if and only if $u_*\odi\stX \simeq \odi{\Bi G}$. In
this case, applying Lemma \ref{vector bundles of an extension}, we conclude that $u^*\colon \Vect(\Bi G)\arr \Vect(\stX)$ is fully faithful with essential image the category of vector bundles $V$ such that $f^*V$ is trivial.
\end{rmk}

\begin{lem}[{\cite[p.~264, Lemma 1]{Nori2}}]\label{morphisms of torsors}
Let $\stX$ be a quasi-compact and quasi-separated algebraic algebraic stack and
  \[
  \begin{tikzpicture}[xscale=0.9,yscale=-1.0]
    \node (A0_0) at (0, 0) {$\stZ$};
    \node (A0_2) at (2, 0) {$\stY$};
    \node (A1_1) at (1, 1) {$\stX$};
    \path (A0_0) edge [->]node [auto] {$\scriptstyle{h}$} (A0_2);
    \path (A0_2) edge [->]node [auto] {$\scriptstyle{g}$} (A1_1);
    \path (A0_0) edge [->]node [auto,swap] {$\scriptstyle{f}$} (A1_1);
  \end{tikzpicture}
  \]
a $2$-commutative diagram, where $f$ and $g$ are torsors for affine group schemes $G$ and $H$
respectively.  Suppose that $\Hl^0(\odi\stZ)=k$. Then there exists a homomorphism $\varphi \colon
G \arr H$ inducing $h$.

Moreover, $h$ is faithfully flat if and only if $\varphi \colon G \arr H$ is faithfully flat, in
which case $h\colon \stZ \arr \stY$ is a torsor for the kernel of $\varphi$. If $H/\Imm(\varphi)$ is
affine (e.g. if $H$ or $G$ are finite) then this is also equivalent to the statement
that $\Hl^0(\odi\stY)=k$. 
\end{lem}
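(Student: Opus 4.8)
The plan is to reconstruct Nori's argument, extracting $\varphi$ from the two evident trivialisations over $\stZ$. Since $g$ is an $H$-torsor and $h$ is a morphism over $\stX$, the first projection $\stZ\times_\stX\stY\arr\stZ$ is an $H$-torsor admitting the section $z\mapsto(z,h(z))$, hence is trivial; fix an isomorphism $\stZ\times_\stX\stY\simeq\stZ\times H$ over $\stZ$ carrying this section to $(z,e)$. Likewise the first projection $\stZ\times_\stX\stZ\arr\stZ$ is a $G$-torsor with the diagonal as section, giving $\stZ\times_\stX\stZ\simeq\stZ\times G$. Under these identifications the morphism $(z_1,z_2)\mapsto(z_1,h(z_2))$ over $\stZ$ becomes a morphism $\stZ\times G\arr\stZ\times H$ over $\stZ$, i.e.\ a morphism $\psi\colon\stZ\times G\arr H$ characterised by $h(z\cdot\gamma)=h(z)\cdot\psi(z,\gamma)$. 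As $\stZ$ is quasi-compact and $\Hl^0(\stZ,\odi\stZ)=k$, the structure sheaf of $\stZ\times G$ pushes forward along the affine map $\stZ\times G\arr\stZ$ to $\odi\stZ\otimes_k k[G]$ and $\Hl^0$ commutes with the direct sum decomposition over a $k$-basis of $k[G]$, so $\Hl^0(\stZ\times G,\odi{\stZ\times G})\simeq k[G]$; hence $\psi$ factors uniquely through the projection $\stZ\times G\arr G$ as a morphism $\varphi\colon G\arr H$. Taking $\gamma=e$ in the defining relation gives $\varphi(e)=e$, while computing $h(z\gamma_1\gamma_2)$ in two ways yields the cocycle identity $\psi(z,\gamma_1\gamma_2)=\psi(z,\gamma_1)\,\psi(z\gamma_1,\gamma_2)$, which after passing to $\varphi$ reads $\varphi(\gamma_1\gamma_2)=\varphi(\gamma_1)\varphi(\gamma_2)$. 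Thus $\varphi$ is a homomorphism and $h(z\gamma)=h(z)\varphi(\gamma)$, i.e.\ $h$ is $\varphi$-equivariant; $\varphi$ is clearly the only homomorphism with this property.

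Next I would base change the whole diagram along the faithfully flat morphism $f\colon\stZ\arr\stX$: by the computation above the pullback of $h$ is precisely $\id_\stZ\times\varphi\colon\stZ\times G\arr\stZ\times H$ over $\stZ$, which is itself the pullback of $\varphi$ along $\stZ\arr\Spec k$. Since $\stX$ is quasi-compact and quasi-separated and $f$ is affine, $\stZ$ is quasi-compact and quasi-separated, and being nonempty it is an fppf cover of $\Spec k$; as faithful flatness is stable under base change and descends along fppf covers, $h$ is faithfully flat if and only if $\varphi$ is. Set $N=\Ker\varphi$. Combining $\stZ\times_\stX\stZ\simeq\stZ\times G$, the equivariance of $h$, and the freeness of the $H$-action on the torsor $\stY$, one sees that $\stZ\times_\stY\stZ$ is cut out inside $\stZ\times_\stX\stZ\simeq\stZ\times G$ by the condition $\varphi(\gamma)=e$, so that the canonical morphism $\stZ\times N\arr\stZ\times_\stY\stZ$, $(z,n)\mapsto(z,z\cdot n)$, is an isomorphism. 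As $f=g\circ h$ is affine and $g$ is separated, $h$ is affine, hence quasi-compact; so when $h$ is faithfully flat this isomorphism exhibits $h$ as an $N$-torsor.

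Finally, suppose $H/\Imm(\varphi)$ is affine. The $\varphi$-equivariant map $h$ induces a morphism $\stZ\times^G H\arr\stY$ of $H$-torsors over $\stX$, which is necessarily an isomorphism, so
\[
\Hl^0(\stY,\odi\stY)=\Hl^0\bigl(\stZ\times H,\odi{\stZ\times H}\bigr)^{G}=\bigl(\Hl^0(\stZ,\odi\stZ)\otimes_k k[H]\bigr)^{G}=k[H]^{G},
\]
the last equality using $\Hl^0(\stZ,\odi\stZ)=k$, with $G$ acting on $k[H]$ through $\varphi$ by translation. Since $G\arr\Imm(\varphi)$ is faithfully flat the invariants coincide, $k[H]^{G}=k[H]^{\Imm(\varphi)}$, and this is the ring of functions on the coset space $H/\Imm(\varphi)$ (up to the inversion of $H$); being affine, it equals $k$ exactly when $H/\Imm(\varphi)=\Spec k$, i.e.\ $\Imm(\varphi)=H$, i.e.\ $\varphi$ is surjective --- equivalently faithfully flat, since the two notions agree for affine group schemes over a field. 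This closes the chain of equivalences. I expect the main obstacle to be the bookkeeping in the first paragraph, namely arranging the trivialisations so that the cocycle relation genuinely forces $\varphi$ to be multiplicative, and, in the last paragraph, the identification $k[H]^{G}=k[H]^{\Imm(\varphi)}$ together with keeping straight which one-sided coset space carries the affineness hypothesis.
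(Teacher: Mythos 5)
Your proof is correct, but it takes a genuinely different route from the paper's. You run Nori's original cocycle argument (adapted to stacks): trivialize $\stZ\times_\stX\stY$ and $\stZ\times_\stX\stZ$ over $\stZ$, extract $\psi\colon \stZ\times G\arr H$, and use $\Hl^0(\odi{\stZ\times G})\simeq k[G]$ --- which is legitimate since $\stZ$ is quasi-compact and quasi-separated, so $\Hl^0$ commutes with the possibly infinite direct sum $\odi\stZ\otimes_k k[G]$ --- to force $\psi$ to factor through a homomorphism $\varphi$; the remaining claims then follow by fpqc descent along $f$ and a direct computation of $k[H]^{\Imm(\varphi)}$. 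The paper argues Tannakianly instead: writing $u\colon \stX\arr \Bi G$ and $v\colon \stX\arr \Bi H$ for the classifying maps, it uses $\Hl^0(\odi\stZ)=k$ to make $u^*\colon \Vect(\Bi G)\arr\Vect(\stX)$ fully faithful with essential image the bundles trivialized by $f$ (Lemma \ref{fully faithful on Vect means pushforwars O is O} and Remark \ref{pull back by a torsor}), deduces a factorization $v=\gamma\circ u$ with $\gamma\colon \Bi G\arr \Bi H$, produces a $k$-point of the fibre $\Bi G\times_{\Bi H}\Spec k$ by a pushforward computation so that $\gamma$ preserves trivial torsors and is induced by a homomorphism, and settles the final equivalence via Lemma \ref{key product gerbes} and the full-faithfulness criterion rather than by computing invariants. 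Your version is more elementary and hands you the explicit equivariance $h(z\gamma)=h(z)\varphi(\gamma)$; the paper's version avoids pointwise manipulations on stacks (all your ``equalities'' are really identifications of morphisms into the schemes $G$ and $H$, where the Hom-groupoids are discrete, so this is harmless but worth saying) and reuses the Tannakian lemmas already in place. Two minor touch-ups: descent of faithful flatness along $\stZ\arr\Spec k$ should be invoked for the fpqc topology, since $\stZ$ need not be of finite presentation over $k$; and in the last step the invariants $k[H]^{G}$ are functions on a one-sided coset space that differs from $H/\Imm(\varphi)$ by the inversion of $H$, which you correctly flag --- neither point affects correctness.
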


\begin{proof}
Consider the morphisms $u \colon \stX \arr \Bi G$ and $v\colon \stX \arr \Bi H$ corresponding to the
torsors $f$ and $g$ respectively. Since $\Hl^0(\sO_\sZ)=k$ we have  $u_*\sO_\sX\simeq
\sO_{\Bi G}$, and hence by \ref{fully faithful on Vect means pushforwars O is O} the pullback functor $u^*\colon \Vect(\Bi G)\arr \Vect(\sX)$ is fully faithful. The
objects of the essential image of $v^* \colon \Vect (\Bi H) \arr \Vect (\stX )$ are trivialized by
$g$ and thus by $f$. From Remark \ref{pull back by a torsor} we obtain a factorization
\[
v^*\colon \Vect(\Bi H)\arr \Vect(\Bi G) \subseteq  \Vect(\stX)
\]
which, by Tannakian duality, is induced by a factorization $v\colon \stX\arrdi u \Bi G \arrdi \gamma \Bi H$. Consider the $2$-Cartesian diagrams
  \[
  \begin{tikzpicture}[xscale=1.5,yscale=-1.2]
    \node (A0_0) at (0, 0) {$\stZ$};
    \node (A0_1) at (1, 0) {$\Spec k$};
    \node (A1_0) at (0, 1) {$\stY$};
    \node (A1_1) at (1, 1) {$U$};
    \node (A1_2) at (2, 1) {$\Spec k$};
    \node (A2_0) at (0, 2) {$\stX$};
    \node (A2_1) at (1, 2) {$\Bi G$};
    \node (A2_2) at (2, 2) {$\Bi H$};
    \path (A0_1) edge [->,dashed]node [auto] {$\scriptstyle{w}$} (A1_1);
    \path (A0_0) edge [->]node [auto] {$\scriptstyle{}$} (A0_1);
    \path (A2_0) edge [->]node [auto] {$\scriptstyle{u}$} (A2_1);
    \path (A1_0) edge [->]node [auto] {$\scriptstyle{a}$} (A1_1);
    \path (A1_1) edge [->]node [auto] {$\scriptstyle{}$} (A1_2);
    \path (A1_0) edge [->]node [auto] {$\scriptstyle{g}$} (A2_0);
    \path (A1_1) edge [->]node [auto] {$\scriptstyle{g'}$} (A2_1);
    \path (A0_0) edge [->]node [auto] {$\scriptstyle{h}$} (A1_0);
    \path (A2_1) edge [->]node [auto] {$\scriptstyle{\gamma}$} (A2_2);
    \path (A1_2) edge [->]node [auto] {$\scriptstyle{}$} (A2_2);
  \end{tikzpicture}
  \]
We claim that there exists a dashed arrow $w$ as above making the upper diagram $2$-Cartesian. This
would imply that the functor $\gamma\colon \Bi G\arr \Bi H$ is induced by a group homomorphism  $\varphi \colon G \arr H$.

Set $f'\colon \Spec k\arr \Bi G$. Consider the map of $\sO_\sX$-algebras $\lambda\colon 
g_*\sO_\sY\arr f_*\sO_\sZ$. Applying $u_*$ to $\lambda$ and using $a_*\sO_\sY\simeq \sO_U$, 
$\Hl^0(\sO_\sZ)=k$, we get a map $${g'}_*\sO_U\cong {g'}_* a_*\sO_\sY\cong u_* g_*\sO_\sY\arr 
u_*f_*\sO_\sZ\cong {f'}_*\sO_{\Spec(k)}\, .$$ Applying $\Spec_{\Bi G}(-)$ on both sides we get the 
arrow $w$.

To prove that $h$ is the pullback of $w$ we just have to show that the adjunction maps 
$$u^*u_*g_*\sO_\sY\arr g_*\sO_\sY\hspace{20pt}\text{and}\hspace{20pt} u^*u_*{f}_*\sO_\sZ\arr 
f_*\sO_\sZ$$ are isomorphisms. But the adjunction map for $g_*\sO_\sY$ coincides with the following 
composition: $$u^*u_*g_*\sO_\sY\cong u^*{g'}_*a_*\sO_\sY\cong u^*{g'}_*\sO_U\cong g_*\sO_\sY$$ 
which is an isomorphism, and the same is true for ${f}_*\sO_\sZ$.

The map $h$ is faithfully flat if and only if $w$ is faithfully flat. By
Lemma \ref{key product gerbes} this is the case if and only if $\varphi\colon G\arr H$ is
surjective, so that $U=\Bi (\Ker(\varphi))$. The condition that $\Hl^0(\odi\stY)=k$ is equivalent to
the condition that $v_*\odi\stX \simeq \odi{\Bi H}$ and, by Lemma
\ref{fully faithful on Vect means pushforwars O is O}, to the full faithfulness of the functor
$\gamma^*\colon \Vect(\Bi H)\arr \Vect(\Bi G)$. This last condition is equivalent to the surjectivity of $\varphi\colon G\arr H$ when $H/\Imm(\varphi)$ is affine (see \cite[Remark B.7]{TZ}).
\end{proof}

\begin{lem}\label{decomposition series lemma}
Let $\stX$ be an algebraic stack over a field $k$, $A$ a finite and local $k$-algebra with residue
field $k$ and $\shF\in \Vect(\stX\times \Spec A)$. Let $\stX\arrdi i \stX\times \Spec A \arrdi f
\stX$ be the maps corresponding to $k\arr A\arr k$. Then there is a sequence of surjective maps of
vector bundles
 \[
 f_*\shF=\shG_N \arr \shG_{N-1} \arr \cdots \arr \shG_1 \arr \shG_0=0
\]
such that $\Ker(\shG_l \arr \shG_{l-1})\simeq i^*\shF$ for all $1\leq l \leq N$.
\end{lem}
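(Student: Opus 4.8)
The plan is to obtain the sequence by pulling back, and then pushing forward, a composition series of $A$ viewed as a module over itself. First I would use that $A$, being a finite local $k$-algebra with residue field $k$, is Artinian, so the $A$-module $A$ has a composition series
\[
0=I_0\subset I_1\subset\cdots\subset I_N=A,\qquad I_l/I_{l-1}\simeq A/\mathfrak m\simeq k\quad(1\le l\le N),
\]
where $\mathfrak m$ is the maximal ideal and $N=\dim_k A$. Let $\pr\colon\stX\times\Spec A\arr\Spec A$ be the projection; it is flat (a base change of $\stX\arr\Spec k$), and the closed immersion $i$ is precisely the base change of the closed point $\Spec k\arr\Spec A$ along $\pr$. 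Applying the exact functor $\pr^*$ to the composition series yields a filtration $0=\shI_0\subset\shI_1\subset\cdots\subset\shI_N=\odi{\stX\times\Spec A}$ of the structure sheaf by quasi-coherent submodules, and since $I_l/I_{l-1}\simeq A/\mathfrak m$, flat base change along $\pr$ identifies each graded piece $\shI_l/\shI_{l-1}$ with $i_*\odi\stX$.

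Next I would tensor this filtration with $\shF$. As $\shF$ is locally free, hence flat over $\odi{\stX\times\Spec A}$, tensoring preserves exactness, so $0=\shF\otimes\shI_0\subset\cdots\subset\shF\otimes\shI_N=\shF$ is a filtration whose successive quotients are $\shF\otimes(\shI_l/\shI_{l-1})\simeq\shF\otimes i_*\odi\stX\simeq i_*i^*\shF$, the last isomorphism being the projection formula for the closed immersion $i$. Then I would apply $f_*$: the morphism $f$ is a (finite, flat) cover, hence affine, so $f_*$ is exact and preserves vector bundles, and $f\circ i=\id_\stX$ gives $f_*i_*i^*\shF\simeq i^*\shF$. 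Thus I obtain a filtration
\[
0=f_*(\shF\otimes\shI_0)\subset f_*(\shF\otimes\shI_1)\subset\cdots\subset f_*(\shF\otimes\shI_N)=f_*\shF
\]
of the vector bundle $f_*\shF$ with every graded piece isomorphic to $i^*\shF$.

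Finally I would set $\shG_l:=f_*\shF/f_*(\shF\otimes\shI_{N-l})$ for $0\le l\le N$. Then $\shG_N=f_*\shF$ and $\shG_0=0$; since $\shI_{N-l}\subset\shI_{N-l+1}$ the natural maps $\shG_l\arr\shG_{l-1}$ are surjective with kernel $f_*(\shF\otimes\shI_{N-l+1})/f_*(\shF\otimes\shI_{N-l})\simeq i^*\shF$; and each $\shG_l$ is a vector bundle by induction on $l$, being an extension of the vector bundle $\shG_{l-1}$ by the vector bundle $i^*\shF$. This is exactly the asserted sequence.

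I do not expect a genuine obstacle: the argument is bookkeeping around a composition series. The only points requiring care are the two identifications of graded pieces — $\shI_l/\shI_{l-1}\simeq i_*\odi\stX$ (flat base change along $\pr$) and $f_*\bigl(\shF\otimes(\shI_l/\shI_{l-1})\bigr)\simeq i^*\shF$ (the projection formula for the closed immersion $i$ together with $f\circ i=\id_\stX$) — plus the routine verification that the quotients $\shG_l$ remain locally free.
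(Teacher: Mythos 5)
Your proof is correct and follows essentially the same route as the paper: both arguments apply the exact functor $f_*(\shF\otimes p^*(-))$ (exactness coming from flatness of the projection $p$, local freeness of $\shF$, and affineness of $f$) to a composition series of $A$ as a module over itself, and identify $\Psi(k)\simeq i^*\shF$. The only cosmetic difference is that you phrase the composition series as an increasing filtration by ideals and then pass to quotients, whereas the paper works directly with the decreasing series of surjections $A=A_N\arr\cdots\arr A_0=0$.
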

\begin{proof}
 Consider a decomposition series of $A$-modules
\[
A=A_N \arr A_{N-1} \arr \cdots \arr A_1 \arr A_0=0\, ;
\]
the above maps are surjective with $\Ker(A_l \arr A_{l-1})\simeq k$ as $A$-modules for all $1\leq l \leq N$. Let $p\colon \stX\times \Spec A\arr \Spec A$ be the projection; consider the functor 
$$
\Psi=f_*(\shF\otimes p^*(-)) \colon \Mod A \arr \QCoh(\stX)\, .
$$
Since $p$ is flat, $\shF$ is a vector bundle, and as $f$ is affine the functor $\Psi$ is exact. Moreover $\Psi(A)=f_*\shF$ and
\[
\Psi(k)=f_*(\shF\otimes p^*k) \simeq f_*(\shF\otimes i_*\odi\stX)\simeq f_*i_*i^*\shF \simeq i^*\shF\, .
\]
Applying $\Psi$ to the above sequence of $A$-modules we find the desired sequence.
\end{proof}

\begin{lem}\label{push of ess finite is still ess finite}
 Let $\stX$ be a pseudo-proper and inflexible algebraic stack of finite type over a
field $k$ of positive characteristic, such that $\dim_k \Hl^1(\stX,E)<\infty$ for all
vector bundles $E$ on $\stX$. Let $f\colon \stY\arr \stX$ be an essentially finite
cover. Then for all maps $\phi\colon \stY\arr \Phi$ to a finite stack over $k$, and
for all $W\in \Vect(\Phi)$, the vector bundle $f_*\phi^*W$ is essentially finite
in $\Vect(\stX)$.
\end{lem}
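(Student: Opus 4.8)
The plan is to exhibit a surjective cover $g\colon\stX'\arr\stX$ over which $f_*\phi^*W$ becomes free, and then apply Theorem \ref{key thm}. Decomposing $\Phi$ into its connected components we reduce to $\Phi$ connected (the corresponding components of $\stY$ remain essentially finite covers of $\stX$, as $\EF(\Vect(\stX))$ is abelian by Theorem \ref{monodromy essentially finite}). Let $u\colon\stX\arr\Gamma$ be the monodromy gerbe of $f_*\odi\stY$; by Lemma \ref{trivial section iff gerbe}, $\stY\simeq\stX\times_\Gamma\Delta$ for a cover $\Delta\arr\Gamma$. The finite gerbe $\Gamma$ has a point over some finite extension $k_1/k$ (a closed point of a finite $k$-scheme covering $\Gamma$), and $\Spec k_1\arr\Gamma$ is a surjective, representable, finite flat cover of $\Gamma$; set $\stX_1:=\stX\times_\Gamma\Spec k_1$, a surjective cover of $\stX$. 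Then $\stY\times_\stX\stX_1\simeq\stX_1\times_{k_1}\Spec C_1$, where $C_1$ is the finite $k_1$-algebra with $\Spec C_1=\Spec k_1\times_\Gamma\Delta$. After replacing $k_1$ by a finite extension $K$ over which the residue fields of all local factors of $C_1$ become trivial, and enlarging $\stX_1$ to $\stX_1\times_{k_1}\Spec K$ (still a surjective cover of $\stX$), we obtain $\stY\times_\stX\stX_1\simeq\coprod_\nu(\stX_1\times_K\Spec C_\nu)$ with each $C_\nu$ finite local over $K$ with residue field $K$; by flat base change along $\stX_1\arr\stX$, $(f_*\phi^*W)|_{\stX_1}\simeq\bigoplus_\nu(f_\nu)_*F_\nu$, where $f_\nu\colon\stX_1\times_K\Spec C_\nu\arr\stX_1$ is the projection and $F_\nu$ the restriction of $\phi^*W$.

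Next, for each $\nu$, Lemma \ref{decomposition series lemma} (over the base field $K$) gives a chain of surjections of vector bundles from $(f_\nu)_*F_\nu$ down to $0$ all of whose kernels are isomorphic to $i_\nu^*F_\nu$, where $i_\nu\colon\stX_1\inj\stX_1\times_K\Spec C_\nu$ is the closed point; and $i_\nu^*F_\nu\simeq\theta_\nu^*W$ for the composite $\theta_\nu\colon\stX_1\arr\stX_1\times_K\Spec C_\nu\arr\stY\arr\Phi$ (the closed immersion, the projection, then $\phi$). Thus $(f_*\phi^*W)|_{\stX_1}$ has a finite filtration by surjections of vector bundles whose successive kernels are pullbacks of $W$ from $\Phi$ along the finitely many $\theta_\nu$. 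Now pick a surjective cover $\rho\colon T\arr\Phi$ with $T$ a finite connected $k$-scheme; then $\rho^*W$ is free on $T$ (a projective module over an Artinian local ring). Taking the fibre product of $\stX_1$ with $T$ over $\Phi$ successively along each $\theta_\nu$ produces a surjective cover $\stX_2\arr\stX_1$ on which every $\theta_\nu^*W$ is free, so $(f_*\phi^*W)|_{\stX_2}$ inherits a finite filtration by surjections of vector bundles with free kernels. Since $\dim_k\Hl^1(\stX_2,E)<\infty$ for all vector bundles $E$ (this property descends to covers) and $\car k>0$, Lemma \ref{extensions of O} yields a surjective cover $\stX_3\arr\stX_2$ with $(f_*\phi^*W)|_{\stX_3}$ free. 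As $\stX_3\arr\stX$ is then a surjective cover trivialising $f_*\phi^*W$, Theorem \ref{key thm} gives that $f_*\phi^*W$ is essentially finite on $\stX$.

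The main obstacle is the first step: Lemma \ref{decomposition series lemma} applies only to covers of the shape $\stX_1\times_K\Spec C$ with $C$ local of residue field $K$, so the real content is to bring the arbitrary essentially finite cover $f$ into that form, which is done by base changing along a (sufficiently large) point of its monodromy gerbe $\Gamma$. This is exactly where pseudo-properness and inflexibility of $\stX$ enter, through Lemma \ref{trivial section iff gerbe}; the one genuinely fussy point is arranging the finite field extension $K$ so that each $C_\nu$ is local with residue field precisely $K$.
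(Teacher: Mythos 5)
Your proposal is correct and follows essentially the same route as the paper's proof: base change along a point of the monodromy gerbe $\Gamma$ (via Lemma \ref{trivial section iff gerbe}) to reduce $f$ to a projection $\stX'\times\Spec C\arr\stX'$ with $C$ finite local, apply Lemma \ref{decomposition series lemma}, trivialise the graded pieces by pulling back a finite atlas of $\Phi$, and conclude with Lemma \ref{extensions of O} and Theorem \ref{key thm} applied on the original $\stX$. The only differences (working over the enlarged field $K$ rather than descending to $k$, and handling the local factors in parallel rather than splitting $\stY$) are cosmetic.
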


\begin{proof}
To avoid problems with different ranks, we first observe that the rank of $W$
can be assumed to be constant, for instance, by considering the connected components of
$\Phi$. By Lemma \ref{trivial section iff gerbe} we have a Cartesian diagram
  \[
  \begin{tikzpicture}[xscale=1.8,yscale=-1.2]
    \node (A0_0) at (0, 0) {$\stY$};
    \node (A0_1) at (1, 0) {$\Delta$};
    \node (A1_0) at (0, 1) {$\stX$};
    \node (A1_1) at (1, 1) {$\Gamma$};
    \path (A0_0) edge [->]node [auto] {$\scriptstyle{v}$} (A0_1);
    \path (A1_0) edge [->]node [auto] {$\scriptstyle{u}$} (A1_1);
    \path (A0_1) edge [->]node [auto] {$\scriptstyle{}$} (A1_1);
    \path (A0_0) edge [->]node [auto] {$\scriptstyle{f}$} (A1_0);
  \end{tikzpicture}
  \]
where $\Gamma$ is a finite gerbe.
We will prove that there exists a surjective cover $s\colon \stX'\arr \stX$ such
that $s^*f_*\phi^*W$ is free on the connected components of $\stX'$;
this will be done by only assuming that there is a Cartesian diagram as above
with $\Gamma$ a finite gerbe and $f$ finite, but without requiring that $\stX$
is inflexible, as this will allow us to replace $\sX$ by any surjective cover of it. The
lemma then follows from Theorem \ref{key thm}.

As mentioned above, if $s'\colon \stT'\arr \stX$ is a surjective cover 
we can always replace $\stX$ by $\stT'$. Let $L/k$ be a finite field extension 
with a map $\Spec L\arr \stY$. The base change of $\Spec L \arr \stY \arr \Delta 
\arr \Gamma$ along $\stX\arr \Gamma$ is a surjective cover $\stQ\arr \stX$. 
Replacing $\stX$ by $\stQ$ we can assume that $\stX\arr \Gamma$ factors as 
$\stX\arr \Spec L \arr \Gamma$. This means that $\stY\arrdi f \stX$ is the 
projection $\stX\times_L A\arr \stX$, where $A/L$ is a finite $L$-algebra. Since 
$\phi$ factors as $\stY \arr \Phi\times_k L \arr \Phi$ we can assume that $L=k$. 
Extending again $k$ we can further assume that $A$ is a product of local 
$k$-algebras with residue field $k$. Splitting $\stY$ according to the 
decomposition of $A$ we can assume that $A$ is also local. Let $i\colon \stX \arr 
\stY=\stX\times A$ be the inclusion corresponding to $A\arr k$.

Finally consider a surjective cover $T\arr \Phi$ from a finite scheme and the $2$-Cartesian diagram
  \[
  \begin{tikzpicture}[xscale=1.8,yscale=-1.2]
    \node (A0_0) at (0, 0) {$\stZ$};
    \node (A0_2) at (2, 0) {$T$};
    \node (A1_0) at (0, 1) {$\stX$};
    \node (A1_1) at (1, 1) {$\stY$};
    \node (A1_2) at (2, 1) {$\Phi$};
    \path (A0_2) edge [->]node [auto] {$\scriptstyle{}$} (A1_2);
    \path (A1_0) edge [->]node [auto] {$\scriptstyle{i}$} (A1_1);
    \path (A0_0) edge [->]node [auto] {$\scriptstyle{}$} (A1_0);
    \path (A1_1) edge [->]node [auto] {$\scriptstyle{\phi}$} (A1_2);
    \path (A0_0) edge [->]node [auto] {$\scriptstyle{}$} (A0_2);
  \end{tikzpicture}
  \]
Replacing $\stX$ by $\stZ$ we can assume that $\phi \circ i\colon \stX\arr \Phi$
factors through a finite scheme, which in particular implies that $i^*\phi^*W$ is free.
Applying Lemma \ref{decomposition series lemma} to $\shF=\phi^*W$ we obtain a sequence
of surjective homomorphisms of quasi-coherent sheaves on $\stX$
\[
f_*(\phi^*W)=\shG_N \arr \shG_{N-1} \arr \cdots \arr \shG_1 \arr \shG_0=0
\]
with free kernels. The final cover trivializing $f_*(\phi^*W)$ exists
due to Lemma \ref{extensions of O}.
\end{proof}

\begin{rmk}
Assume $\car k = 0$. Then the proof of Lemma \ref{push of ess finite is still ess finite}
would not work anymore. The problem is that  Lemma \ref{extensions of O}  holds only in positive characteristic:

Let $\sE$xt$(\sO,\sO)$ be the stack over $\Aff/k$ which associates with each $T\in\Aff/k$ the groupoid of extensions of the form 
$$0\arr \sO_T\arr E\arr \sO_T\arr 0$$ where $E$ is a quasi-coherent sheaf on $T$. A morphism between two extensions is given by a commutative diagram $$\xymatrix{0\ar[r]&\sO_T\ar[r]\ar@{=}[d]&E\ar[d]^\phi\ar[r]&\sO_T\ar@{=}[d]\ar[r]&0\\0\ar[r]&\sO_T\ar[r]&E'\ar[r]&\sO_T\ar[r]&0}$$
One checks readily that $\sE$xt$(\sO,\sO)$ is a trivial gerbe under $\G_a$, or in other words $\sE$xt$(\sO,\sO)=\Bi \G_a$.
Now let $\stX=X$ be a proper geometrically connected reduced scheme over $k$ which admits a non-trivial extension of quasi-coherent sheaves:
$$0\arr \sO_X\arr E\arr \sO_X\arr 0$$
If the conclusion of Lemma \ref{extensions of O} was true in this case, then $E$ is essentially finite by Theorem \ref{key thm}. Let $\Gamma$ be the monodromy gerbe of $E$ in $\Ess(\Vect(X))$. Then we have the following commutative diagram 
$$\xymatrix{X\ar[rr]\ar[dr]&&\sE\text{xt}(\sO,\sO)=\Bi\G_a\\&\Gamma\ar[ur]_-\lambda&}$$
due to the full faithfulness of the pullback $\Vect(\Gamma)\arr \Ess(\Vect(X))$ and the fact that $\sO_X$ and $E$ come from  $\Vect(\Gamma)$. The map $\lambda$ is non-trivial because its composition with $X\arr \Gamma$ is non-trivial. But by \cite[Chapitre IV, \S 2, $n^o$1, 1.1,  pp. 483]{DG}  $\G_a$ has no non-trivial finite subgroup in characteristic 0 and therefore $\lambda$ has to be trivial, a contradiction.

However, Lemma \ref{push of ess finite is still ess finite} still holds true in characteristic 0 when $\sY$ is inflexible. This is due to Lemma \ref{push of ess finite for etale} and Proposition \ref{main thm ess finite etale} below.
\end{rmk}

\begin{lem}\label{push of ess finite for etale}
 Let $\stX$ be a pseudo-proper and inflexible algebraic stack of finite type over $k$,
and let $f\colon \stY\arr \stX$ be an \'etale surjective cover. Then for all maps
$\phi\colon \stY\arr \Phi$ to a finite (respectively, finite and \'etale) stack over
$k$ and for all $W\in \Vect(\Phi)$, the vector bundle $f_*(\phi^*W)$ is essentially
finite (respectively, essentially finite with \'etale monodromy gerbe) in
$\Vect(\sX)$. In particular, $f$ is essentially finite.
\end{lem}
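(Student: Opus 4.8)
The plan is to mimic the proof of Lemma~\ref{push of ess finite is still ess finite}, replacing the use of Theorem~\ref{key thm} by Remark~\ref{key thm for torsors} — which is available in every characteristic and yields the stronger conclusion about \'etale monodromy — and replacing the characteristic-$p$-only Lemma~\ref{extensions of O} by the classical \'etale Galois closure of $f$. First I would record two reductions: passing to the connected components of $\Phi$ we may assume $W$ has constant rank, and since in characteristic $0$ every finite stack over $k$ is automatically \'etale, it suffices to treat the case ``$\Phi$ finite, $\car k>0$'' via Theorem~\ref{key thm} and the case ``$\Phi$ finite \'etale'' via Remark~\ref{key thm for torsors}. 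In both cases the goal becomes: produce a surjective cover $s\colon\stX'\arr\stX$, which is moreover \'etale when $\Phi$ is \'etale, such that $s^*(f_*\phi^*W)$ is free on the connected components of $\stX'$.

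To build such an $s$ I would use that $f\colon\stY\arr\stX$, being a finite \'etale cover of an inflexible stack, admits an \'etale Galois closure exactly as invoked in Remark~\ref{key thm for torsors}: a finite \'etale surjective cover $g\colon\stX_1\arr\stX$ factoring through $f$ for which the pullback $\stX_1\times_\stX\stY\arr\stX_1$ is a trivial cover, i.e. a disjoint union of copies of $\stX_1$. Crucially this does \emph{not} presuppose that $f_*\odi\stY$ is essentially finite. Since $f$ is affine and $g$ is flat, flat base change gives $g^*(f_*\phi^*W)\simeq\bigoplus_i\psi_i^*W$, where the $\psi_i\colon\stX_1\arr\Phi$ are the composites of $\phi$ with the finitely many $\stX_1$-points of $\stX_1\times_\stX\stY$; thus $g^*(f_*\phi^*W)$ is a finite direct sum of pullbacks of $W$ from the finite stack $\Phi$. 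Then I would trivialize each $\psi_i^*W$ by choosing a surjective cover $\rho\colon T\arr\Phi$ from a finite $k$-scheme $T$ — taken \'etale when $\Phi$ is \'etale — and noting that the pullback of $\psi_i^*W$ to $\stX_1\times_{\Phi,\psi_i}T$ agrees with the pullback of $\rho^*W$, which is free on the connected components of $T$ since finitely generated projective modules over Artinian local rings are free. Taking $\stX'$ to be the fibre product over $\stX_1$ of the covers $\stX_1\times_{\Phi,\psi_i}T$ and $s\colon\stX'\arr\stX_1\arr\stX$ the composite, $s^*(f_*\phi^*W)$ is free on connected components, and $s$ is \'etale whenever $\Phi$ is.

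This finishes the reduction, hence the Lemma: when $\Phi$ is finite and \'etale (in particular whenever $\car k=0$), Remark~\ref{key thm for torsors} applied to the \'etale cover $s$ shows $f_*\phi^*W$ is essentially finite with \'etale monodromy gerbe; when $\car k>0$ and $\Phi$ is only finite, Theorem~\ref{key thm} applied to $s$ shows $f_*\phi^*W$ is essentially finite. Taking $\Phi=\Spec k$, $\phi$ the structure morphism and $W=k$ gives $f_*\odi\stY$ essentially finite, i.e. $f$ is an essentially finite cover. The one genuine difference from Lemma~\ref{push of ess finite is still ess finite}, and the point needing care, is that essential finiteness of $f_*\odi\stY$ is part of the conclusion here, so Lemma~\ref{trivial section iff gerbe} is unavailable at the outset and Lemma~\ref{extensions of O} is unavailable at the end; both are circumvented by the \'etale Galois closure of $f$, which trivializes $f$ with no essential-finiteness input and leaves only pullbacks from the finite stack $\Phi$ to trivialize. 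I expect the only delicate bookkeeping to be keeping that last trivializing cover \'etale when $\Phi$ is \'etale, so that the refined statement about \'etale monodromy survives.
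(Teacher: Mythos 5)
Your argument is essentially the paper's own proof: both trivialize $f$ by a further surjective \'etale cover of $\stX$ (the paper simply posits the Cartesian diagram $\bigsqcup_i\stZ\to\stZ\to\stX$ where you invoke the \'etale Galois closure), identify the pullback of $f_*\phi^*W$ with a direct sum of pullbacks of $W$ from $\Phi$ via flat base change, kill those summands by pulling back a finite atlas of $\Phi$ and taking the fibre product of the resulting covers over the base, and then conclude by Theorem~\ref{key thm} in positive characteristic and by Remark~\ref{key thm for torsors} when the whole trivializing cover can be chosen \'etale. The only slip is your assertion that in characteristic $0$ every finite stack is \'etale --- $\Spec k[x]/(x^2)$ is a finite non-\'etale stack in any characteristic --- but this is used only to organize the case division and not in the construction itself, and the paper's proof treats the residual case of a non-\'etale $\Phi$ in characteristic $0$ in exactly the same implicit way.
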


\begin{proof}
 There exists a Cartesian diagram
   \[
  \begin{tikzpicture}[xscale=1.8,yscale=-1.2]
    \node (A0_0) at (0, 0) {$\sqcup_i \stZ$};
    \node (A0_1) at (1, 0) {$\stY$};
    \node (A1_0) at (0, 1) {$\stZ$};
    \node (A1_1) at (1, 1) {$\stX$};
    \path (A0_0) edge [->]node [auto] {$\scriptstyle{a}$} (A0_1);
    \path (A1_0) edge [->]node [auto] {$\scriptstyle{r}$} (A1_1);
    \path (A0_1) edge [->]node [auto] {$\scriptstyle{f}$} (A1_1);
    \path (A0_0) edge [->]node [auto] {$\scriptstyle{b}$} (A1_0);
  \end{tikzpicture}
  \]
where $r\colon \stZ\arr \stX$ is an \'etale surjective cover. Since $a^*\phi^*W$ also
comes from the finite stack $\Phi$, taking a finite atlas of $\Phi$ we can find a
surjective cover $\lambda\colon \sU\to \bigsqcup_i \stZ$ which trivializes it. Denote
by $\sU_i$ the inverse image of the $i$-th piece of $\bigsqcup_i \stZ$ under $\lambda$. Then $r^*f_*\phi^*W=b_*a^*\phi^*W$   is trivialized by the surjective cover $\sU_1\times_\sZ\times\cdots\times_\sZ\sU_n\arr \sZ$. Thus, by Remark
\ref{key thm for torsors}, $f_*\phi^*W$ is essentially finite and it has an \'etale
monodromy gerbe if $\Phi$ is \'etale (so that $\lambda$ can also be chosen
to be \'etale).
\end{proof}

\begin{proof}[Proof of Theorem \ref{main thm Nori gerbe of ess finite cover}]
 By Lemma \ref{trivial section iff gerbe} there are  $2$-Cartesian diagrams
  \[
  \begin{tikzpicture}[xscale=1.8,yscale=-1.2]
    \node (A0_0) at (0, 0) {$\stY$};
    \node (A0_1) at (1, 0) {$\Pi$};
    \node (A0_2) at (2, 0) {$\Delta$};
    \node (A1_0) at (0, 1) {$\stX$};
    \node (A1_1) at (1, 1) {$\Pi^\NN_\stX$};
    \node (A1_2) at (2, 1) {$\Gamma$};
    \path (A0_0) edge [->]node [auto] {$\scriptstyle{\beta}$} (A0_1);
    \path (A0_1) edge [->]node [auto] {$\scriptstyle{}$} (A0_2);
    \path (A1_0) edge [->]node [auto] {$\scriptstyle{\alpha}$} (A1_1);
    \path (A1_1) edge [->]node [auto] {$\scriptstyle{}$} (A1_2);
    \path (A0_2) edge [->]node [auto] {$\scriptstyle{}$} (A1_2);
    \path (A0_0) edge [->]node [auto] {$\scriptstyle{f}$} (A1_0);
    \path (A0_1) edge [->]node [auto] {$\scriptstyle{\pi}$} (A1_1);
  \end{tikzpicture}
  \]
where $\Gamma$ is the monodromy gerbe of $f_*\odi\stY\in \EF(\Vect(\stX))$. It also follows from Lemma \ref{trivial section iff gerbe} that $\Pi$ is  a gerbe
if $\stY$ is inflexible.

For the converse assume that $\Pi$ is a gerbe. By Remark \ref{push of O Nori gerbe} and
Lemma \ref{vector bundles of an extension}, the pullback functor $\beta^*\colon \Vect(\Pi)\arr \Vect(\stY)$ is fully faithful with essential image the full subcategory $\shC$ of $\Vect(\stY)$ of vector bundles $V$ such that $f_*V\in\EF(\Vect(\stX))$.
Since $\Pi\arr \Pi^\NN_\stX$ is faithful the gerbe $\Pi$ is profinite, so
we have $\shC\subseteq \EF(\Vect(\stY))$.
We will show that this is an equality and that $\stY$ is inflexible. This
would immediately imply that $\stY\arr \Pi$ is the Nori fundamental gerbe. 
Let $\phi\colon \stY\arr \Phi$ be a map to a finite stack. Using
Lemma \ref{push of ess finite for etale} (notice that since $\Pi$ is gerbe, $\Delta$ is also a gerbe, so if $\car k=0$ then
$\Delta\arr\Gamma$ and $f\colon\sY\arr\sX$ are \'etale covers) and Lemma \ref{push of ess finite is still ess finite} it
follows that $\phi^*W \in \shC$ for all $W\in \Vect(\Phi)$. Thus the pullback
by $\phi\colon \stY \arr \Phi$ has a factorization
\[
\Vect(\Phi) \arr \Vect(\Pi) \simeq \shC \subseteq \Vect(\stY)\, .
\]
By Remark \ref{Tannakian reconstruction} one gets a factorization $\stY\arr \Pi \arr \Phi$ as
required. This shows that $\stY\arr \Pi$ is a Nori fundamental gerbe and, in particular, $\stY$
is inflexible. Since $\stY$ is pseudo-proper, all essentially finite vector bundles on $\stY$ are
pullbacks from some finite gerbe. Thus the above factorization also implies the equality
$\shC\,=\,\EF(\Vect(\stY))$.

Notice that if $\stY$ is inflexible then one always has $\Hl^0(\odi\stY)=k$ (see
Remark \ref{infl then HO=k}).

\emph{The \'etale case.} Assume that $f$ is \'etale and that $\Hl^0(\odi\stY)=k$. By
Lemma \ref{vector bundles of an extension} we have $\Hl^0(\odi\Delta)=k$. In particular,
$\Delta$ is geometrically connected. Since $\Delta\arr \Gamma$ is \'etale, it follows that $\Delta$ is
also geometrically reduced. Using \cite[Proposition 4.3]{BV} we conclude that $\Delta$ and
therefore $\Pi$ are gerbes. Thus $\stY$ is inflexible. By Lemma
\ref{push of ess finite for etale} we see that $\Gamma$ is \'etale. In particular
there are $2$-Cartesian diagrams
  \[
  \begin{tikzpicture}[xscale=2,yscale=-1.2]
    \node (A0_0) at (0, 0) {$\stY$};
    \node (A0_1) at (1, 0) {$\Pi^\NN_\stY$};
    \node (A0_2) at (2, 0) {$\Pi'$};
    \node (A0_3) at (3, 0) {$\Delta$};
    \node (A1_0) at (0, 1) {$\stX$};
    \node (A1_1) at (1, 1) {$\Pi^\NN_\stX$};
    \node (A1_2) at (2, 1) {$\Pi^{\NN,\et}_\stX$};
    \node (A1_3) at (3, 1) {$\Gamma$};
    \path (A0_0) edge [->]node [auto] {$\scriptstyle{}$} (A0_1);
    \path (A0_1) edge [->]node [auto] {$\scriptstyle{}$} (A0_2);
    \path (A1_0) edge [->]node [auto] {$\scriptstyle{}$} (A1_1);
    \path (A0_3) edge [->]node [auto] {$\scriptstyle{}$} (A1_3);
    \path (A1_1) edge [->]node [auto] {$\scriptstyle{}$} (A1_2);
    \path (A0_2) edge [->]node [auto] {$\scriptstyle{}$} (A1_2);
    \path (A0_0) edge [->]node [auto] {$\scriptstyle{f}$} (A1_0);
    \path (A0_1) edge [->]node [auto] {$\scriptstyle{}$} (A1_1);
    \path (A1_2) edge [->]node [auto] {$\scriptstyle{}$} (A1_3);
    \path (A0_2) edge [->]node [auto] {$\scriptstyle{}$} (A0_3);
  \end{tikzpicture}
  \]
We want to show that $\Pi'\,=\,\Pi^{\NN,\et}_\stY$. By Lemma \ref{vector bundles of an extension} the pullback $\Vect(\Pi')\arr \Vect(\stY)$ is fully faithful and its essential image $\shC$ consists of vector bundles $V\in\Vect(\stY)$ such that $f_*V$ comes from $\Pi^{\NN,\et}_\stX$. One must show that the essential image $\shD$ of the fully faithful map $\Vect(\Pi^{\NN,\et}_\stY)\arr \Vect(\stY)$ coincides with $\shC$. The vector bundles in $\shD$ are the essentially finite vector bundles with \'etale monodromy gerbes. Since the map $\Pi'\arr\Pi^{\NN,\et}_\stX$ is faithful, it follows that $\Pi'$ is pro\'etale, so that $\shC\subseteq \shD$. The opposite inclusion instead follows
from Lemma \ref{push of ess finite for etale}. 

\emph{The torsor case.} Assume that $f$ is a torsor for a finite group scheme $G$. Since the regular
representation $k[G]$ generates $\Rep(G)$ as $k$-Tannakian category, there are $2$-Cartesian diagrams
$$
\xymatrix{\sY\ar[rr]^-\beta\ar[d]^-f&&\Pi\ar[d]^-\pi\ar[rr]&&\Delta\ar[rr]\ar[d]&&\Spec(k)\ar[d]\\ \sX\ar[rr]^-\alpha && \Pi_{\sX}^\NN\ar@{>>}[rr]&&\Gamma\ar[rr]^-{}&&\Bi G}
$$
and the map $\Gamma\arr \Bi G$ is faithful and, by \cite[Remark B.7]{TZ}, affine. In particular
$\Delta$ is a finite scheme, and by Lemma \ref{vector bundles of an extension} we can conclude
that $\Delta=\Spec(\Hl^0(\odi\stY))$. Now the theorem follows because $\stY$ is inflexible if and only if $\Delta$ is a gerbe.
 \end{proof}

\begin{proof}[Proof of Corollary \ref{main thm Nori gerbe pointed case}]
 The Nori gerbe of $\stX$ is $\Pi^\NN_\stX=\Bi \pi^\NN(\stX,x)$, and $\stX\arr \Bi \pi^\NN(\stX,x)$
maps $x$ to the trivial torsor. If $(\stY,y)\arrdi f (\stX,x)$ is an essentially finite cover, then
the extension $\Pi\arr \Pi^\NN_\stX$ defined in Theorem
\ref{main thm Nori gerbe of ess finite cover} is described by the $2$-Cartesian diagrams
   \[
  \begin{tikzpicture}[xscale=2.6,yscale=-1.2]
    \node (A0_0) at (0, 0) {$\Spec k$};
    \node (A0_1) at (1, 0) {$\pi^\NN(\stX,x)$};
    \node (A0_2) at (2, 0) {$\widetilde\stX$};
    \node (A0_3) at (3, 0) {$\Spec k$};
    \node (A1_1) at (1, 1) {$\stY_x$};
    \node (A1_2) at (2, 1) {$\stY$};
    \node (A1_3) at (3, 1) {$[\stY_x/\pi^\NN(\stX,x)]$};
    \node (A2_1) at (1, 2) {$\Spec k$};
    \node (A2_2) at (2, 2) {$\stX$};
    \node (A2_3) at (3, 2) {$\Bi\pi^\NN(\stX,x)$};
    \path (A0_0) edge [->]node [auto] {$\scriptstyle{1}$} (A0_1);
    \path (A0_1) edge [->]node [auto] {$\scriptstyle{}$} (A1_1);
    \path (A0_0) edge [->]node [auto] {$\scriptstyle{y}$} (A1_1);
    \path (A1_3) edge [->]node [auto] {$\scriptstyle{}$} (A2_3);
    \path (A0_1) edge [->]node [auto] {$\scriptstyle{}$} (A0_2);
    \path (A1_1) edge [->]node [auto] {$\scriptstyle{}$} (A1_2);
    \path (A2_2) edge [->]node [auto] {$\scriptstyle{}$} (A2_3);
    \path (A0_3) edge [->]node [auto] {$\scriptstyle{}$} (A1_3);
    \path (A0_2) edge [->]node [auto] {$\scriptstyle{}$} (A1_2);
    \path (A1_1) edge [->]node [auto] {$\scriptstyle{}$} (A2_1);
    \path (A1_2) edge [->]node [auto] {$\scriptstyle{}$} (A1_3);
    \path (A2_1) edge [->]node [auto] {$\scriptstyle{x}$} (A2_2);
    \path (A0_2) edge [->]node [auto] {$\scriptstyle{}$} (A0_3);
    \path (A1_2) edge [->]node [auto] {$\scriptstyle{f}$} (A2_2);
  \end{tikzpicture}
  \]
that is $\Pi=[\stY_x/\pi^\NN(\stX,x)]$. Again by Theorem \ref{main thm Nori gerbe of ess finite cover} we know that $\stY$ is inflexible if and only if $\Pi$ is a gerbe. On the other hand, the
following three conditions are equivalent:
\begin{itemize}
\item $\Pi$ is a gerbe,

\item $\Spec k \arr \Pi$ is faithfully flat, and

\item the orbit map $\pi^\NN(\stX,x)\arr \stY_x$ of $y$ is faithfully flat.
\end{itemize}
When these equivalent conditions hold, by Theorem \ref{main thm Nori gerbe of ess finite cover} we
have $\Pi=\Bi\pi^\NN(\stY,y)$ and $\stY_x \simeq \pi^\NN(\stX,x)/\pi^\NN(\stY,y)$. The equivalence of categories in the statement follows
easily from Proposition \ref{equiv for essentially finite covers}.

Let $f\colon (\stY,y)\arr (\stX,x)$ be an essentially finite cover with $\stY$ inflexible. If $f$ is
a torsor under a group $G$, using Lemma \ref{key product gerbes} (1) the last diagram in Theorem
\ref{main thm Nori gerbe of ess finite cover} tells that $\pi^\NN(\stY,y)$ is normal in
$\pi^\NN(\stX,x)$ with quotient $G$.
Conversely, if $\pi^\NN(\stY,y)$ is normal in $\pi^\NN(\stX,x)$ with quotient $G$, then using again
Lemma \ref{key product gerbes} (1), $\Bi \pi^\NN(\stY,y) \arr \Bi \pi^\NN(\stX,x)$ and its base change
$f\colon \stY\arr \stX$ is torsor under $G$.

For the last claim, considering the above Cartesian diagrams we see that the following three are
equivalent:
\begin{itemize}
\item $f$ is \'etale,

\item $\Pi=\Bi \pi^\NN(\stY,y)\arr \Bi\pi^\NN(\stX,x)$ is \'etale,

\item $\stY_x\simeq \pi^\NN(\stX,x)/\pi^\NN(\stY,y)\arr \Spec k$ is \'etale.
\end{itemize}
This completes the proof. 
\end{proof}

\begin{prop}\label{main thm ess finite etale}
 Let $\stX$ be a pseudo-proper and inflexible fibered category over $k$ and $f\colon\stY\arr\stX$ be a cover. The following are equivalent:
 \begin{enumerate}
  \item $\stY$ is inflexible, $f$ is essentially finite and $f_*\odi\stY$ has \'etale monodromy gerbe in $\EF(\Vect(\stX))$.
  \item $f$ is \'etale and $\Hl^0(\odi\stY)=k$.
 \end{enumerate}
\end{prop}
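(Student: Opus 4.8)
The plan is to prove the two implications separately; $(2)\Rightarrow(1)$ is close to what has already been established, and $(1)\Rightarrow(2)$ is the substantive direction.

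For $(2)\Rightarrow(1)$: assume $f$ is \'etale and $\Hl^0(\odi\stY)=k$. Then $\stY$ is nonempty, so $f$ is a surjective \'etale cover, and Lemma~\ref{push of ess finite for etale} applied to the structure morphism $\stY\arr\Spec k$ and $W=k$ shows that $f_*\odi\stY$ is essentially finite with \'etale monodromy gerbe; in particular $f$ is an essentially finite cover and the monodromy condition in (1) holds. It remains to show $\stY$ is inflexible, which is exactly the \'etale case of Theorem~\ref{main thm Nori gerbe of ess finite cover}; explicitly, Lemma~\ref{trivial section iff gerbe} furnishes a $2$-Cartesian square $\stY\arr\Delta$, $\stX\arr\Gamma$ with $v_*\odi\stY\simeq\odi\Delta$, so $\Hl^0(\odi\Delta)=\Hl^0(\odi\stY)=k$ and $\Delta$ is geometrically connected; since $\Delta\arr\Gamma$ is \'etale and $\Gamma$ is geometrically reduced, $\Delta$ is geometrically reduced, hence a gerbe by \cite[Proposition~4.3]{BV}, whence $\Pi=\Pi^\NN_\stX\times_\Gamma\Delta$ is a gerbe and $\stY$ is inflexible.

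For $(1)\Rightarrow(2)$: write $u\colon\stX\arr\Gamma$ for the monodromy gerbe of $f_*\odi\stY$, which is \'etale over $k$ by hypothesis and finite since $f$ is essentially finite. As $\stY$ is inflexible, Lemma~\ref{trivial section iff gerbe} gives a $2$-Cartesian square $\stY\arr\Delta$, $\stX\arr\Gamma$ with $\Delta$ a finite gerbe, $\Delta\arr\Gamma$ a cover, and $v_*\odi\stY\simeq\odi\Delta$. Because the square is $2$-Cartesian, $f$ is the base change of $\Delta\arr\Gamma$ along $u$; so once we know $\Delta$ is \'etale over $k$, the morphism $\Delta\arr\Gamma$ is a finite, flat, finitely presented morphism between stacks \'etale over $k$ (so $\Omega_{\Delta/\Gamma}=0$), hence \'etale, hence so is $f$. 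Moreover $\Hl^0(\odi\stY)=\Hl^0(\Delta,\odi\Delta)=k$, the last equality because $\Hl^0(\Gamma',\odi{\Gamma'})=k$ for any \'etale gerbe $\Gamma'$ over $k$ (check after base change to $\bar k$, where $\Gamma'$ becomes $\Bi H$ with $H$ finite \'etale). Thus everything reduces to the claim that $\Delta$ is \'etale over $k$.

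This claim is the \emph{main obstacle}. Here one uses that $\Delta\arr\Gamma$, being a cover, is affine, hence representable. Base changing to $\bar k$ and choosing compatible rational points, $\Gamma_{\bar k}=\Bi G$ with $G$ a finite \'etale---hence constant---group, $\Delta_{\bar k}=\Bi H$ with $H$ a finite group scheme, and $\Delta\arr\Gamma$ is induced by a homomorphism $\psi\colon H\arr G$. The fibre $\Spec\bar k\times_{\Bi G}\Bi H$ is the quotient stack of the finite \'etale scheme $G$ by the action of $H$ through $\psi$; it decomposes as a finite disjoint union of copies of $\Bi(\Ker\psi)$, and affineness of $\Delta\arr\Gamma$ forces this to be affine, so $\Ker\psi=1$. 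Then $\psi$ is a closed immersion of $H$ into the \'etale group $G$, so $H$ is \'etale and $\Delta_{\bar k}=\Bi H$---hence $\Delta$---is \'etale over $k$. (Equivalently, one may quote the characterization of affine morphisms of affine gerbes as those arising from monomorphisms of fundamental group schemes, cf.\ \cite[Remark~B.7]{TZ}, which is already used in the torsor case.) With the claim in hand, the conclusions of the previous paragraph complete $(1)\Rightarrow(2)$.
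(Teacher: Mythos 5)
Your proof is correct and follows essentially the same route as the paper: $(2)\Rightarrow(1)$ via Lemma~\ref{push of ess finite for etale} and the \'etale case of Theorem~\ref{main thm Nori gerbe of ess finite cover}, and $(1)\Rightarrow(2)$ via Lemma~\ref{trivial section iff gerbe}, the observation that $\Delta$ must be \'etale because $\Delta\arr\Gamma$ is affine hence faithful, and base change. Your fibre computation over $\bar k$ forcing $\Ker\psi=1$ is just an unpacking of the paper's one-line appeal to faithfulness of $\Delta\arr\Gamma$ (cf.\ \cite[Remark~B.7]{TZ}), so no essential difference.
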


\begin{proof}
The implication $(2)\then (1)$ follows from Lemma \ref{push of ess finite for etale} and Theorem \ref{main thm Nori gerbe of ess finite cover}.
For the converse, let $\Delta\arr \Gamma$ be as in Lemma \ref{trivial section iff gerbe}. Since $\stY$ is inflexible it follows that $\Delta$ is a gerbe. Moreover by hypothesis $\Gamma$ is \'etale. Thus $\Delta$ is \'etale too because $\Delta\arr\Gamma$ is faithful. By base change it follows that $f$ is \'etale.
\end{proof}

\section{Towers of torsors and their Galois closures}

Let $k$ be a base field and $G$ and $H$ be finite group schemes over $k$. In this section we 
introduce the notion of tower of torsors and Galois closure of a tower of torsors. At the end of 
the section Theorems \ref{Galois closure when G or H is etale} and \ref{main theorem 
for Galois closure} will be proved.

\begin{defn}\label{the stack B(G,H)}
A $(G,H)$-tower of torsors over a fibered category $\stX$ over $k$ is a sequence 
of map of fibered categories $\stZ\arrdi h \stY\arrdi g \stX$ where $g$ is a 
$G$-torsor and $h$ is an $H$-torsor. When $G,H$ are clear from the context we will 
just talk about a tower of torsors. The $(G,H)$-tower is called pointed over $k$ if
$\stZ\arr \stX$ (and therefore $\stY\arr \stX$) is a pointed cover over $k$.
 
We define the stack $\Bi(G,H)$ as the stack over $\Aff/k$ whose section
over an affine scheme $U$ is the groupoid of $(G,H)$-tower of torsors
over $U$. A morphism between two $(G,H)$-towers $\stZ\arrdi h \stY\arrdi g \stX$
and $\sZ\arrdi {h'}\sY'\arrdi {g'}\sX$ is  a commutative diagram
$$\xymatrix{\sZ\ar[r]^b\ar[d]_-h&\sZ'\ar[d]^-{h'}\\\sY\ar[r]^a&\sY'}$$
of $\sX-$maps such that $a$ is $G$-equivariant and $\sZ\arr \sZ'\times_{\sY'}\sY$ is $H$-equivariant. Clearly, $a,b$ are isomorphisms so that
the stack $\Bi(G,H)$ is a groupoid.

\end{defn}

Let us start with a preliminary remark:

\begin{rmk}\label{universal tower of BGH}
 A tower of torsors over a fibered category $\stX$ is the same as a map $\stX\arr \Bi(G,H)$. Moreover $\Bi(G,H)$ has a universal tower given by
   \[
  \begin{tikzpicture}[xscale=2.7,yscale=-1.2]
    \node (A0_1) at (1, 0) {$\stW$};
    \node (A0_2) at (2, 0) {$\Spec k$};
    \node (A1_1) at (1, 1) {$\Homsh_k(G,\Bi H)$};
    \node (A1_2) at (2, 1) {$\Bi H$};
    \node (A1_3) at (3, 1) {$\Spec k$};
    \node (A2_1) at (1, 2) {$\Bi(G,H)$};
    \node (A2_3) at (3, 2) {$\Bi G$};
    \path (A1_3) edge [->]node [auto] {$\scriptstyle{}$} (A2_3);
    \path (A0_1) edge [->]node [auto] {$\scriptstyle{}$} (A0_2);
    \path (A1_1) edge [->]node [auto] {$\scriptstyle{u}$} (A1_2);
    \path (A0_2) edge [->]node [auto] {$\scriptstyle{}$} (A1_2);
    \path (A1_1) edge [->]node [auto] {$\scriptstyle{}$} (A2_1);
    \path (A2_1) edge [->]node [auto] {$\scriptstyle{}$} (A2_3);
    \path (A1_2) edge [->]node [auto] {$\scriptstyle{}$} (A1_3);
    \path (A0_1) edge [->]node [auto] {$\scriptstyle{}$} (A1_1);
  \end{tikzpicture}
  \]
where $u$ is the restriction along $1:\Spec k\arr G$. The pullback of the above tower along a map $\stX\arr \Bi(G,H)$ yields exactly the tower encoded in the map $\stX\arr \Bi(G,H)$.
\end{rmk}

\begin{prop}\label{geometry of BGH}
 The stack $\Bi(G,H)$ is algebraic, locally of finite type over $k$ and has affine diagonal.
\end{prop}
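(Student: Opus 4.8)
The plan is to reduce the statement, via the $2$-Cartesian square of Remark \ref{universal tower of BGH}, to the corresponding properties of the stack $\Homsh_k(G,\Bi H)$. That remark provides a morphism $q\colon\Bi(G,H)\arr\Bi G$ sending a tower $\stZ\arrdi h\stY\arrdi g\stX$ to the $G$-torsor $g$, together with a $2$-Cartesian square whose bottom arrow is the universal $G$-torsor $\Spec k\arr\Bi G$, whose top arrow is $\Homsh_k(G,\Bi H)\arr\Bi(G,H)$, and whose left arrow is the structure morphism; indeed, once the $G$-torsor of a tower has been trivialized the remaining datum is an $H$-torsor on $G\times_k\stX$, that is, a map $\stX\arr\Homsh_k(G,\Bi H)$. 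Since $G$ is a finite group scheme over $k$, the morphism $\Spec k\arr\Bi G$ is faithfully flat and of finite presentation, hence an fppf covering, and $\Bi G$ is itself algebraic, of finite type over $k$ and with affine diagonal (because $G$ is affine and of finite type over $k$). Therefore, by fppf descent of algebraicity and of the properties ``locally of finite type'' and ``affine diagonal'' for morphisms, it will be enough to prove these three properties for $\Homsh_k(G,\Bi H)$ and then propagate them to $\Bi(G,H)$ through $q$.

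For the algebraicity of $\Homsh_k(G,\Bi H)$ I would invoke the general representability theorem for Hom-stacks, equivalently for restriction of scalars. The morphism $G\arr\Spec k$ is finite, flat and of finite presentation---in particular proper, flat, finitely presented and with finite diagonal---while $\Bi H\arr\Spec k$ is of finite type with affine (a fortiori quasi-compact and separated) diagonal; under these hypotheses $\Homsh_k(G,\Bi H)$ is an algebraic stack, locally of finite presentation over $k$ (Olsson, \emph{Hom stacks and restriction of scalars}; see also Hall--Rydh). This also settles ``locally of finite type'' for $\Homsh_k(G,\Bi H)$, hence---after descent along $q$ and composition with the finite type morphism $\Bi G\arr\Spec k$---for $\Bi(G,H)$.

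It then remains to check that the diagonal of $\Homsh_k(G,\Bi H)$ over $k$ is affine, from which I would extract the affineness of $\Delta_{\Bi(G,H)/k}$. Factor
\[
\Delta_{\Bi(G,H)/k}\colon\Bi(G,H)\arrdi{\Delta_q}\Bi(G,H)\times_{\Bi G}\Bi(G,H)\arr\Bi(G,H)\times_k\Bi(G,H),
\]
where the second arrow is the base change of $\Delta_{\Bi G/k}$ and hence affine since $G$ is affine; so it suffices that $\Delta_q$ be affine. As affineness is fppf-local on the target and the base change of $\Delta_q$ along the fppf covering $\Spec k\arr\Bi G$ is precisely $\Delta_{\Homsh_k(G,\Bi H)/k}$, we are reduced to the affineness of the latter. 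To see it, let $T$ be an affine scheme and $P_1,P_2$ two $H$-torsors on $G_T\coloneqq G\times_k T$; the sheaf on $T$ of morphisms $P_1\arr P_2$ of $H$-torsors over $G_T$ equals $\Isosh_{G_T}(P_1,P_2)$, since every morphism of torsors is an isomorphism. This is a torsor over $G_T$ under the $G_T$-group scheme $\Autsh_{G_T}(P_1)$, which is an fppf form of $H\times_k G_T$ and hence finite, in particular affine, over $G_T$; consequently $\Isosh_{G_T}(P_1,P_2)$ is affine over $G_T$, and passing to its sheaf of sections over $T$---i.e. its Weil restriction along the finite locally free morphism $G_T\arr T$---preserves affineness. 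Thus $\Delta_{\Homsh_k(G,\Bi H)/k}$ is affine; running the reductions backwards gives that $\Delta_{\Bi(G,H)/k}$ is affine, and together with the first two paragraphs this shows that $\Bi(G,H)$ is algebraic, locally of finite type over $k$, with affine diagonal.

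The only genuine obstacle I foresee is the precise invocation of the Hom-stack representability theorem, with hypotheses matched correctly to $G\arr\Spec k$ and $\Bi H\arr\Spec k$; the remaining ingredients---the identification of the fibre of $q$, fppf descent of algebraicity and of being locally of finite type, and the stability of affine morphisms under base change and of Weil restriction along finite locally free morphisms---are routine.
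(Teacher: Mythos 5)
Your proof is correct and follows essentially the same route as the paper: algebraicity and local finite type are pulled back from $\Homsh_k(G,\Bi H)$ (Hall--Rydh/Olsson) along the finite flat cover $\Homsh_k(G,\Bi H)\arr\Bi(G,H)$ of Remark \ref{universal tower of BGH}, and the affine diagonal reduces to a Weil restriction of an $\Isosh$ of $H$-torsors along the finite locally free map $G_T\arr T$. Your organization of the diagonal step (factoring through $\Delta_q$ and descending along $\Spec k\arr\Bi G$) is a slightly cleaner repackaging of the paper's direct computation with $\Isosh_{\Bi(G,H)}(\xi_1,\xi_2)$ after trivializing the $G$-torsors, but it is the same argument.
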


\begin{proof}
The stack $\Bi(G,H)$ is algebraic and locally of finite type over the field $k$ because 
$\Homsh_k(G,\Bi H)$ is so by \cite[Theorem 3]{HR} and the fact that there is a finite and 
flat map $$\Homsh_k(G,\Bi H)\arr \Bi(G,H)$$ by Remark \ref{universal tower of BGH}.
 
Now let $P_i \arr P'_i \arr T$, $i\,=\,1,2$, be two towers $\xi_i \in \Bi(G,H)(T)$
for some affine scheme $T$, and set $I\,=\,\Isosh_{\Bi(G,H)}(\xi_1,\xi_2)$.  We need
to show that $I$ is affine. We denote by $(-)_T$ the base change to $T$. Base changing
along $P'_1\times_T P'_2 \arr T$ allows us to assume that $P'_i=G_T$. In particular,
there is a map $a\colon I\arr G_T=\Autsh^G_T(G_T)$ and we want to show that it is
affine. If $W$ is the fiber of $a$ along a map $T\arrdi g G_T$, it is enough to show
that $W$ is affine. Let $\overline P_2\arr G_T$ be the $H$-torsor base change of
$P_2\arr G_T$ along the multiplication $G_T\arr G_T$ by $g$ and set $J\,=\,
\Isosh^H_{G_T}(P_1,\overline P_2) \arr G_T$. It is easy to see that $W\,=\,W_{G_T}(J)$,
where $W_{G_T}(J)$ is the Weil restriction of $J$ along $G_T \arr T$. The scheme $J$
is affine and finitely presented over $G_T$. Since $G_T\arr T$ is a cover
using a presentation of $J$ it is easy to write $W$ as a closed subscheme of an
affine space over $T$.
\end{proof}

\begin{lem}\label{towers are ess finite}
Let $\stX$ be a pseudo-proper and inflexible algebraic stack of finite type over 
$k$, and let $\stZ\arrdi h \stY\arrdi g \stX$ be a $(G,H)$-tower of torsors. If $\car k 
> 0$ assume that either $g$ or $h$ is \'etale or $\dim_k \Hl^1(\stX,E)<\infty$ for every
vector bundle $E$ on $\stX$. Then $g\circ h$ is an essentially finite cover.
\end{lem}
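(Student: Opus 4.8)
The plan is to prove that $(g\circ h)_*\odi\stZ\simeq g_*(h_*\odi\stZ)$ is essentially finite in $\Vect(\stX)$, which, since $g$ and $h$ are covers (being torsors under finite group schemes), is exactly the assertion that $g\circ h$ is an essentially finite cover. The starting observation is that if $v\colon\stY\arr\Bi H$ classifies the $H$-torsor $h$, then base change along the affine map $\Spec k\arr\Bi H$ gives $h_*\odi\stZ\simeq v^*(k[H])$ with $k[H]$ the regular representation; since $k[H]$ is a finite object of $\Vect(\Bi H)=\Rep H$ and $\Bi H$ is a finite stack (finite and \'etale precisely when $H$ is \'etale), $h_*\odi\stZ$ is a finite vector bundle on $\stY$. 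Moreover $g\colon\stY\arr\stX$, as a $G$-torsor, is itself an essentially finite cover of $\stX$ by the observation at the beginning of this section.

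I would then split according to the hypotheses. If $\car k=0$, or $\car k>0$ and $g$ is \'etale, then $g$ is an \'etale surjective cover (in characteristic $0$ finite group schemes are \'etale), and Lemma \ref{push of ess finite for etale} applied with $f=g$, the map $\phi=v\colon\stY\arr\Bi H$ to the finite stack $\Phi=\Bi H$, and $W=k[H]$ yields that $g_*(v^*k[H])=(g\circ h)_*\odi\stZ$ is essentially finite. If instead $\car k>0$ and $\dim_k\Hl^1(\stX,E)<\infty$ for every vector bundle $E$, the same conclusion follows by applying Lemma \ref{push of ess finite is still ess finite} to the essentially finite cover $g$ with the same $\phi$, $\Phi$ and $W$.

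The remaining, and main, case is $\car k>0$ with $h$ \'etale but neither of the previous hypotheses available, so that neither pushforward lemma applies to $g$. Here the plan is to trivialize $(g\circ h)_*\odi\stZ$ after a cover and invoke Theorem \ref{key thm}. After a harmless finite base extension $k'/k$ splitting both $G$ and $H$ — harmless because $\stX_{k'}\arr\stX$ is a surjective cover, a surjective cover of $\stX_{k'}$ composed with it is a surjective cover of $\stX$, and by flat base change $(g\circ h)_*\odi\stZ$ pulls back to the analogous pushforward of the base-changed tower — one may assume $G$ and $H$ constant. Flat base change along the $G$-torsor $g$ together with the splitting $\stY\times_\stX\stY\simeq\coprod_{\gamma\in G}\stY$ exhibits $g^*g_*(h_*\odi\stZ)$ as a finite direct sum of bundles of the form $w^*k[H]$ for maps $w\colon\stY\arr\Bi H$; each such $w$ classifies a finite \'etale $H$-torsor over $\stY$, on whose total space $w^*k[H]$ is free, so the fibre product over $\stY$ of these finitely many torsors is a surjective finite \'etale cover $\stY'\arr\stY$ on which $g^*g_*(h_*\odi\stZ)$ becomes free. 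Hence $(g\circ h)_*\odi\stZ$ becomes free on the surjective cover $\stY'\arr\stY\arrdi g\stX$, and Theorem \ref{key thm} gives that it is essentially finite in $\Vect(\stX)$.

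The genuinely delicate point is this last case: the two pushforward lemmas are adapted to the hypotheses they carry, and when only ``$h$ \'etale'' is on hand one cannot cite them for $g$ and must instead trivialize the pushforward by hand and fall back on Theorem \ref{key thm}; the only nuisance there, non-constancy of $G$ and $H$, is removed by the preliminary finite base change.
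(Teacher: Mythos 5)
Your treatment of the first two configurations --- $g$ \'etale (which includes all of characteristic $0$) via Lemma \ref{push of ess finite for etale}, and $\dim_k\Hl^1(\stX,E)<\infty$ via Lemma \ref{push of ess finite is still ess finite}, in both cases applied to $f=g$, $\phi=v\colon\stY\arr\Bi H$ and $W=k[H]$ --- is correct and matches the second half of the paper's own (very compressed) proof. The gap is in your ``remaining, and main, case'' ($\car k>0$, $h$ \'etale, $g$ not \'etale, no cohomological hypothesis). There is no finite (or any) base extension $k'/k$ after which a non-\'etale $G$ becomes constant: if $G$ contains $\mu_p$ or $\alpha_p$ it retains a nontrivial infinitesimal part over every field extension, so the decomposition $\stY\times_\stX\stY\simeq\coprod_{\gamma\in G}\stY$ on which your trivialization rests is not available --- one only has $\stY\times_\stX\stY\simeq G\times\stY$, a non-reduced thickening of finitely many copies of $\stY$ rather than a disjoint union of copies of $\stY$. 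As written, the case you yourself single out as the delicate one does not go through.

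What is missing is how to exploit that it is $H$ (not $G$) that is \'etale. A repair along your lines does exist: after a finite extension write $G$ as a disjoint union of spectra of local Artinian $k$-algebras with residue field $k$; each piece of $G\times\stY$ is then a nilpotent thickening of $\stY$, and since $H$ is \'etale, $H$-torsors are insensitive to nilpotent thickenings, so on each piece the relevant pullback of $k[H]$ descends to (the pullback along the projection of) an $H$-torsor on $\stY$; from there your fibre-product trivialization and Theorem \ref{key thm} do finish. But this observation is the whole point of the case, and it is absent from your argument. The paper packages it differently: its proof of Lemma \ref{towers are ess finite} defers the entire ``$g$ or $h$ \'etale'' case to Theorem \ref{Galois closure when G or H is etale}, which shows via Lemma \ref{automorphisms of the trivial tower} and Proposition \ref{etale case} that $\Bi(G,H)$ is then a finite neutral gerbe $\Bi Q$, so that $(g\circ h)_*\odi\stZ$ is pulled back from a finite gerbe and hence essentially finite with no appeal to Theorem \ref{key thm}; the finiteness of $Q$ when $H$ is \'etale is proved there by exactly the nilpotent-thickening argument above, in the form $W_G(H)\simeq W_{G_\red}(H)$ with $G_\red=G_\et$ finite \'etale.
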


\begin{proof} 
Consider the morphism $v\colon \stY \arr \Bi H$ corresponding to the $H$-torsor 
$h\colon \stZ \arr \stY$. One knows that $h_* \odi\stZ\,=\,v^* (k[H])$, where $k[H]$ 
denotes the regular representation. One then concludes the proof by Lemma \ref{push of ess finite is still ess 
finite} and Theorem \ref{Galois closure when G or H is etale}.
\end{proof}

\begin{lem}\label{automorphisms of a generic tower}
Let $P\arr G \arr \Spec k$ be a rational point $\xi$ of $\Bi(G,H)$ in which $G\arr \Spec k$ is the trivial $G$-torsor, and let
$p\in P(k)$ be a rational point mapping to $1\in G(k)$, and let $Q_\xi\,=\,\Autsh_{\Bi(G,H)}(\xi)$. Then
$Q_\xi$ is an affine group scheme of finite type over $k$, and there are exact sequences
 \[
     0\arr W_{G/k}(\Autsh^H_G(P)) \arr Q_\xi \arrdi \alpha G\comma \ \ 0\arr
     W' \arr W_{G/k}(\Autsh^H_G(P)) \arrdi \beta H
 \]
 where $\alpha$ forgets the automorphism of $P$, $W_{G/k}$ denotes the Weil
restriction and $\beta$ is the evaluation at $1\colon \Spec k \arr G$.
 
 There is a fully faithful map $\Bi Q_\xi \arr \Bi(G,H)$ sending the trivial torsor
to $\xi$ and whose image consists of the substack of towers locally isomorphic to $\xi$.
\end{lem}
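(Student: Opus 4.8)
The plan is to unwind Definition~\ref{the stack B(G,H)} for the object $\xi$, read off the two exact sequences directly, and obtain the morphism $\Bi Q_\xi\arr \Bi(G,H)$ from the general ``automorphism gerbe'' construction. First I would compute $Q_\xi(T)$ for an affine $k$-scheme $T$: by Definition~\ref{the stack B(G,H)} an automorphism of $\xi_T=(P_T\arr G_T\arr T)$ is a commutative square whose bottom arrow $a$ is an automorphism of the trivial $G$-torsor $G_T$ and whose top arrow induces an isomorphism of $H$-torsors $P_T\xrightarrow{\sim}a^{*}P_T$ over $G_T$. Since automorphisms of the trivial $G$-torsor are left translations, $\Aut^{G}_{T}(G_T)=G(T)$, so
\[
Q_\xi(T)=\bigl\{(a,\phi)\st a\in G(T),\ \phi\colon P_T\xrightarrow{\sim}a^{*}P_T\text{ an isomorphism of }H\text{-torsors over }G_T\bigr\},
\]
and $\alpha\colon Q_\xi\arr G$, $(a,\phi)\mapsto a$, is a homomorphism of group schemes whose kernel consists of the pairs $(\id,\phi)$ with $\phi\in\Aut^{H}_{G_T}(P_T)=\Autsh^{H}_{G}(P)(G_T)=W_{G/k}(\Autsh^{H}_{G}(P))(T)$; this gives the first exact sequence and identifies $\Ker\alpha$ with $W_{G/k}(\Autsh^{H}_{G}(P))$. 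That $Q_\xi$ is affine and of finite type over $k$ follows from Proposition~\ref{geometry of BGH}: since $\Bi(G,H)$ is algebraic, locally of finite type and has affine diagonal, $Q_\xi=\Spec k\times_{\Bi(G,H)\times_k\Bi(G,H)}\Spec k$ (along $(\xi,\xi)$ and the diagonal) is an affine $k$-group scheme of finite type. (One can also see this directly: as $H$ is finite, $\Autsh^{H}_{G}(P)\arr G$ is finite, hence its Weil restriction along the finite map $G\arr\Spec k$ is affine of finite type.)

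For the second sequence, $\beta$ is restriction of an automorphism $\psi$ of the $H$-torsor $P\arr G$ to the fibre over $1\colon\Spec k\arr G$, yielding $\psi|_{1}\in\Aut^{H}_{T}((P_1)_T)$, where $P_1$ is that fibre; the rational point $p\in P_1(k)$ trivialises $P_1$ and hence identifies $\Aut^{H}((P_1)_T)$ with $H(T)$, so that $\beta\colon W_{G/k}(\Autsh^{H}_{G}(P))\arr H$ is a homomorphism. Its kernel is by definition $W'$, giving the second exact sequence.

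For the last assertion I would use the universal $Q_\xi$-torsor $\Spec k\arr\Bi Q_\xi$: the tautological element $\id\in Q_\xi(Q_\xi)=\Aut_{\Bi(G,H)}(\xi)(Q_\xi)$ is a descent datum for $\xi$ along the fppf covering $\Spec k\arr\Bi Q_\xi$, and since $\Bi(G,H)$ is a stack it is effective, giving a morphism $F\colon\Bi Q_\xi\arr\Bi(G,H)$ with $F(\text{trivial torsor})=\xi$ (concretely, $F$ twists $\xi$ by the given $Q_\xi$-torsor). To see that $F$ is fully faithful, for $Q_\xi$-torsors $T_1,T_2$ over an affine $U$ one checks that the map of fppf sheaves $\Isosh(T_1,T_2)\arr\Isosh(FT_1,FT_2)$ is an isomorphism; this can be tested fppf-locally on $U$, where both torsors become trivial and the map is the identity of $(Q_\xi)_U=\Autsh_{\Bi(G,H)}(\xi)_U$. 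Being fully faithful, $F$ is an equivalence onto a substack of $\Bi(G,H)$, and its objects are exactly the towers fppf-locally isomorphic to $\xi$: objects of $\Bi Q_\xi$ are fppf-locally trivial, while conversely any $\eta\in\Bi(G,H)(U)$ locally isomorphic to $\xi_U$ is $F$ applied to the $(Q_\xi)_U$-torsor $\Isosh_{\Bi(G,H)}(\xi_U,\eta)$, which is representable and a torsor by Proposition~\ref{geometry of BGH}.

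The only genuine work here is the bookkeeping of equivariance in the description of $Q_\xi(T)$ and the identification of $\Ker\alpha$ with the Weil restriction, together with making the descent argument for $F$ precise; once Proposition~\ref{geometry of BGH} is in hand the rest is formal, so I do not anticipate a serious obstacle.
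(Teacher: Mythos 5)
Your proof is correct and follows essentially the same route as the paper's, which simply cites Proposition \ref{geometry of BGH} for affineness and finite type, declares the first sequence ``clear,'' uses the point $p$ to identify the fibre of $\Autsh^H_G(P)$ over $1\colon\Spec k\arr G$ with $H$ in order to define $\beta$, and calls the last claim ``standard''; you have supplied exactly the details the paper omits. (Only a notational slip: $\Autsh_{\Bi(G,H)}(\xi)$ is $\Spec k\times_{\xi,\,\Bi(G,H),\,\xi}\Spec k$, equivalently the pullback of the affine diagonal of $\Bi(G,H)$ along $(\xi,\xi)\colon \Spec k\arr \Bi(G,H)\times_k\Bi(G,H)$, not a fibre product of two copies of $\Spec k$ over $\Bi(G,H)\times_k\Bi(G,H)$.)
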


\begin{proof}
The scheme $Q_\xi$ is affine of finite type by Proposition \ref{geometry of BGH}.
The first sequence is clear. The map $\beta$ is well-defined because the point
$p\in P(k)$ gives an $H$-equivariant isomorphism between $H$ and the base change of
$P\arr G$ along the identity: the base change of $\Autsh^H_G(P) \arr G$ along the
identity is $\Autsh^H_k(H)=H$. The last claim is standard.
\end{proof}

More can be said in the case of the trivial tower.

\begin{lem}\label{automorphisms of the trivial tower}
 Let $Q$ be the sheaf of automorphisms of the trivial tower $$G\times H\arr G\arr
\Spec k$$ in $\Bi(G,H)$. Then $Q$ is an affine group scheme of finite type,
 \[
 Q=W_G(H)\ltimes G
 \]
 where $W_G(H)$ is the Weil restriction of the group scheme $H\times_kG$ over $G$ along $G\arr 
\Spec k$ with $G$ acting on $W_G(H)$ via automorphisms of the base. Evaluation at
$1\in G$ yields a map $W_G(H)\arr H$ and, if $W'$ is the kernel, then
 \[
 W_G(H)= W' \ltimes H\, .
 \]
 The fully faithful map $\Bi Q\arr \Bi(G,H)$ of Lemma \ref{automorphisms of a generic tower} corresponds to the pointed tower of Nori-reduced torsors
 \[
 \Bi W' \arr \Bi W_G(H) \arr \Bi Q\, .
 \]
If $G$ or $H$ is \'etale over $k$ then $Q$ is a finite group scheme.
\end{lem}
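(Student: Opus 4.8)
The plan is to deduce almost everything from Lemma~\ref{automorphisms of a generic tower} applied to the trivial tower $\xi_{0}\colon G\times H\arr G\arr\Spec k$. For this $\xi_{0}$ the $H$-torsor $P\arr G$ is trivial, so $\Autsh^{H}_{G}(P)=H\times_{k}G$ and $W_{G/k}(\Autsh^{H}_{G}(P))=W_{G}(H)$; hence Lemma~\ref{automorphisms of a generic tower} together with Proposition~\ref{geometry of BGH} already gives that $Q$ is affine and of finite type and provides the exact sequences $1\arr W_{G}(H)\arr Q\arrdi{\alpha}G$ and $1\arr W'\arr W_{G}(H)\arrdi{\beta}H$. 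So the new content of the first two displayed formulas is just that $\alpha$ and $\beta$ admit sections.

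First I would exhibit the section of $\alpha$: for $g\in G(T)$ take the automorphism of $\xi_{0}$ over $T$ whose base component is left multiplication by $g$ on $G_{T}$ and whose total component is $(\text{mult}_{g})\times\id_{H}$ on $G_{T}\times H$ (the left–translation action of $G$ on the trivial tower); this defines $s\colon G\arr Q$ with $\alpha s=\id$. For $\beta$, a point $h\in H(T)$ gives the $H$-equivariant automorphism $(g',x)\mapsto(g',hx)$ of $G_{T}\times H$ over $G_{T}$, i.e.\ the constant morphism $h\in W_{G}(H)(T)$, and this defines a section $t\colon H\arr W_{G}(H)$ of $\beta$. Since a split extension is a semidirect product we obtain $Q=W_{G}(H)\ltimes G$ and $W_{G}(H)=W'\ltimes H$; a direct computation of conjugation by $s(G)$ on $W_{G}(H)\cong\{\text{morphisms }G\to H\}$ identifies the resulting $G$-action as $g\cdot\phi=\phi(g\,\cdot\,-)$, i.e.\ the action through automorphisms of the base $G$, as stated.

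Next I would identify the tower attached to the fully faithful map $\Bi Q\arr\Bi(G,H)$. Because $\Bi Q$ is a gerbe, this map is (by the standard construction behind Lemma~\ref{automorphisms of a generic tower}) the one descending the $Q$-equivariant object $\xi_{0}$ over $\Spec k$; consequently the pulled-back universal tower of Remark~\ref{universal tower of BGH} is obtained by taking stack quotients of the three spaces of $\xi_{0}$ by the tautological $Q$-action, and it is pointed by the trivial torsors. The $Q$-action on the middle space $G$ is through $\alpha$ followed by left translation, hence transitive with stabilizer $\Ker\alpha=W_{G}(H)$, so $[G/Q]\simeq\Bi W_{G}(H)$; the induced action of $W_{G}(H)$ on the fibre $H$ of $G\times H$ over $1\in G$ is through $\beta$ followed by left translation, hence transitive with stabilizer $\Ker\beta=W'$, so $[(G\times H)/Q]\simeq\Bi W'$. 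Under these identifications the tower becomes $\Bi W'\arr\Bi W_{G}(H)\arr\Bi Q$; the map $\Bi W_{G}(H)\arr\Bi Q$ is the base change of $\Spec k\arr\Bi G$ along $\Bi Q\arr\Bi G$ (induced by $\alpha$), hence a $G$-torsor, and likewise $\Bi W'\arr\Bi W_{G}(H)$ is an $H$-torsor; both are Nori-reduced since the induced maps on Nori fundamental group schemes, $\alpha\colon Q\arr G$ and $\beta\colon W_{G}(H)\arr H$, are surjective. I expect this step — verifying that the tower really is $\Bi W'\arr\Bi W_{G}(H)\arr\Bi Q$ with these torsor structures — to be the main obstacle; the semidirect-product verifications above are routine bookkeeping with the functor of points.

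Finally, the étale case. By Proposition~\ref{geometry of BGH}, $W_{G}(H)$ is affine and of finite type over $k$, so it is finite as soon as it is proper, and properness may be checked after base change to $\bar k$. If $H$ is étale then $H\times_{k}G\arr G$ is finite étale, and the Weil restriction along the finite flat morphism $G\arr\Spec k$ of this proper morphism is again proper by the standard stability properties of Weil restriction along finite flat morphisms; if $G$ is étale then $G_{\bar k}$ is a constant group scheme of some order $n$, so $W_{G}(H)_{\bar k}\simeq H_{\bar k}^{\,n}$ is finite. In either case $W_{G}(H)$ is finite, and therefore so is $Q=W_{G}(H)\ltimes G$.
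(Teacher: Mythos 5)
Most of your argument matches the paper's own proof. The two splittings are obtained there by exactly the sections you write down ($g\mapsto (t_g\times \id_H,t_g)$ for $\alpha$, and the constant maps $H(T)\arr H(G\times T)=W_G(H)(T)$ for $\beta$), and your identification of the tower attached to $\Bi Q\arr \Bi(G,H)$ --- descending the $Q$-equivariant trivial tower along $\Spec k\arr \Bi Q$ and computing stabilizers --- is a correct, slightly more hands-on substitute for the paper's one-line appeal to Lemma \ref{key product gerbes}, which identifies the $G$-torsor induced by the surjection $\alpha\colon Q\arr G$ with $\Bi \Ker(\alpha)=\Bi W_G(H)$, and likewise for $\beta$. (A minor imprecision: Nori-reducedness should be phrased via surjectivity of the profinite completion $\widehat Q\arr G$, but this does follow from surjectivity of $Q\arr G$ since $G$ is finite.)

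The genuine gap is the case ``$H$ \'etale'' of the final statement. The principle you invoke --- that Weil restriction along a finite flat morphism preserves properness --- is false, and the paper is in part built around a counterexample to it: when $G$ and $H$ are both infinitesimal, $H\times_k G\arr G$ is still finite, hence proper, yet $W_G(H)$ contains a copy of $\G_a$ and is not proper (Example \ref{counterexample 1}; see also the explicit computation $W_G(\mu_2)=\G_a\times\mu_2$ in Example \ref{counterexample no H1 exact sequence}). If your stability property held, the lemma would be true with no \'etaleness hypothesis at all, contradicting Example \ref{counterexample 1}. The failure is visible in the valuative criterion: for a DVR $R$ the algebra $R\otimes_k k[G]$ need not be a product of DVRs, so properness of $H\times_kG$ over $G$ gives no extension of $R$-points of the Weil restriction. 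The \'etaleness of $H$ must therefore enter in an essential way. Two correct fixes: (a) the paper's --- since $H$ is formally \'etale and $G_\red\arr G$ is a nilpotent closed immersion, the restriction map $W_G(H)\arr W_{G_\red}(H)$ is an isomorphism, and over $\bar k$ one has $G_\red=G_\et$ constant, whence $W_{G_\red}(H)=H^{\#G_\et}$ is finite; or (b) note that Weil restriction along a finite flat morphism does preserve the formally \'etale property and that $W_G(H)$ has finite fibres, so $W_G(H)$ is a separated, quasi-finite, \'etale scheme of finite type over a field, hence finite. Your ``$G$ \'etale'' case is fine as written.
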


\begin{proof}
Consider the maps $\alpha$ and $\beta$ defined in Lemma
\ref{automorphisms of a generic tower}. We have $P=G\times H$, so that
$$\Autsh^H_G(P)=H\times G \arr G\, .$$ Moreover it is easy to see that
both $\alpha$ and $\beta$ are surjective. The map $$G\arr Q\, ,
\ \  g\longmapsto (t_g \times \id_H,t_g)\, ,$$ where $t_g$ is the multiplication by
$g$, produces the first decomposition. The map $H(T)\arr H(G\times T)=W_G(H)(T)$
produces the second decomposition.

The claim about the tower of $\Bi Q$ is an easy consequence of Lemma \ref{key product 
gerbes}.
 
Now assume that $G$ or $H$ are \'etale. We can also assume that $k$ is 
algebraically closed, so that $G$ or $H$ are constant. If $G$ is constant then 
$W_G(H)=H^{\#G}$ is a finite scheme and therefore $Q$ is finite. Thus assume $H$ 
constant. The map $G_\red\arr G$, where $(-)_\red$ denotes the reduction, is a 
nilpotent closed immersion and, since $H$ is \'etale, it follows that the map 
$W_G(H)\arr W_{G_\red}(H)$ is an isomorphism. But $G_\red$ is equal to the \'etale quotient $G_\et$ of $G$ (because $k$ is perfect) and again we
conclude that $W_{G_\red}(H)$ is finite.
\end{proof}

\begin{prop}\label{etale case}
If $G$ or $H$ is \'etale then the map $\Bi Q\arr \Bi(H,G)$ is an equivalence, 
where $Q$ is the sheaf of automorphisms of the trivial tower $G\times H\arr G\arr 
\Spec k$ in $\Bi(G,H)$. In particular, $\Bi(G,H)$ is a finite neutral gerbe over 
$k$.
\end{prop}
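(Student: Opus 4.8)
The plan is as follows. By Lemma \ref{automorphisms of a generic tower}, applied to the trivial tower $\xi_0 = (G\times H\arr G\arr \Spec k)$ whose automorphism sheaf is $Q$, the functor $\Bi Q\arr \Bi(G,H)$ is fully faithful and its essential image is the full substack of $(G,H)$-towers that are fppf-locally isomorphic to $\xi_0$. So it is an equivalence precisely when every $(G,H)$-tower is fppf-locally trivial, and once this is known $\Bi(G,H)\simeq\Bi Q$ is a finite gerbe because $Q$ is a finite group scheme by Lemma \ref{automorphisms of the trivial tower}, and it is neutral because it carries the rational point $\xi_0$. Thus the whole proof reduces to showing: if $\stX=U$ is an affine scheme and $\stZ\arrdi h \stY\arrdi g U$ is a $(G,H)$-tower, then there is an fppf cover $U'\arr U$ over which the tower becomes the trivial one.

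To prove this I would first trivialize the top torsor: $g$ is a torsor under the finite flat group scheme $G$, hence fppf-locally trivial, so after replacing $U$ by an fppf cover we may assume $\stY=G_U$ and $h\colon\stZ\arr G_U$ is an $H$-torsor. It then suffices to trivialize this $H$-torsor after a further fppf base change on $U$, since a section of $\stZ\arr G_U$ together with the chosen trivialization $\stY\simeq G_U$ identifies the whole tower with $(G\times H)_{U'}\arr G_{U'}\arr U'$. Now I would use the hypothesis and split into two cases. If $G$ is étale, then after a finite separable base change on $U$ we may assume $G=\underline\Gamma$ is constant, so $G_U=\coprod_{\gamma\in\Gamma}U$ and an $H$-torsor over $G_U$ is the same as a finite family $(P_\gamma)_{\gamma\in\Gamma}$ of $H$-torsors over $U$; each $P_\gamma$ is fppf-locally trivial, and over the fibre product of the finitely many covers trivializing them all the $P_\gamma$, hence the whole $H$-torsor, become trivial. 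If instead $H$ is étale, I would take $U'\coloneqq W_{G_U/U}(\stZ)$, the Weil restriction of the $H$-torsor $\stZ\arr G_U$ along the finite locally free morphism $G_U\arr U$; by construction $U'$ represents the functor $T\mapsto\{\text{sections of }\stZ\text{ over }G_T\}$, i.e.\ the locus over which the $H$-torsor trivializes, and $\stZ$ acquires a section after base change to $U'$.

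The one point that needs genuine care — the main obstacle — is that $U'\arr U$ is an fppf cover in the étale-$H$ case. Representability by an affine scheme, flatness, and local finite presentation follow from the facts that $\stZ\arr G_U$ is affine and flat (indeed étale, as $H$ is étale) and $G_U\arr U$ is finite locally free, so the Weil restriction inherits these properties. Surjectivity is checked on geometric fibres: over a geometric point $\bar u$ of $U$ the fibre is the set of sections of $\stZ_{\bar u}\arr G_{\bar u}$, where $G_{\bar u}$ is a finite scheme over an algebraically closed field and $\stZ_{\bar u}$ is a torsor under the étale group $H$; since the étale topos is insensitive to nilpotents, $\Hl^1_{\et}(G_{\bar u},H)=\Hl^1_{\et}((G_{\bar u})_\red,H)$, and $(G_{\bar u})_\red$ is a finite disjoint union of copies of $\Spec\bar k$, so this group vanishes and $\stZ_{\bar u}$ has a section. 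Hence $U'\arr U$ is faithfully flat and of finite presentation, the tower is fppf-locally trivial, and we conclude. The remaining reductions are routine and parallel those already carried out in the proof of Lemma \ref{automorphisms of the trivial tower}.
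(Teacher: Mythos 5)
Your proof is correct and follows the same overall skeleton as the paper's: reduce via Lemma \ref{automorphisms of a generic tower} to showing that every tower over an affine scheme is locally trivial, first trivialize the $G$-torsor fppf-locally, then split into the two cases according to which of $G$, $H$ is \'etale, handling the $G$-\'etale case by constancy and a disjoint-union decomposition exactly as the paper does. Where you genuinely diverge is the $H$-\'etale case: the paper passes to $\bar k$ and uses that the \'etale morphism $P\arr U\times G$ has a section if and only if its restriction to the nilpotent closed subscheme $U\times G_\red$ does, thereby reducing to the already-settled case $G=G_\red=G_\et$; you instead build the trivializing cover directly as the Weil restriction $U'=W_{G_U/U}(\stZ)$ (the scheme of sections) and verify that it is an fppf cover of $U$, using the same nilpotent-insensitivity of the \'etale site only fibrewise, to check surjectivity. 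Your route buys slightly more — an honest fppf (indeed \'etale) cover of $U$ rather than the fpqc cover $U_{\bar k}\arr U$ implicit in the paper's reduction to an algebraically closed field — at the cost of having to know that Weil restriction along a finite locally free morphism is representable and well behaved. One phrasing to tighten: Weil restriction does \emph{not} in general "inherit" flatness of affine finitely presented schemes; what is true, and what you actually need, is that it preserves smooth (hence \'etale) morphisms, via the infinitesimal lifting criterion applied to $Z(T\times_S S')\arr Z(T_0\times_S S')$, which is exactly the situation here since $\stZ\arr G_U$ is an $H$-torsor with $H$ \'etale.
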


\begin{proof}
In view of Lemma \ref{automorphisms of the trivial tower} it suffices to show 
that any tower is fpqc locally trivial.

Let $P\arr P'\arr U$ be a tower over an 
affine scheme. Using base changing along $P\arr U$ we
may assume this map has a section, 
so that, in particular, $P'=U\times G$. We can also assume that $k$ is 
algebraically closed, so that $G$ or $H$ is constant. If $G$ is constant, $P\arr 
U\times G$ is given by a number $\#G$ of $H$-torsors over $U$ and, trivializing those torsors, 
one gets a trivialization of $P\arr U\times G$. Now consider $H$ to be \'etale. Since 
$P\arr U\times G$ is \'etale and $U\times G_\red \arr U\times G$ is a nilpotent 
closed immersion we conclude that $P\arr U\times G$ has a section if and only if its 
restriction to $U\times G_\red$ is trivial. Thus we reduce to the known case 
where $G=G_\red=G_\et$ is \'etale.
\end{proof}

We now move to the problem of finding a Galois closure for a given tower of torsors.

\begin{defn}\label{Galois closure}
Let $\stX$ be a fibered category, and let $\stZ\arr\stY\arr\stX$ be a $(G,H)$-tower of
torsors. A \textit{Galois envelope} for the $(G,H)$-tower consists of the following data:
\begin{itemize}
\item a finite group scheme $\shG$ with homomorphisms of group schemes $\alpha\colon \shG\arr
G$ and $\Ker(\alpha)\arr H$,

\item a $\shG$-torsor $\stP\arr \stX$ together with a factorization $\stP\arr \stZ$
such that $\stP\arr \stY$ is $\shG$-equivariant and $\stP\arr \stZ$ is
$\Ker(\alpha)$-equivariant.
\end{itemize}
We say that the Galois envelope is Nori-reduced if the $\shG$-torsor $\stP\arr \stX$ is Nori-reduced. If $\sZ\arr \sX$ is an essentially finite cover, and if the Galois envelope $\sP\arr \sX$ coincides with the Galois closure of the essentially finite cover, then we call $\sP\arr\sX$ the Galois closure for the $(G,H)$-tower.
\end{defn}

Now we get a beautiful Galois envelope for the case when either $G$ or $H$ is \'etale:

\begin{thm}\label{Galois closure when G or H is etale}
 If $\stX$ is a category fibered over $k$ and $G$ or $H$ is \'etale then
 \begin{enumerate}
     \item every $(G,H)$-tower of torsors $\stZ\arrdi h \stY\arrdi g \stX$ is an essentially
           finite cover and,
     \item it admits a canonical Galois envelope $\eta\colon\stP\arr \stX$, $\lambda\colon\sP\arr\sZ$,  where $\eta$ is a torsor under the finite 
          group scheme $Q$ representing the sheaf of automorphisms of the trivial 
          tower $G\times H\arr G\arr \Spec k$; 
     \item the map $\lambda$ is a torsor under a finite subgroup scheme of $Q$;
     \item if $\sX$ is pseudo-proper 
          and inflexible, then $\eta\colon\sP\arr \sX$ is the Galois closure for the $(G,H)$-tower if and only if
          the corresponding map $\sX\arr\Bi(G,H)=\Bi Q$ is Nori-reduced.
 \end{enumerate}
\end{thm}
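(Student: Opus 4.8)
The plan is to reduce the statement to the explicit description of $\Bi(G,H)$ obtained in Proposition \ref{etale case} and Lemma \ref{automorphisms of the trivial tower}. Since $G$ or $H$ is \'etale, evaluation at the trivial tower identifies $\Bi(G,H)\simeq \Bi Q$ with $Q=W_G(H)\ltimes G$ finite, and under this identification the universal tower over $\Bi Q$ is the tower of Nori-reduced torsors $\Bi W'\arr \Bi W_G(H)\arr \Bi Q$, where $\alpha\colon Q\arr G$ is the projection, $\Ker\alpha=W_G(H)$, the map $\beta\colon W_G(H)\arr H$ is evaluation at $1\in G$, and $W'=\Ker\beta$. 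A $(G,H)$-tower $\stZ\arrdi h\stY\arrdi g\stX$ is thus the same datum as a map $\varphi\colon\stX\arr\Bi(G,H)\simeq\Bi Q$, and pulling back the universal tower gives $\stY=\stX\times_{\Bi Q}\Bi W_G(H)$ and $\stZ=\stX\times_{\Bi Q}\Bi W'$, the latter being the relative spectrum over $\stX$ of the pullback along $\varphi$ of the $\odi{\Bi Q}$-algebra $\shA\coloneqq(\Bi W'\arr\Bi Q)_*\odi{\Bi W'}$. In particular $(gh)_*\odi\stZ\simeq\varphi^*\shA$ is a vector bundle pulled back from the finite stack $\Bi(G,H)$, hence essentially finite by Remark \ref{pullback from finite stack}; this gives (1), without using pseudo-properness or inflexibility.

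For (2) and (3) I would take $\stP\coloneqq\stX\times_{\Bi Q}\Spec k$, the $Q$-torsor classified by $\varphi$. Pulling back along $\varphi$ the chain of trivial torsors $\Spec k\arr\Bi W'\arr\Bi W_G(H)$ yields $\stX$-morphisms $\stP\arr\stZ\arr\stY$ in which $\stP\arr\stY$ is $\alpha$-equivariant and $\stP\arr\stZ$ is $\beta$-equivariant, i.e. $\Ker(\alpha)$-equivariant: this is precisely the data of a Galois envelope $\eta\colon\stP\arr\stX$, $\lambda\colon\stP\arr\stZ$ with group $Q$, proving (2). Moreover $\lambda$ is the base change of the $W'$-torsor $\Spec k\arr\Bi W'$, hence a torsor under the finite subgroup scheme $W'\subseteq Q$, proving (3).

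For (4), assume $\stX$ pseudo-proper and inflexible. If $\eta\colon\stP\arr\stX$ is the Galois closure of the essentially finite cover $\stZ\arr\stX$, then by Theorem \ref{closure of an essentially finite cover} it is, as a $Q$-torsor, a Nori-reduced torsor, so $\varphi\colon\stX\arr\Bi Q$ is Nori-reduced by definition. Conversely, assume $\varphi$ is Nori-reduced; then $\Rep Q=\Vect(\Bi Q)\arr\EF(\Vect(\stX))$ is a Tannakian subcategory, so the monodromy gerbe of $(gh)_*\odi\stZ\simeq\varphi^*\shA$ is the monodromy gerbe, computed inside $\Rep Q$, of the representation $\shA=k[Q/W']$. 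The key claim is that $k[Q/W']$ is a \emph{faithful} $Q$-representation, equivalently that $W'$ contains no nontrivial normal subgroup of $Q$: after base change to $\overline k$ we may assume $G$ or $H$ is constant (Proposition \ref{etale case}), and then any normal $N\subseteq W'$ is stable under conjugation by $G$, hence lies in $\bigcap_{g\in G(\overline k)}g\cdot W'$, which is the kernel of the evaluation map $W_G(H)_{\overline k}\arr\prod_{x\in G(\overline k)}H_{\overline k}$; since $G$ or $H$ is \'etale this map is an isomorphism (see the proof of Lemma \ref{automorphisms of the trivial tower}), so $N$ is trivial. Hence $Q\inj\GL(k[Q/W'])$ is a closed immersion, $k[Q/W']$ tensor-generates $\Rep Q$ (\cite[Appendix B]{TZ}), so the monodromy gerbe of $(gh)_*\odi\stZ$ is all of $\Bi Q$; therefore $G_f=Q$ and the Galois closure $\stP_f=\stX\times_{\Bi Q}\Spec k=\stP$, with matching factorization through $\stZ$ since both come from base changing $\Spec k\arr\Bi W'$. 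Thus $\eta\colon\stP\arr\stX$ is the Galois closure of the tower.

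The main obstacle is this faithfulness assertion in (4): the identification $\Bi(G,H)\simeq\Bi Q$ and the pullback of the universal tower are routine bookkeeping, whereas showing that $k[Q/W']$ is a faithful $Q$-representation — so that the monodromy gerbe of $(gh)_*\odi\stZ$ exhausts $\Bi Q$ exactly when $\varphi$ is Nori-reduced — genuinely uses the semidirect-product structure $Q=W_G(H)\ltimes G$ and the \'etale hypothesis. A secondary point requiring care is that the Galois envelope produced in (2), viewed as a pointed torsor together with its map to $\stZ$, literally coincides with the $\stP_f$ of Theorem \ref{closure of an essentially finite cover}, and not merely up to abstract isomorphism.
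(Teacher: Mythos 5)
Your proof is correct, and parts (1)--(3) follow essentially the same route as the paper: identify $\Bi(G,H)\simeq\Bi Q$ via Proposition \ref{etale case}, pull back the universal tower $\Bi W'\arr\Bi W_G(H)\arr\Bi Q$ and the chain of trivial torsors $\Spec k\arr\Bi W'\arr\Bi Q$ along $\varphi\colon\stX\arr\Bi Q$, and invoke Remark \ref{pullback from finite stack} for essential finiteness. Where you genuinely diverge is the key step of (4), namely showing that $(a\circ b)_*\odi{\Bi W'}\simeq k[Q/W']$ tensor-generates $\Rep Q$ (so that the monodromy gerbe of $f_*\odi\stZ$ is all of $\Bi Q$ when $\varphi$ is Nori-reduced). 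The paper argues by contradiction with the universal property: if the monodromy group $Q'$ were a proper quotient of $Q$, one could descend the whole tower structure to $\Bi Q'$ and the full faithfulness of $\Bi Q\simeq\Bi(G,H)$ would force $Q=Q'$. You instead prove directly that $k[Q/W']$ is a faithful $Q$-representation by computing that the normal core of $W'$ in $Q=W_G(H)\ltimes G$ is trivial: conjugation by $G$ moves $W'=\Ker(\mathrm{ev}_1)$ to the kernels of evaluation at the other points of $G(\overline k)$, and the \'etale hypothesis makes the total evaluation map $W_G(H)_{\overline k}\arr\prod_{x\in G(\overline k)}H_{\overline k}$ an isomorphism. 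Both arguments are valid and of comparable length; yours is more concrete and makes visible exactly where the \'etale hypothesis enters the generation statement, while the paper's avoids the representation-theoretic faithfulness criterion and reuses the already-established equivalence $\Bi Q\simeq\Bi(G,H)$. Your closing caveats are well placed: the identification of your $\stP$ with the $\stP_f$ of Theorem \ref{closure of an essentially finite cover} (including the factorization through $\stZ$) tacitly uses the rational point implicit in Definition \ref{Galois closure}, a point the paper's own proof also glosses over.
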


\begin{proof} By Lemma \ref{automorphisms of the trivial tower} and Proposition
\ref{etale case} we have $\Bi(G,H)=\Bi Q$, with universal tower $\Bi W'\arrdi b
\Bi W_G(H) \arrdi a \Bi Q$. The $Q$-torsor $\Spec k\arr \Bi Q$ with splitting
$\Spec k\arr \Bi W'$ gives a Galois closure of the universal tower. Now for any $(G,H)$-tower $\sX\arr \Bi(G,H)=\Bi Q$, the Galois envelope $\sP\arr\sX$ is just the pullback of $\Spec k\arr \Bi Q$ along $\sX\arr \Bi(G,H)=\Bi Q$.
Since $(g\circ h)_*\sO_{\sZ}$ is
the pullback of $(a\circ b)_*\sO_{\Bi W'}$ along $\sX\arr\Bi(G,H)=\Bi Q$, it is essentially finite. This finishes (1), (2), (3).

Let $\langle (a\circ b)_*\sO_{\Bi W'}\rangle$ be the full Tannakian
subcategory of $\Bi Q$ generated by $(a\circ b)_*\sO_{\Bi W'}$ with
Galois group $Q'$. Then the surjection $Q\twoheadrightarrow G$ factors
as $Q\twoheadrightarrow Q'\twoheadrightarrow G$. Let $W$ be the inverse
image of $Q'$ under $W_G(H)\hookrightarrow Q$. Then we have a Cartesian
diagram $$\xymatrix{\Bi W'\ar[d]^-b&\\ \Bi W_G(H)\ar[d]^a\ar[r]^-c& \Bi W
\ar[d]\\ \Bi Q\ar[r]&\Bi Q'}$$
By Lemma \ref{vector bundles of an extension} we see that $b_*\sO_{\Bi W'}$ is
a pullback of a vector bundle on $\Bi W$. By the same argument of Lemma 
\ref{trivial section iff gerbe} we can complete the arrows $b,c$ to a Cartesian diagram where the 
northeastern Vertex is a gerbe. This implies that $\Ker(W_G(H))\arrdi c W$ is contained in $W'$, so $\Bi W_G(H)\arr \Bi H$ factors through
$c$. In this way we obtain a tower for the stack $\Bi Q'$. By the universal property of the tower on $\Bi Q$ we conclude $Q=Q'$. Thus
the regular representation of $Q$ (i.e. the pushforward of $\sO_{\Spec k}$ along $\Spec k\arr\Bi Q$) is a 
subquotient of $f((a\circ b)_*\sO_{\Bi W'},(a\circ b)_*\sO_{\Bi W'})$ for some $f(X,Y)\in\N[X,Y]$. Conversely, $(a\circ b)_*\sO_{\Bi W'}$ is obviously a subobject of the
regular representation of $Q$.

If $\sX$ pseudo-proper and inflexible, then the above shows that the 
Tannakian subcategory of $\Ess(\sX)$ generated by $\eta_*\sO_\stP$ is the same as the
essential image of the pullback functor $\Vect(\Bi Q)\arr \Ess(\sX)$
(which is the Tannakian subcategory generated by $(g\circ h)_*\sO_\sZ$).
Thus $\sP\arr\sX$ is identified with the Galois closure $\sP_f$
of $\sZ\xrightarrow{f\coloneqq g\circ h}\sX$ constructed in Theorem \ref{closure of an essentially finite cover} if and only 
if $\sX\arr\Bi Q$ is Nori-reduced.
\end{proof}

The following lemma shows that a pointed tower $\stZ\arr\stY\arr\stX$ has a Galois closure.

\begin{thm}\label{Galois closure for essentially finite towers}
Let $\stX$ be a pseudo-proper and inflexible algebraic stack of finite type over $k$ and
$\stZ\xrightarrow{h} \stY\xrightarrow{g} \stX$ a pointed tower of torsors. If $\car k > 0$, assume that either $g$ is
\'etale or $\dim_k \Hl^1(\stX,E)<\infty$ for every vector bundle $E$ on $\stX$.
Then the following hold:
\begin{enumerate}
\item $f\colon \stZ\arr \stX$ is essentially finite,

\item the map $\omega\colon \stX\arr\Bi(G,H)$ corresponding to the tower factors through the monodromy gerbe
$\stX\arrdi\phi \Gamma\,=\,\Bi G_f$ of $f_*\odi\stZ$ in $\EF(\Vect(\stX))$ and

\item the $G_f$-torsor $\stP_f\arr \stX$ and the factorization $\stP_f\arr \stZ$ introduced
in Theorem \ref{closure of an essentially finite cover} define a  Galois closure for the $(G,H)$-tower
$\stZ\arr\stY\arr\stX$.
\end{enumerate}
Moreover, the group scheme $G_f$ is a finite subgroup of the affine and finite type $k$-group scheme
$\Autsh_{\Bi(G,H)}(\omega(x))$, where $x$ is the given rational point of $\stX$.
\end{thm}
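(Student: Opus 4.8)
The plan is to deduce everything from Lemma \ref{towers are ess finite}, Theorem \ref{closure of an essentially finite cover}, Lemma \ref{vector bundles of an extension} and the Tannakian dictionary, the real point being that not only the composite cover $f=g\circ h$ but the whole tower is "controlled" by the monodromy gerbe of $f_*\odi\stZ$. Set $f:=g\circ h$; by Lemma \ref{towers are ess finite} it is essentially finite, which is (1). Write $\Gamma=\Bi G_f$ for the monodromy gerbe of $f_*\odi\stZ$ in $\EF(\Vect(\stX))$, $\phi\colon\stX\arr\Gamma$ for the (Nori-reduced) structure map (so $\phi_*\odi\stX\simeq\odi\Gamma$), $\omega\colon\stX\arr\Bi(G,H)$ for the classifying map of the tower, and $\xi=\omega(x)$. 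Applying Theorem \ref{closure of an essentially finite cover} to the pointed essentially finite cover $f\colon(\stZ,z)\arr(\stX,x)$ furnishes the Nori-reduced $G_f$-torsor $\pi\colon\stP_f\arr\stX$, the factorization $\lambda\colon\stP_f\arr\stZ$ and the cover $\Delta\arr\Gamma$ of Lemma \ref{trivial section iff gerbe} with $\stZ=\stX\times_\Gamma\Delta$.

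The heart of the argument is (2): that $\omega$ factors through $\phi$. First, since $h$ is affine the inclusion $\odi\stY\subseteq h_*\odi\stZ$ has locally free cokernel (as $h_*\odi\stZ$ is the pullback of the regular representation of $H$), hence $g_*\odi\stY$ is a subobject of $f_*\odi\stZ$ in $\EF(\Vect(\stX))$ and therefore lies in the Tannakian subcategory $\Vect(\Gamma)$ generated by $f_*\odi\stZ$; by Theorem \ref{monodromy essentially finite} and the universal property of the monodromy gerbe the classifying map $\stX\arr\Bi G$ of the $G$-torsor $g$ factors as $\phi$ followed by some $\alpha_\Gamma\colon\Gamma\arr\Bi G$. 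Put $\stA:=\Gamma\times_{\Bi G}\Spec k$, a $G$-torsor over $\Gamma$ and a surjective cover; then $\stY=\stX\times_\Gamma\stA$ and the projection $v\colon\stY\arr\stA$ is the base change of $\phi$. Applying Lemma \ref{vector bundles of an extension} to the $2$-Cartesian square $(\stY\arrdi{v}\stA,\ \stX\arrdi{\phi}\Gamma)$ shows that $v^*\colon\Vect(\stA)\arr\Vect(\stY)$ is fully faithful with essential image $\{W\in\Vect(\stY)\mid g_*W\in\Vect(\Gamma)\}$. This condition is stable under subquotients, tensor products and duals ($g_*$ is exact and preserves vector bundles, and $\Vect(\Gamma)\subseteq\EF(\Vect(\stX))$ is a Tannakian subcategory), and $g_*(h_*\odi\stZ)=f_*\odi\stZ\in\Vect(\Gamma)$ by the very definition of $\Gamma$; since $h_*\odi\stZ$ generates the image of $\Rep H\arr\Vect(\stY)$, Remark \ref{Tannakian reconstruction} factors the classifying map $\stY\arr\Bi H$ of $h$ through $v$, i.e. $h$ is pulled back from an $H$-torsor $\stB\arr\stA$. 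Then $(\stB\arr\stA\arr\Gamma)$ is a $(G,H)$-tower over $\Gamma$, giving $\psi\colon\Gamma\arr\Bi(G,H)$ with $\psi\circ\phi=\omega$; by uniqueness in Lemma \ref{trivial section iff gerbe} one gets $\Delta=\stB$, and tracking the rational points shows $\psi$ sends the point $\mathrm{pt}\colon\Spec k\arr\Gamma$ (the one through which $x$ factors, used to build $\stP_f$) to $\xi$.

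For (3) the Galois-envelope data are read off $\psi$: composing $\psi$ with $\Bi(G,H)\arr\Bi G$ recovers $\alpha_\Gamma$, hence a homomorphism $\alpha\colon G_f\arr G$; restricting the tower $\psi$ over $\mathrm{pt}$, whose automorphism group is $G_f$, and using the pointings $y\in\stY(k)$, $z\in\stZ(k)$ produces $\Ker(\alpha)\arr H$. Since $\stP_f=\stX\times_\Gamma\Spec k$, $\stY=\stX\times_\Gamma\stA$ and $\stZ=\stX\times_\Gamma\stB$ are all pulled back along $\phi$ from $\Gamma$, unwinding the Cartesian squares shows $\stP_f\arr\stY$ is $\alpha$-equivariant and $\lambda\colon\stP_f\arr\stZ$ is $\Ker(\alpha)$-equivariant, which is exactly the data of a Galois envelope; as $\stP_f\arr\stX$ is by construction the Galois closure of the essentially finite cover $f$, this package is the Galois closure of the tower. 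The step I expect to be the main obstacle is precisely this one together with the descent of the $H$-torsor in (2): establishing that the tower structure (not merely the cover) descends to $\Gamma$, and then matching the rational point $a_0\in\stA(k)$ coming from $y$ with the canonical point of $\stA=\Bi G_f\times_{\Bi G}\Spec k$ so that the two equivariances hold on the nose.

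For the final ("Moreover") clause, $\psi$ induces $\psi_*\colon G_f=\Autsh_\Gamma(\mathrm{pt})\arr\Autsh_{\Bi(G,H)}(\xi)=Q_\xi$. If $N:=\Ker\psi_*$ then $\psi$ factors through $\Bi(G_f/N)$, so $\omega$ factors as $\stX\arrdi{\phi}\Bi G_f\arrdi{q}\Bi(G_f/N)\arr\Bi(G,H)$; hence $f_*\odi\stZ$, which equals $\phi^*W_0$ for a generator $W_0$ of $\Rep G_f$, also equals $(q\circ\phi)^*M'$ for some $M'\in\Rep(G_f/N)$, and full faithfulness of $\phi^*$ forces $W_0=q^*M'$, i.e. $N$ acts trivially on $W_0$; since $W_0$ generates $\Rep G_f$ this gives $N=1$. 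Thus $\psi_*$ is a closed immersion, and as $Q_\xi$ is affine of finite type over $k$ by Lemma \ref{automorphisms of a generic tower} while $G_f$ is finite, $G_f$ is a finite subgroup of $\Autsh_{\Bi(G,H)}(\omega(x))$.
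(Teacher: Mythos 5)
Your architecture coincides with the paper's: part (1) from Lemma \ref{towers are ess finite}; part (2) by descending first the $G$-torsor and then the $H$-torsor to $\Gamma=\Bi G_f$ via Lemma \ref{vector bundles of an extension} and Tannakian reconstruction; part (3) by unwinding the resulting tower over $\Gamma$; and the ``Moreover'' clause by factoring $\Bi G_f\arr \Bi(G,H)$ through $\Bi Q_\xi$ and using that $f_*\odi\stZ$ generates $\Rep G_f$ to kill the kernel. Your two local variations are harmless: you descend the $G$-torsor by exhibiting $g_*\odi\stY$ as a Tannakian subobject of $f_*\odi\stZ$ (the paper instead applies Lemma \ref{morphisms of torsors} to $\stP_f\arr\stY\arr\stX$, but your argument is the one used in the proof of Theorem \ref{closure of an essentially finite cover}(4)), and your treatment of the last clause via $N=\Ker\psi_*$ is the paper's ``image of $q$'' argument in different words.

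There is, however, one step whose justification fails as written: the claim that the condition ``$g_*W\in\Vect(\Gamma)$'' is stable under tensor products and subquotients ``because $g_*$ is exact and $\Vect(\Gamma)\subseteq\EF(\Vect(\stX))$ is a Tannakian subcategory''. First, $g_*$ is not monoidal, so nothing about $g_*(W\otimes W')$ follows from $g_*W,\,g_*W'\in\Vect(\Gamma)$. Second, and more seriously, $\Vect(\Gamma)$ is closed under subobjects taken \emph{inside} $\EF(\Vect(\stX))$, not under arbitrary subbundles with locally free quotient in $\Vect(\stX)$: a subbundle of an essentially finite bundle with locally free quotient need not be essentially finite (e.g.\ $\odi{\P^1}(-1)\subseteq\odi{\P^1}^{\oplus 2}$ in the Euler sequence). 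Hence from $g_*(h_*\odi\stZ)=f_*\odi\stZ\in\Vect(\Gamma)$ you cannot formally conclude $g_*(w^*V)\in\Vect(\Gamma)$ for every $V\in\Rep H$ (writing $w\colon\stY\arr\Bi H$ for the classifying map of $h$), which is exactly what you need to factor $w$ through $\stA$. The gap is repairable with tools already in play: either (a) first descend $\stZ\arr\stY$ to a cover $\Delta\arr\stA$ using full faithfulness of $\Vect(\Bi G_f)\arr\Vect(\stX)$, and then use the \emph{other} characterization in Lemma \ref{vector bundles of an extension} --- a bundle on $\stY$ whose pullback along $h$ is free comes from $\stA$ --- which applies at once to all associated bundles of the $H$-torsor $h$ (this is the paper's route); or (b) note that every $V\in\Rep H$ is a subobject of $k[H]^{n}$ with locally free quotient, check that $g_*(w^*V)$ is essentially finite by Lemma \ref{push of ess finite is still ess finite} (or Lemma \ref{push of ess finite for etale} when $g$ is \'etale --- the hypotheses of the theorem are there precisely for this), and only then invoke closure of $\Vect(\Gamma)$ under subobjects in $\EF(\Vect(\stX))$.
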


\begin{proof}
The cover $f\colon \stZ\arr \stX$ is essentially finite by Lemma \ref{towers are ess finite}.
We want to extend the given tower along $\stX\arrdi\phi \Bi G_f$ as in the diagram below.
Using the notation of Theorem \ref{closure of an essentially finite cover} (but the map $f\colon \sY\arr\sX$ there is replaced by $f\colon \sZ\arr\sX$ in our case), according to
Lemma \ref{morphisms of torsors} there exists a morphism of group schemes $G_f \longrightarrow G$
inducing the morphism $\stP _f \arr \stY$. By Lemma \ref{trivial section iff gerbe} the cover
$\stZ\arr \stX$ extends to a cover $\Delta \arr \Bi G_f$. Let $U\coloneqq \Bi G_f\times_{\Bi G}\Spec k$. We have the following Cartesian diagrams:
\[
  \begin{tikzpicture}[xscale=1.5,yscale=-1.2]
    \node (A0_0) at (0, 0) {$\stP_f$};
    \node (A0_1) at (1, 0) {$\stZ$};
    \node (A0_2) at (2, 0) {$\stY$};
    \node (A0_3) at (3, 0) {$\stX$};
    \node (A1_0) at (0, 1) {$\Spec k$};
    \node (A1_1) at (1, 1) {$\Delta$};
    \node (A1_2) at (2, 1) {$U$};
    \node (A1_3) at (3, 1) {$\Bi G_f$};
    \path (A0_1) edge [->]node [auto] {$\scriptstyle{}$} (A1_1);
    \path (A0_0) edge [->]node [auto] {$\scriptstyle{}$} (A0_1);
    \path (A0_1) edge [->]node [auto] {$\scriptstyle{}$} (A0_2);
    \path (A1_0) edge [->,dashed]node [auto] {$\scriptstyle{a}$} (A1_1);
    \path (A0_3) edge [->]node [auto] {$\scriptstyle{}$} (A1_3);
    \path (A1_1) edge [->,dashed]node [auto] {$\scriptstyle{b}$} (A1_2);
    \path (A0_2) edge [->]node [auto] {$\scriptstyle{}$} (A1_2);
    \path (A0_0) edge [->]node [auto] {$\scriptstyle{}$} (A1_0);
    \path (A0_2) edge [->]node [auto] {$\scriptstyle{}$} (A0_3);
    \path (A1_2) edge [->]node [auto] {$\scriptstyle{}$} (A1_3);
  \end{tikzpicture}
  \]
where the dashed arrows come from the existing maps $\sP_f\arr\sZ$ (for $a$) and $\sZ\arr\sY$ (for $b$) and the full faithfulness of $\Vect(\Bi G_f)\arr \Vect(\stX)$. We must equip $b\colon \Delta\arr U$ with a compatible structure of $H$-torsor. Notice that, by
Lemma \ref{vector bundles of an extension}, a vector bundle on $\stY$ whose pullback to $\stZ$ is
free (and thus comes from a vector bundle on $\Delta$) comes from a vector bundle on $U$.
Moreover Lemma \ref{vector bundles of an extension} also tells us that $\Vect(U)\arr \Vect(\stY)$ is fully faithful. This shows that we get a factorization
\[
\Vect(\Bi H)\arr \Vect(U)\arr \Vect(\stY)
\]
and, by Tannakian duality, a factorization $\stY\arr U\arr \Bi H$. This determines an $H$-torsor $\Delta'\arr U$ extending $\stZ\arr \stY$. Since $\Vect(U)\arr \Vect(\stY)$ is fully faithful one concludes that $\Delta'\simeq \Delta$ over $U$.

Let $K$  be the kernel of $G_f\arr G$. The map $\Spec k \arr U$ factors through a closed immersion $\Bi K\arr U$ and the composition $\Bi K\arr U\arr \Bi H$ preserves trivial torsors,
meaning it is induced by a homomorphism $K\arr H$. It is easy to show that all the data constructed define a Galois envelope of the original tower with Galois group $\sG=G_f$.

For the last claim, set $\xi=\omega(x)$ and $Q=\Autsh_{\Bi(G,H)}(\xi)$. By hypothesis the tower
$\xi$ is pointed and therefore, by Lemma \ref{automorphisms of a generic tower}, the group scheme $Q$ is affine and of finite type and there is a fully faithful map $\Bi Q\arr \Bi(G,H)$ whose essential image is the full substack of $\Bi(G,H)$ of towers which are fpqc locally isomorphic to $\xi$. Since $\omega$ factors as $\stX\arr \Bi G_f\arr \Bi(G,H)$, all objects in the image of $\Bi G_f\arr \Bi(G,H)$ are locally isomorphic to $\xi$. Thus the previous morphism factors through $\Bi G_f \arr \Bi Q$. This map preserves trivial torsors and it is therefore induced by a map $G_f\arrdi q Q$. Let $\shG$ be the image of $q$. The factorization
\[
\stX\arr \Bi G_f \arr \Bi \shG \arr \Bi Q \subseteq \Bi(G,H)
\]
tells us that the tower over $\stX$ extends to a tower over $\Bi \shG$ and therefore $f_*\odi\stZ$
comes from a vector bundle on $\Bi \shG$. But $\Bi G_f$ is exactly the monodromy gerbe of
$f_*\odi\stZ$ in $\EF(\Vect(\stX))$, which implies that $G_f = \shG \subseteq Q$.
\end{proof}

\begin{proof}[Proof of Theorem \ref{main theorem for Galois closure}]
The closure we consider is the one in Theorem \ref{Galois closure for essentially finite towers}.
Statement (1) follows by applying Theorem \ref{main thm Nori gerbe of ess finite cover} and
lemma \ref{morphisms of torsors} to both $\stX$ and $\stY$ as bases. Statement (2) follows from the
last statement  of Theorem \ref{Galois closure for essentially finite towers}.
\end{proof}

\section{The $\SS$-fundamental gerbe of essentially finite covers}

The aim of this section is to prove Theorem \ref{main thm for the S-gerbe}.
We start by introducing the $\SS$-fundamental gerbe, which generalizes the notion of $\SS$-fundamental group (see \cite{BPS}, \cite{L1}, \cite{L2}). 

\begin{defn}\label{Nori semistable}
A vector bundle $V$ on a fibered category $\stX$ is called \emph{Nori semistable} if for all 
smooth projective curves $C$ over an algebraically closed field and all maps $i\colon C\arr 
\stX$ the pullback $i^*V$ is semistable of degree $0$.
 
 We denote by $\Ns(\stX)$ the full subcategory of $\Vect(\stX)$ of Nori semistable vector bundles.
 
 The $\SS$-fundamental gerbe of a fibered category $\stX$ over $k$ is an affine gerbe 
$\Pi$ over $k$ together with a map $u\colon \stX\arr \Pi$ whose pullback $u^*\colon 
\Vect(\Pi)\arr \Vect(\stX)$ is fully faithful with essential image $\Ns(\stX)$. The 
$\SS$-fundamental gerbe is unique and is denoted by $\Pi^\SS_{\stX/k}$ when it exists.
 
If $\stX$ has an $\SS$-fundamental gerbe $\Psi\colon \stX\arr \Pi^\SS_{\stX/k}$ and $x\in 
\stX(k)$, then the $\SS$-fundamental group $\pi^\SS(\stX/k,x)$ of $(\stX,x)$ over $k$ is the 
sheaf of automorphisms of $\psi(x)\in \Pi^\SS_{\stX/k}$.
 
We will usually drop the $/k$ when $k$ is clear from the context.
\end{defn}

\begin{rmk}\label{when X has an S-fundamental gerbe}
A fibered category $\stX$ over $k$ has an $\SS$-fundamental gerbe if and only if 
$\Hl^0(\odi\stX)=k$ and $\Ns(\stX)$ is an abelian subcategory of $\QCoh(\stX)$. The ``only if'' is 
clear. For the converse observe that $\Ns(\stX)$ is a rigid monoidal category. If it is also an 
abelian subcategory of $\QCoh(\stX)$ then $\Ns(\stX)$ is $k$-Tannakian and the map 
$\Ns(\stX)\arr \Vect(\stX)$ sends exact sequences to exact sequences in $\QCoh(\stX)$. By 
Tannakian duality $\Ns(\stX) \simeq \Vect(\Pi)$, where $\Pi$ is an affine gerbe, and by Remark 
\ref{Tannakian reconstruction}, the inclusion $\Ns(\stX)\subseteq \Vect(\stX)$ is realized as 
the pullback of a map $\stX\arr \Pi$, that is $\Pi=\Pi^\SS_\stX$.
\end{rmk}

\begin{rmk}\label{NSX Tannakian implies inflexible}
If $\stX$ is a fibered category with an $\SS$-fundamental gerbe, then its profinite quotient is
a Nori fundamental gerbe. In particular $\stX$ is inflexible.
Indeed since finite vector bundles on $\stX$ are Nori semistable, we have 
 \[
 \EF(\Vect(\stX)) = \EF(\Ns(\stX))
 \]
is also a $k$-Tannakian category by Theorem \ref{monodromy essentially finite} and that it
is an abelian subcategory of $\QCoh(\stX)$. From Remark \ref{Tannakian reconstruction} and
Remark \ref{pullback from finite stack} it follows that the affine gerbe associated to
$\EF(\Ns(\stX))$, which is the profinite quotient of $\Pi^\SS_\stX$, is a Nori fundamental
gerbe for $\stX$.
\end{rmk}

\begin{rmk}\label{functoriality and descent for Nori semistable}
Let $\phi\colon \stX'\arr \stX$ be a map of fibered categories and $F\in \Vect(\stX)$. If $\shF$ 
is Nori semistable then $\phi^*\shF$ is Nori semistable too. The converse holds if $\phi$ is 
representable (by a scheme), proper and surjective. Indeed one can assume that $\stX$ is a proper, smooth, 
integral curve over an algebraically closed field $k$ and must prove that $\shF$ is semistable 
of degree $0$. Considering a closed point in the generic fiber of $\phi$ and taking the 
normalization of its closure one can moreover assume that $\stX'$ is also a proper, smooth, 
integral curve over $k$. In particular $\phi$ is a cover. In this case the result follows 
because the pullback of a subbundle destabilizing $\shF$ actually destabilizes $\phi^*\shF$.
\end{rmk}

\begin{ex}
 If $X$ is a smooth, geometrically connected and geometrically projective scheme over $k$
then $X$ has an $\SS$-fundamental gerbe over $k$ (see \cite{BPS}, \cite{L1}, \cite{L2}). Recently, it is shown
in \cite[Theorem 6.7]{BHD} that if $X$ is a reduced algebraic $k$-scheme which is connected by proper chains (see \cite[Definition 6.1]{BHD}), where $k$ is algebraically closed, then
$X$ has an $\SS$-fundemental gerbe over $k$.
\end{ex}

\begin{prop}
 An affine gerbe $\Gamma$ over $k$ has an $\SS$-fundamental gerbe over $k$.
\end{prop}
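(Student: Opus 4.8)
The plan is to verify the two conditions in Remark~\ref{when X has an S-fundamental gerbe}: that $\Hl^0(\odi\Gamma)=k$ and that $\Ns(\Gamma)$ is an abelian subcategory of $\QCoh(\Gamma)$. The first is immediate: an affine gerbe is pseudo-proper (Definition~\ref{pseudo-proper} and the example following it) and inflexible, so $\Hl^0(\odi\Gamma)=k$ by Remark~\ref{infl then HO=k}; alternatively, base change to $\bar k$, where $\Gamma_{\bar k}\simeq\Bi G$ for an affine $\bar k$-group scheme $G$, observe that $\Hl^0(\odi{\Bi G})=\bar k$ is the space of invariants of the trivial representation, and descend along $\bar k/k$.

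For the second condition I would first recall, from the Tannakian theory of affine gerbes (\cite[Appendix B]{TZ}), that $\Vect(\Gamma)$ is an abelian category and that the inclusion $\Vect(\Gamma)\subseteq\QCoh(\Gamma)$ is exact; concretely, kernels and cokernels of morphisms of vector bundles on $\Gamma$ are again vector bundles and agree with those computed in $\QCoh(\Gamma)$ (check on $\Bi G$ over $\bar k$, where $\QCoh(\Bi G)$ is the category of $G$-modules, and descend along $\bar k/k$). It then suffices to show that $\Ns(\Gamma)$ is stable under sub- and quotient objects, equivalently under kernels and cokernels, inside $\Vect(\Gamma)$. So fix a morphism $\varphi\colon V\arr W$ with $V,W\in\Ns(\Gamma)$, set $I=\im\varphi\in\Vect(\Gamma)$, and record the two short exact sequences of vector bundles
\[
0\arr \ker\varphi\arr V\arr I\arr 0 ,\qquad 0\arr I\arr W\arr \Coker\varphi\arr 0 .
\]
Since every term is a vector bundle, these sequences stay exact after applying $i^{*}$ for an arbitrary (not necessarily flat) morphism $i\colon C\arr\Gamma$, as no higher $\Tor$ intervenes; this is the one place where it is essential that all objects involved be vector bundles rather than arbitrary quasi-coherent sheaves, and it is the step I expect to be the main technical obstacle.

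Now take any smooth projective curve $C$ over an algebraically closed field and any $i\colon C\arr\Gamma$. Since $V,W\in\Ns(\Gamma)$, the bundles $i^{*}V$ and $i^{*}W$ are semistable of degree $0$. From the first sequence $i^{*}I$ is a quotient bundle of $i^{*}V$, so $\deg i^{*}I\geq 0$; from the second it is a subbundle of $i^{*}W$, so $\deg i^{*}I\leq 0$; hence $\deg i^{*}I=0$, and being a subsheaf of the semistable bundle $i^{*}W$ of the same slope, $i^{*}I$ is semistable of degree $0$. Feeding this back, $i^{*}\ker\varphi$ is a degree-$0$ subbundle of $i^{*}V$ and $i^{*}\Coker\varphi$ is a degree-$0$ quotient of $i^{*}W$, so both are semistable of degree $0$. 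As $C$ and $i$ were arbitrary, $\ker\varphi$ and $\Coker\varphi$ lie in $\Ns(\Gamma)$; hence $\Ns(\Gamma)$ is an abelian subcategory of $\QCoh(\Gamma)$, and Remark~\ref{when X has an S-fundamental gerbe} yields the $\SS$-fundamental gerbe of $\Gamma$. (If desired one could then identify $\Pi^{\SS}_{\Gamma/k}$ with the quotient gerbe of $\Gamma$ corresponding to the Tannakian subcategory $\Ns(\Gamma)\subseteq\Vect(\Gamma)$, but this is not required for the statement.)
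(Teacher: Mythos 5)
Your proof is correct and follows essentially the same route as the paper: verify the two conditions of Remark \ref{when X has an S-fundamental gerbe}, use that on an affine gerbe kernels, images and cokernels of maps of vector bundles are again vector bundles (so pullback along any $i\colon C\arr\Gamma$ preserves the relevant exact sequences), and conclude on the curve. The only difference is cosmetic: where the paper simply cites that $\Ns(C)$ is an abelian subcategory of $\QCoh(C)$, you unpack the standard slope argument for semistable degree-$0$ bundles, and you also make the (implicit in the paper) check $\Hl^0(\odi\Gamma)=k$ explicit.
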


\begin{proof}
In view of Remark \ref{when X has an S-fundamental gerbe} we need to show that $\Ns(\Gamma)$ is an
abelian subcategory of $\QCoh(\Gamma)$. So let
 \[
 0\arr K \arr F_1 \arr F_2 \arr Q \arr 0
 \]
 be an exact sequence in $\QCoh(\Gamma)$ with $F_1,F_2\in \Ns(\Gamma)$. We must show that
$K,Q\in \Ns(\Gamma)$. Let $i\colon C\arr \Gamma$ be a map from a smooth projective curve over
some algebraically closed field. Since $\Gamma$ is a gerbe, both $K$ and $Q$ are vector bundles. Thus
$i^*K$ and $i^*Q$ are respectively kernel and cokernel, in $\QCoh(C)$, of a homomorphism between Nori semistable
vector bundles on $C$. Since $\Ns(C)$ is an abelian subcategory of $\QCoh(C)$, it follows that $i^*K$ and $i^*Q$ are in $\Ns(C)$. 
\end{proof}

\begin{lem}\label{push of Nori semistable are Nori semistable}
Let $\stX$ be a pseudo-proper and inflexible category over $k$, and let $f\colon \stY\arr \stX$ be an essentially finite cover. Then
 $$
 \Ns(\stY)=\{ V \in \Vect(\stY) \st f_*V\in \Ns(\stX) \}
 $$
\end{lem}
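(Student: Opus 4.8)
The plan is to deduce both inclusions from the projection–formula isomorphism
$$f^{*}f_{*}V\;\simeq\;V\otimes_{\odi\stY}f^{*}(f_{*}\odi\stY),$$
valid for any cover $f\colon\stY\arr\stX$ and any $V\in\Vect(\stY)$ (a local computation: if $\stX=\Spec R$, $\stY=\Spec S$ with $S$ finite locally free over $R$ and $V$ corresponds to the finitely generated projective $S$-module $M$, then both sides identify with $M\otimes_{R}S$), together with one observation about the adjunction $(f^{*},f_{*})$: the counit $\varepsilon\colon f^{*}f_{*}\odi\stY\arr\odi\stY$ (locally the multiplication $S\otimes_{R}S\arr S$) is split by $f^{*}$ of the unit $\odi\stX\arr f_{*}\odi\stY$, by the triangle identities. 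Hence $\odi\stY$ is a direct summand of $f^{*}f_{*}\odi\stY$, and after tensoring with $V$ and using the displayed isomorphism, $V$ is a direct summand of $f^{*}f_{*}V$. I will freely use that $\Ns$ is stable under arbitrary pullback and descends along representable proper surjective morphisms (Remark \ref{functoriality and descent for Nori semistable}), and that for a smooth projective curve $C$ over an algebraically closed field $\Ns(C)$ is an abelian subcategory of $\QCoh(C)$ which is closed under tensor products and under direct summands.

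For the inclusion $\supseteq$ — which needs only that $f$ is a cover — suppose $f_{*}V\in\Ns(\stX)$. Then $f^{*}f_{*}V\in\Ns(\stY)$ by pullback-stability, and since $V$ is a direct summand of $f^{*}f_{*}V$ while $\Ns(\stY)$ is closed under direct summands (a summand pulls back to a summand, and a direct summand of a semistable degree-$0$ bundle on a curve, being at once a sub- and a quotient-bundle, is again semistable of degree $0$), we conclude $V\in\Ns(\stY)$.

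For the inclusion $\subseteq$ suppose $V\in\Ns(\stY)$. First, $f_{*}\odi\stY\in\Ns(\stX)$. Indeed $f_{*}\odi\stY$ is essentially finite in $\Vect(\stX)$ by hypothesis, and on a pseudo-proper inflexible stack every essentially finite bundle is Nori semistable: a finite bundle is Nori semistable, since pulling back an isomorphism $p(E)\simeq q(E)$ along any $i\colon C\arr\stX$ exhibits $i^{*}E$ as a finite bundle, hence semistable of degree $0$ (a classical fact on curves); and by Remark \ref{pullback from finite stack} an essentially finite $V$ on $\stX$ fits into an exact sequence $0\arr V\arr E^{a}\arr E^{b}\arr E'\arr 0$ in $\Coh(\stX)$ with $E$ finite and $E'$ locally free, whence the image of $E^{a}\arr E^{b}$ is a locally free (in fact locally split) subsheaf of $E^{b}$, so the relevant sequences stay exact after $i^{*}$ and $i^{*}V=\Ker(i^{*}E^{a}\arr i^{*}E^{b})$ in $\QCoh(C)$; this lies in $\Ns(C)$ because $\Ns(C)$ is an abelian subcategory of $\QCoh(C)$ containing the semistable degree-$0$ bundles $i^{*}E^{a},i^{*}E^{b}$. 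Consequently $f^{*}(f_{*}\odi\stY)\in\Ns(\stY)$, and therefore $f^{*}f_{*}V\simeq V\otimes_{\odi\stY}f^{*}(f_{*}\odi\stY)\in\Ns(\stY)$, using that $\Ns(\stY)$ is closed under tensor products. Finally $f$ is representable (affine), proper (finite) and surjective, so Remark \ref{functoriality and descent for Nori semistable} descends Nori semistability and gives $f_{*}V\in\Ns(\stX)$. Observe that this argument uses neither a hypothesis on $\car k$ nor finiteness of $\Hl^{1}$.

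The one step requiring genuine input is the claim $f_{*}\odi\stY\in\Ns(\stX)$, i.e. that essential finiteness of $f_{*}\odi\stY$ forces its Nori semistability; this is exactly where pseudo-properness and inflexibility of $\stX$ are used (through Theorem \ref{monodromy essentially finite} and Remark \ref{pullback from finite stack}), and it ultimately rests on the classical statement that a finite vector bundle on a smooth projective curve over an algebraically closed field is semistable of degree zero. All remaining ingredients — the projection formula, the splitting of the counit, and the pullback, descent and tensor properties of $\Ns$ — are routine or already recorded above.
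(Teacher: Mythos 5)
Your proof is correct, but it follows a genuinely different route from the paper's. The paper reduces to the case where the base is a smooth projective curve $C$, uses the monodromy gerbe of $f_*\odi\stY$ to pass to a cover of $C$ over which $\stY$ becomes $C\times \Spec A$ with $A$ a local Artinian $k$-algebra, and then exploits the Jordan--H\"older-type filtration of Lemma \ref{decomposition series lemma}, whose graded pieces are all isomorphic to $i^*F$; the conclusion follows from the determinant computation $\det(f_*F)\simeq(\det i^*F)^N$ together with the elementary slope inequalities and the extension-closedness of semistable bundles of a fixed slope. You instead isolate the two inclusions around the splitting $V\mid f^*f_*V$ and the identification $f^*f_*V\simeq V\otimes f^*f_*\odi\stY$. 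This buys a noticeably softer ``$\supseteq$'' direction, valid for an arbitrary cover (essential finiteness is not used there), and a ``$\subseteq$'' direction in which the hypothesis on $f$ enters only through the single assertion $f_*\odi\stY\in\Ns(\stX)$, correctly justified via Theorem \ref{monodromy essentially finite} and Remark \ref{pullback from finite stack} (on the finite monodromy gerbe all coherent sheaves are vector bundles, so the four-term sequence is universally exact and $i^*$ of an essentially finite bundle is a kernel of a map of finite, hence semistable degree-$0$, bundles on the test curve). The price is that your argument leans on the closure of $\Ns$ under tensor products, i.e.\ ultimately on the Ramanan--Ramanathan theorem for strongly semistable bundles of degree $0$ in positive characteristic --- a deeper classical input than anything the paper's proof needs here, although it is one the paper already takes for granted when it asserts in Remark \ref{when X has an S-fundamental gerbe} that $\Ns(\stX)$ is a rigid monoidal category. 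The remaining ingredients you invoke (pullback stability, closure under direct summands, and descent along representable finite surjective morphisms via Remark \ref{functoriality and descent for Nori semistable}) are all sound, and $f$ is indeed representable, proper and surjective in the relevant non-degenerate case since $f_*\odi\stY$ has constant positive rank, being pulled back from the connected gerbe $\Pi^\NN_\stX$.
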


\begin{proof}
Given $F\in \Vect(\stY)$ we have to prove that $$F\in \Ns(\stY) \iff f_*F\in \Ns(\stX)\, .$$
Nori semistability is tested on curves. Thus we can assume that $\stX=C$ is a smooth, integral,
projective curve over an algebraically closed field $k$: for ``$\Longrightarrow$'' we know that
$F\in \Ns(\stY)$ and we must prove that $f_*F\in \Ns(C)$; for ``$\Longleftarrow$'' we have a
section $C\arrdi i \stY$, we know that $f_*F\in \Ns(C)$ and we must prove that $i^*F\in \Ns(C)$.
Here we are using the following: since $\stX$ is pseudo-proper and inflexible, the pullback
of $f_*\odi\stX$ along the curve $C\arr \stX$ is still essentially finite;
see Remark \ref{pullback from finite stack}.

Let $C\arr \Gamma$ be the monodromy gerbe of $f_*\odi\stY$ in $\EF(\Vect(C))$ and $\Delta\arr 
\Gamma$ the extension given in Lemma \ref{trivial section iff gerbe}. We have $\Gamma=\Bi G$ for some 
finite group scheme $G$ so that the map $C\arr \Bi G$ is given by a $G$-torsor $\stQ\arr C$. Let 
$D$ be the normalization of an irreducible component of $\stQ$ surjecting onto $C$. It follows 
that $g\colon D\arr C$ is a surjective cover. Since a vector bundle on $C$ is Nori semistable if 
and only if its pullback via $g$ is so (see Remark
\ref{functoriality and descent for Nori semistable}), 
we can replace $C$ by $D$, that is assume that $C\arr \Bi G$ factors through $\Spec k$. Since 
the cover $\stY\arr C$ extends to $\Bi G$ we know that $f\colon \stY=C\times A \arr C$ is the 
projection, where $A/k$ is a finite $k$-algebra. Splitting $\stY$ according to a decomposition 
of $A$ we can moreover assume $A$ local. In the case ``$\Longleftarrow$'' the inclusion $i\colon 
C\arr \stY=C\times A$ is induced by $A\arr k$. This map is also defined in the case 
``$\Longrightarrow$'' and we denote it with the same symbol $i$.

We may replace $C$ by another test curve. Hence it is enough to prove that $f_*F$ is
semistable of degree $0$ if and only if $i^*F$ is so. From Lemma
\ref{decomposition series lemma} we obtain a sequence of surjective homomorphisms of vector bundles
$$
f_*F=\shG_N \arr \shG_{N-1} \arr \cdots \arr \shG_1 \arr \shG_0=0
$$
such that $\Ker(\shG_l \arr \shG_{l-1})\simeq i^*F$. 
By induction we have
\[
\det(f_*F)\simeq (\det i^*F)^N
\]
so that $f_*F$ has degree $0$ if and only if $i^*F$ has degree $0$. Again by induction we also see
that all $\shG_l$ have the same slope as that of $i^*F$. In particular
if $f_*F$ is semistable so is $i^*F\subseteq f_*F$. The
converse is deduced from the following fact: if $0\arr E'\arr E \arr E'' \arr 0$ is an exact sequence of vector bundles on $C$ with equal slope then $E$ is semistable if $E'$ and $E''$ are semistable. 
\end{proof}

\begin{proof}[Proof of Theorem \ref{main thm for the S-gerbe}]
 The first claim follows from Remark \ref{NSX Tannakian implies inflexible}. In particular by Theorem \ref{main thm Nori gerbe of ess finite cover} we have $2$-Cartesian diagrams
   \[
  \begin{tikzpicture}[xscale=1.8,yscale=-1.1]
    \node (A0_0) at (0, 0) {$\stY$};
    \node (A0_1) at (1, 0) {$\Pi$};
    \node (A0_2) at (2, 0) {$\Pi^\NN_\stY$};
    \node (A1_0) at (0, 1) {$\stX$};
    \node (A1_1) at (1, 1) {$\Pi^\SS_\stX$};
    \node (A1_2) at (2, 1) {$\Pi^\NN_\stX$};
    \path (A0_0) edge [->]node [auto] {$\scriptstyle{v}$} (A0_1);
    \path (A0_1) edge [->]node [auto] {$\scriptstyle{}$} (A0_2);
    \path (A1_0) edge [->]node [auto] {$\scriptstyle{u}$} (A1_1);
    \path (A0_2) edge [->]node [auto] {$\scriptstyle{}$} (A1_2);
    \path (A1_1) edge [->>]node [auto] {$\scriptstyle{}$} (A1_2);
    \path (A0_0) edge [->]node [auto] {$\scriptstyle{}$} (A1_0);
    \path (A0_1) edge [->]node [auto] {$\scriptstyle{}$} (A1_1);
  \end{tikzpicture}
  \]
Since $\Pi^\SS_\stX\arr \Pi^\NN_\stX$ is a quotient it follows that $\Pi$ is a gerbe. As $\Ns(\stX)$
is a full subcategory of $\Vect(\stX)$, by Lemma \ref{fully faithful on Vect means pushforwars O is O},
Lemma \ref{vector bundles of an extension} and Lemma \ref{push of Nori semistable are Nori semistable}
we have that $\Vect(\Pi)\arr \Vect(\stY)$ is fully faithful with essential image
$\Ns(\stY)$. Thus we have $\Pi=\Pi^\SS_\stY$. The last claim follows from the Cartesian diagram
and Lemma \ref{key product gerbes}.
\end{proof}

\section{Counterexamples}

In this section we collect various examples. We start by showing that, under the assumption of 
Theorem \ref{main thm Nori gerbe of ess finite cover}, the condition $\Hl^0(\odi\stY)=k$ in 
general does not imply that $\stY$ is inflexible, even when $f_*\odi \stY$ has \'etale monodromy 
gerbe (e.g. if $\car k = 0$).

\begin{ex}\label{counterexample connected cover non inflexible}
 Let $k$ be an algebraically closed field. We show an example of an elliptic curve $X$ over $k$ with an essentially finite cover $f\colon Y \arr X$ of degree $2$ such that $\Hl^0(\odi\stY)=k$ but $Y$ is not inflexible. Clearly here $Y$ is not reduced. The monodromy gerbe of $f_*\odi Y$ is $\Bi \mu_2$.
 
 Let $X$ be an elliptic curve together with a non trivial line bundle $\shL$ such that
$\shL^{\otimes 2}\simeq \odi X$. This is the data of a Nori-reduced map $u\colon X\arr \Bi \mu_2$. Let
$A=k[\epsilon]=k[x]/(x^2)$ equipped with the $\mu_2=\Spec ( k[y]/(y^2-1))$ action
 \[
 A\arr A\otimes (k[y]/(y^2-1))\comma \epsilon \longmapsto \epsilon \otimes y
 \]
 and set $\Delta=[\Spec A/\mu_2]$. Define $Y\arr X$ with the $2$-Cartesian diagram
   \[
  \begin{tikzpicture}[xscale=1.8,yscale=-1.2]
    \node (A0_0) at (0, 0) {$Y$};
    \node (A0_1) at (1, 0) {$\Delta$};
    \node (A1_0) at (0, 1) {$X$};
    \node (A1_1) at (1, 1) {$\Bi \mu_2$};
    \path (A0_0) edge [->]node [auto] {$\scriptstyle{v}$} (A0_1);
    \path (A1_0) edge [->]node [auto] {$\scriptstyle{u}$} (A1_1);
    \path (A0_1) edge [->]node [auto] {$\scriptstyle{}$} (A1_1);
    \path (A0_0) edge [->]node [auto] {$\scriptstyle{f}$} (A1_0);
  \end{tikzpicture}
  \]
Since $u$ is Nori-reduced we have $u_*\odi X\simeq \odi{\Bi\mu_2}$ and $v_*\odi Y\simeq\odi\Delta$ by flat base change. In particular 
$$
\Hl^0(\odi Y)=\Hl^0(\odi\Delta)=A^{\mu_2}=k
$$
where the last equality follows from a direct computation. On the other hand, $Y$ is not inflexible
because $v_*\odi Y\simeq\odi\Delta$; this implies that $v\colon  Y\arr \Delta$ does not factor through a gerbe.
\end{ex}

We now show examples of towers without a Galois envelope. The lemma below will be our
method to exclude that a given tower has a Galois envelope.

A torsor under a finite group scheme $G$ over $k$ is called \emph{minimal} if it does not come from a torsor under a proper subgroup of $G$. For instance Nori-reduced torsors are minimal.

\begin{lem}\label{Galois closure and gerbes}
Let $\stX$ be a fibered category, and let $\stZ\arr \stY\arr \stX$ be a
$(G,H)$-tower of torsors with $\stY\arr \stX$ minimal. If the tower has a Galois envelope
with group $\shG$ then the map $\stX\arr \Bi(G,H)$ factors through a map $\Bi \shG \arr \Bi(G,H)$.
\end{lem}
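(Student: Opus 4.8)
### Proof plan for Lemma \ref{Galois closure and gerbes}

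The plan is to unwind the definition of a Galois envelope and use the universal property of the moduli stack $\Bi(G,H)$ from Remark \ref{universal tower of BGH}, together with the normal-subgroup picture for torsors provided by Lemma \ref{morphisms of torsors}. Suppose we are given a Galois envelope: a finite group scheme $\shG$ with homomorphisms $\alpha\colon\shG\arr G$ and $\Ker(\alpha)\arr H$, a $\shG$-torsor $\stP\arr\stX$, and a factorization $\stP\arr\stZ$ over $\stX$ such that $\stP\arr\stY$ is $\shG$-equivariant (via $\alpha$) and $\stP\arr\stZ$ is $\Ker(\alpha)$-equivariant. The goal is to produce a map $\Bi\shG\arr\Bi(G,H)$ through which $\stX\arr\Bi(G,H)$ factors. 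By Remark \ref{universal tower of BGH}, giving a map $\Bi\shG\arr\Bi(G,H)$ is the same as giving a $(G,H)$-tower over the stack $\Bi\shG$; and giving a factorization $\stX\arr\Bi\shG\arr\Bi(G,H)$ amounts to checking that the pullback of that universal tower along $\stX\arr\Bi\shG$ recovers the original tower $\stZ\arr\stY\arr\stX$.

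So the first step is to construct a $(G,H)$-tower over $\Bi\shG$. The homomorphism $\alpha\colon\shG\arr G$ induces a map $\Bi\shG\arr\Bi G$, i.e. a canonical $G$-torsor $\stW_1\arr\Bi\shG$, explicitly $\stW_1=[\shG/\Ker(\alpha)]$ with its residual $G$-action — equivalently, $\stW_1$ is the $G$-torsor obtained from the universal $\shG$-torsor $\Spec k\arr\Bi\shG$ by extension of structure group along $\alpha$. Next, over $\stW_1$ I need an $H$-torsor. Since $\Ker(\alpha)\arr H$ is a homomorphism and $\stW_1=\Bi\Ker(\alpha)$ (as a stack over $k$, being the classifying stack of the kernel, with a $G$-action encoding its structure over $\Bi\shG$), the map $\Ker(\alpha)\arr H$ gives $\Bi\Ker(\alpha)\arr\Bi H$, i.e. an $H$-torsor $\stW_2\arr\stW_1$. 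The compatibility needed to make $\stW_2\arr\stW_1\arr\Bi\shG$ a genuine $(G,H)$-tower in the sense of Definition \ref{the stack B(G,H)} — that $\stW_2\arr\stW_1$ be suitably $G$-equivariant — is exactly the statement that $\Ker(\alpha)\arr H$ is a homomorphism of group schemes together with the fact that $\Ker(\alpha)$ is normal in $\shG$; this is a routine check at the level of classifying stacks, and can be organized using Lemma \ref{key product gerbes} to identify the relevant $2$-fiber products. This produces the desired map $\mu\colon\Bi\shG\arr\Bi(G,H)$.

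The second step is to produce the factorization. The $\shG$-torsor $\stP\arr\stX$ corresponds to a map $\psi\colon\stX\arr\Bi\shG$. I claim $\stX\arrdi\psi\Bi\shG\arrdi\mu\Bi(G,H)$ is (2-isomorphic to) the map $\omega\colon\stX\arr\Bi(G,H)$ classifying the original tower. Indeed, pulling back $\stW_1\arr\Bi\shG$ along $\psi$ gives the $G$-torsor obtained from $\stP$ by extension along $\alpha$; by the equivariance hypothesis on $\stP\arr\stY$ this is canonically $\stY\arr\stX$ — this is where one uses that $\stP\arr\stY$ is $\shG$-equivariant via $\alpha$ and that $\stP$ is a $\shG$-torsor while $\stY$ is a $G$-torsor, so that $\stY$ is forced to be $\stP\times^{\shG}G$. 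Similarly, pulling back $\stW_2\arr\stW_1$ gives the $H$-torsor over $\stY$ obtained from $\stP\arr\stY$ (which is a $\Ker(\alpha)$-torsor, once we know $\stP\arr\stY$ is faithfully flat) by extension along $\Ker(\alpha)\arr H$, and the hypothesis that $\stP\arr\stZ$ is $\Ker(\alpha)$-equivariant together with the factorization $\stP\arr\stZ\arr\stY$ identifies this with $\stZ\arr\stY$. Assembling the two identifications gives the required 2-isomorphism of towers, hence the factorization $\omega\simeq\mu\circ\psi$.

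The main obstacle, and the point that needs the most care, is the equivariance bookkeeping: checking that the $H$-torsor $\stW_2\arr\stW_1$ constructed from $\Ker(\alpha)\arr H$ genuinely fits the definition of a $(G,H)$-tower over $\Bi\shG$ (the condition in Definition \ref{the stack B(G,H)} that the comparison to the pullback along $\Bi\shG\arr\Bi G$ be $H$-equivariant), and then matching this, after pullback along $\psi$, with the given tower. Concretely one must verify that $\stP\arr\stY$ is indeed an honest $\Ker(\alpha)$-torsor — this follows from Lemma \ref{morphisms of torsors} applied to $\stP\arr\stY\arr\stX$, since $\stP\arr\stX$ is a $\shG$-torsor, $\stY\arr\stX$ a $G$-torsor, and the induced homomorphism $\shG\arr G$ is $\alpha$, whose kernel is $\Ker(\alpha)$; faithful flatness of $\stP\arr\stY$ is automatic because $\shG\arr G$ need not be surjective a priori, but $\stP\arr\stZ\arr\stY$ being a composite ending in a faithfully flat map forces it. Once these identifications are set up, the factorization statement is formal, being nothing but the universal property of $\Bi(G,H)$ recorded in Remark \ref{universal tower of BGH}.
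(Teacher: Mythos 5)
Your overall strategy (build a $(G,H)$-tower over $\Bi\shG$ from $\alpha$ and $\Ker(\alpha)\arr H$, then check that its pullback along the classifying map of $\stP$ recovers the given tower) is the same as the paper's. But there is a genuine gap: you never use the hypothesis that $\stY\arr\stX$ is minimal, and that hypothesis is exactly what makes your construction work. Your identification $\stW_1=[\shG/\Ker(\alpha)]=\Bi\Ker(\alpha)$ --- equivalently, the identification of the extension of structure group $[G/\shG]$ of the universal $\shG$-torsor with $\Bi\Ker(\alpha)$ --- holds if and only if $\alpha\colon\shG\arr G$ is surjective; without surjectivity $[G/\shG]$ is strictly larger than $\Bi\Ker(\alpha)$, and the homomorphism $\Ker(\alpha)\arr H$ does not by itself produce an $H$-torsor over it, so the second layer of your tower over $\Bi\shG$ is simply not defined. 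The paper extracts surjectivity from minimality: since $\stP\arr\stY$ is $\shG$-equivariant, the induced map $\stP\times^{\shG}G\arr\stY$ is a morphism of $G$-torsors over $\stX$, hence an isomorphism, so if $\alpha$ had image a proper subgroup $G'<G$ then $\stY$ would be induced from the $G'$-torsor $\stP\times^{\shG}G'$, contradicting minimality.

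Your substitute argument fails on two counts. First, the claim that faithful flatness of $\stP\arr\stY$ is ``forced'' because $\stP\arr\stZ\arr\stY$ is a composite ending in a faithfully flat map is a non sequitur: the definition of a Galois envelope imposes no flatness or surjectivity on $\stP\arr\stZ$ (compare Theorem \ref{closure of an essentially finite cover}(3), where the analogous map factors through a closed immersion), and a composite whose second factor is faithfully flat need not be faithfully flat. Second, Lemma \ref{morphisms of torsors} is not applicable as stated, since it requires $\Hl^0(\odi\stP)=k$, which is unavailable for a bare fibered category $\stX$; in any case what you want from it --- that $\stP\arr\stY$ is a $\Ker(\alpha)$-torsor --- is again a consequence of the surjectivity of $\alpha$, i.e.\ of minimality. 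Once surjectivity is established, the remainder of your argument goes through and coincides with the paper's proof.
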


\begin{proof}
Let $\stP\arr \stX$ and $\stP\arr \stZ$ be the Galois envelope. We must show that there is a tower
over $\Bi\shG$ whose pullback along $\stX\arr \Bi\shG$ is the original tower. Since $\stY\arr \stX$
is minimal the map $\shG\arr G$ is surjective. By Lemma \ref{key product gerbes} the $G$-torsor over
$\Bi\shG$ induced by $\shG\arr G$ is $\Bi \shH \arr \Bi\shG$, where $\shH$ is the kernel of
$\shG\arr G$. The pullback of the $G$-torsor $\Bi\shH\arr\Bi\shG$ along $\stX\arr \Bi\shG$ is, by
construction, $\stY\arr \stX$. The map $\stY\arr \Bi \shH$ is given by the $\shH$-torsor
$\stP\arr \stY$. The homomorphism $\shH\arr H$ gives a map $\Bi\shH \arr \Bi H$ and therefore an $H$-torsor
$\stB\arr \Bi \shH$. The fact that $\stP\arr \stZ$ is $\shH$-equivariant means that
$\stZ\arr \stY$ is the $H$-torsor induced by the $\shH$-torsor $\stP\arr \stY$ along $\shH\arr H$.
This exactly means that the $H$-torsor $\stB\arr \Bi\shH$ pulls back to $\stZ\arr \stY$ along
$\stY\arr \Bi\shH$.
 \end{proof}
 
This example shows that the condition on the cohomology groups $\Hl^1$ in Theorem \ref{main theorem for Galois closure} is necessary.
 \begin{ex}\label{counterexample 1}
 Assume that $G$ and $H$ are not \'etale.
  We give an example of a pseudo-proper, inflexible and smooth algebraic stacks $\stX$ with a pointed $(G,H)$-tower of Nori-reduced torsors $\stZ\arr \stY \arr \stX$ without a Galois envelope. In particular $\stZ\arr \stX$ cannot be an essentially finite cover by
Theorem \ref{Galois closure for essentially finite towers}.
  
Using notations of Lemma \ref{automorphisms of the trivial tower} set $\stX=\Bi Q$ with the tower
$\Bi W'\arr \Bi W_G(H) \arr \Bi Q$. If this tower has a Galois envelope then by 
Lemma \ref{Galois closure and gerbes} the map $\Bi Q\arr \Bi(G,H)$ factors through a finite gerbe. Since the map $\Bi Q\arr \Bi(G,H)$ is fully faithful, this means that $Q$ has to be a finite group scheme. Thus we must show that $Q$ is not a finite group scheme. In particular we can
assume $k$ to be algebraically closed, so that $G$ is a disjoint union of copies of the connected component $G_0$.
In particular $W_G(H)=(W_{G_0}(H))^{\#G_\et}$, so that we can assume $G$ local but not trivial.
Moreover there is an injective map $W_G(H_0)\arr W_G(H)$, where $H_0$ is the connected component of $H$. 
Thus we can also assume that $H$ is local but not trivial. If $k[\epsilon]=k[x]/(x^2)$ there is a map
$k[\epsilon]\subseteq k[G]$. Thus one gets an injective map of group schemes $W_U(H)\arr W_G(H)$ where $U=\Spec k[\epsilon]$. Similarly one can find a closed embedding $U\arr H$, which yields a monomorphism of schemes $W_U(U)\arr W_U(H)$. Moreover there is a monomorphism
  \[
  \A^1(B)\arr W_U(U)(B)=\Hom_{B\text{-algebras}}(B[\epsilon],B[\epsilon])\comma b \longmapsto (\epsilon \mapsto b\epsilon)
  \]
  In conclusion we find a monomorphism $\phi\colon \A^1\arr W_G(H)$. If $W_G(H)$ is finite, the image
of $\phi$ must be connected, reduced, finite and with a rational point, that is $\Spec k$, so that
$\phi$ is not a monomorphism. Therefore, $W_G(H)$ is not finite.
 \end{ex}

The next example shows the importance of the pseudo-properness assumption on $\stX$ in 
Theorem \ref{main theorem for Galois closure}.

\begin{ex}\label{counterexample 2}
  We give an example of a smooth, integral and affine scheme $X$ over $k$ with a pointed $(G,H)$-tower of Nori-reduced torsors without a Galois envelope.
  
  Assume that $k$ is an algebraically closed field of characteristic $2$ and let $H=\mu_2$ and $G$ be either $\mu_2$ or $\alpha_2$. Recall that if $B$ is a $k$-algebra and $b\in B^*$ then $B[x]/(x^2-b)$ has an action by $\mu_2$ and an action by $\alpha_2$ and it is a torsor over $B$ for both actions. Since $k$ is algebraically closed and $\alpha_2$ and $\mu_2$ are simple we have that $B[x]/(x^2-b)$ is Nori-reduced if $b$ is not a square in $B$ and it is trivial otherwise.
  
  Let $K$ be the separable closure of the field of fractions $k(t)$ and consider 
  \[
  K_1=K[x]/(x^2-t) \text{ and } K_2=K[x,y]/(x^2-t,y^2-x)
  \]
  The rings $K_1\subseteq K_2$ are fields. Thus $\Spec K_2\arr \Spec K_1 \arr \Spec K$ is a $(G,H)$-tower of non trivial torsors which defines a map $\xi\colon \Spec K\arr \Bi(G,H)$.
  Consider a smooth map $X\arr \Bi(G,H)$ from a connected affine scheme and whose image contains the point $\xi$. We claim that the corresponding tower is pointed and Nori-reduced. It is pointed because $k$ is algebraically closed. It is Nori-reduced because, since $K$ is separably closed, the map $\xi$ factors through a map $\Spec K\arr X$ and the torsors in the tower $\xi$ are not trivial.
  
  Let $\alpha\colon X\arr \Bi(G,H)$ and $\beta\colon Y\arr \Bi(G,H)$ be smooth maps from connected affine schemes and assume that their images contain $\xi$ and the trivial tower respectively. If the tower $\alpha$ does not have a Galois envelope we have our counter-example. Otherwise, by \ref{Galois closure and gerbes}, the map is (topologically) constant and $\xi$ is an open point in $\Bi(G,H)$. If $\Bi(G,H)$ is irreducible then $\xi$ would be its generic point and, since the image of $\beta$ is open, it would contain $\xi$. In this case if the tower $\beta$ has a Galois envelope it would follow that $\xi$ is the trivial tower, that is $x\in K_1$ become a square extending the field $K$, which is not true.

  Thus it is enough to show that $\Bi(G,H)$ is a smooth and connected algebraic stack.   
  Consider the tower $\stW\arr \Homsh_k(G,\Bi H)\arr \Bi(G,H)$ described in \ref{universal tower of BGH}.
  It is enough to show that $\stW$ is a smooth connected algebraic stack. The objects of $\stW(B)$ are $H$-torsors over
  $G\times B$ with a trivialization over $\Spec B\arrdi 1 G\times B$. We think about $\mu_2$-torsors as line bundles
  with an isomorphism between its square and the trivial bundle. Set also $B[\epsilon]=B[x]/(x^2)=B[G]$. Thus $\stW(B)$ 
  is the stack of triples $(L,\phi,\psi)$ where $L$ is a $B[\epsilon]$-line bundle, $\phi\colon L^2\arr B[\epsilon]$ is 
  an isomorphism and $\psi\colon L/\epsilon L \arr B$ is an isomorphism such that $\psi^{\otimes 2} \equiv \phi$ modulo 
  $\epsilon$. If $L=B[\epsilon]$ then $\phi=a+b\epsilon \in B[\epsilon]^*$ and $\psi=c\in B^*$ with $a=c^2$ and,
  up to an isomorphism in $\stW(B)$, one can always assume $c=1$. Thus the map $\A^1\arr \stW$, mapping $b\in \A^1(B)$ 
  to $(B[\epsilon],1+b\epsilon,1)$, is an epimorphism in the Zariski topology. In particular $\A^1\arr \stW$ is an fppf 
  covering if  $\A^1 \times_\stW \A^1 \arr \A^1$ is. A direct computation shows that an isomorphism 
  $(B[\epsilon],1+b\epsilon,1)\arr (B[\epsilon],1+c\epsilon,1)$ exists if and only if $b=c$ and in this case it is the
  multiplication by $1+\lambda\epsilon$ for $\lambda\in B$. This means that $\A^1 \times_\stW \A^1 \arr \A^1$ coincides
  with the projection $\pr_1\colon\A^1\times_k \A^1\arr \A^1$ which is an fppf covering.
 \end{ex}

We conclude the section with an example showing that Corollary \ref{main thm Nori 
gerbe pointed case} (and thus Theorem \ref{main thm Nori gerbe of ess finite 
cover}) as well as Lemma \ref{push of ess finite is still ess finite} fails without 
the finiteness assumption on the first cohomology group of vector bundles.

\begin{ex}\label{counterexample no H1 exact sequence}
We give an example of a Nori-reduced torsor $f\colon \stY\arr \stX$ between 
pseudo-proper, inflexible and smooth algebraic stacks over $k$ with $y\in \stY(k)$ 
such that the following hold:
\begin{itemize}
\item $f_*$ does not map essentially finite vector bundles to essentially 
finite vector bundles,

\item $\pi^\NN(\stX,f(x))$ is finite,

\item $f\colon \stY\arr \stX$ is 
the universal torsor, but 

\item $\pi^\NN(\stY,y)$ is not trivial.
\end{itemize} 

Let $k$ be a field of characteristic $2$, $H=\mu_2$, and let $G$ be either $\mu_2$ or 
$\alpha_2$. Consider the $(G,H)$-tower of pointed Nori-reduced torsors 
$$\stZ=\Bi W'\arrdi h \stY=\Bi W_G(\mu_2)\arrdi f \stX=\Bi Q$$ introduced in Example
\ref{counterexample 1}. Since $\stZ\arr \stX$ is not essentially finite it follows 
that $h_*\odi\stZ$ is essentially finite while $f_*(h_*\odi\stZ)$ is not.
 
Consider $y$ and $x=f(y)$ as the trivial torsors. Since the Nori fundamental group of
an affine gerbe $\Bi S$ pointed at the trivial torsor is the profinite quotient $\widehat S$ of $S$, we must show that $\widehat Q=G$ and $\widehat{W_G(\mu_2)}=\mu_2$. Given a $k$-algebra $B$ set $B[\epsilon]=B[x]/(x^2)$. We have
 \[
 W_G(\mu_2)(B)=\mu_2(B[\epsilon])=\{a+b\epsilon \st a^2=1\}
 \]
 from which it is easy to conclude that $W_G(\mu_2)=\G_a \times \mu_2$. Since any
homomorphism from $\G_a$ to a profinite group is trivial we conclude that
$\widehat{W_G(\mu_2)}=\mu_2$. Similarly, denoting by $K$ the kernel of
$Q\arr \widehat Q$, we have $\G_a\subseteq K \subseteq W_G(\mu_2)$. Since $\mu_2$ is
simple we just have to check that $\G_a$ is not normal in $Q$. Note that
$G=\Spec k[\epsilon]$; for $g\in G(B)$, let $\psi_g\colon B[\epsilon]\arr
B[\epsilon]$ be the multiplication by $g$. Then,
 \[
 g\star x \star g^{-1} = \psi_g(x) \text{ for } x\in \mu_2(B[\epsilon])=W_G(\mu_2)(B)
 \]
 where $\star$ denotes the multiplication in $Q$. If $G=\mu_2$, so that $g\in B^*$ with $g^2=1$, an easy computation shows that $\psi_g(\epsilon)=g\epsilon + (g-1)$. Thus
 $$
 \psi_g(1+\epsilon)=g + g\epsilon 
 $$
 is in $\G_a$ if and only if $g=1$, which is not always the case.
 
 If $G=\alpha_2$, so that $g\in B$ with $g^2=0$, then $\psi_g(\epsilon)=\epsilon + g$. Thus
 \[
 \psi_g(1+\epsilon)=1+g + \epsilon 
 \]
 is in $\G_a$ if and only if $g=0$, which is not always the case.
\end{ex}

\end{document}